\documentclass{amsart}

\usepackage{amssymb,epsfig,graphics}

\usepackage{color}

\newcommand{\abs}[1]{\vert#1\vert}
\newcommand{\dz}{\partial_z}

\newcommand{\curl}{\mbox{\textnormal curl }}
\newcommand{\cst}{\mbox{\textnormal{Cst}}}
\newcommand{\dt}{\partial_t}
\newcommand{\dta}{\partial_{\tau}}
\newcommand{\bU}{{\mathbf U}}

\newcommand{\bW}{{\mathbf W}}

\newcommand{\bZ}{{\mathbf Z}}
\newcommand{\eps}{\varepsilon}
\newcommand{\dsp}{\displaystyle}
\newcommand{\init}{{\vert_{t=0}}}
\newcommand{\R}{\mathbb R}
\newcommand{\T}{\mathbb T}
\newcommand{\Z}{\mathbb Z}
\newcommand{\N}{\mathbb N}
\newcommand{\C}{\mathbb C}
\newcommand{\HH}{\mathcal{H}}
\newcommand{\bk}{{\mathbf k}}
\newcommand{\cc}{\mbox{\textnormal{c.c.}}}
\newcommand{\CL}{{\mathcal C}_{{\mathcal L}}}
\newcommand{\cL}{{\mathcal L}}

\newcommand{\cLk}{{\mathcal L}(\omega,\bk)}

\newcommand{\ue}{{u_{env}}}

\newcommand{\Id}{\mbox{\textnormal{Id}}}

\newcommand{\cg}{{\bf c}_g}
\newcommand{\ttE}{{\mathtt E}}
\newcommand{\ttB}{{\mathtt B}}
\newcommand{\ttQ}{{\mathtt Q}}
\newcommand{\ttP}{{\mathtt P}}
\newcommand{\ttJ}{{\mathtt J}}
\newcommand{\tte}{{\mathtt e}}
\newcommand{\ttb}{{\mathtt b}}
\newcommand{\ttq}{{\mathtt q}}
\newcommand{\ttp}{{\mathtt p}}
\newcommand{\fre}{{\mathfrak e}}

\newcommand{\tr}{\tilde{\rho}}
\newcommand{\at}{\makeatletter @\makeatother}
\newtheorem{assumption}{Assumption}[section]
\newtheorem{theorem}{Theorem}[section]
\newtheorem{openproblem}{Open Problem}
\newtheorem*{acknowledgements}{Acknowledgements}

\newtheorem{coro}{Corollary}[section]
\theoremstyle{remark}
\newtheorem{remark}{Remark}[section]
\newtheorem{example}{Example}[section]
\newtheorem{notation}{Notation}[section]

\title[Variant of the focusing NLS equation]{Variants of the
  focusing NLS equation. Derivation, justification and open problems
  related to filamentation}

\thanks{D. L. acknowledges support from the ANR-13-BS01-0003-01
  DYFICOLTI, the ANR BOND.}

\author{\'Eric Dumas, David Lannes and J\'er\'emie Szeftel}

\address{{\rm \'Eric Dumas:} Institut Fourier - 
Universit\'e Grenoble 1 - 100, rue des math\'ematiques - 
BP 74 - 38402 Saint Martin d'H\`eres - France. {\it Email address: }
{\rm Eric.Dumas\at ujf-grenoble.fr}}

\address{{\rm David Lannes:} DMA - \'Ecole Normale Sup\'erieure - 
45, rue d'Ulm - 75005 Paris - France. {\it Email address: } 
{\rm David.Lannes\at ens.fr}}

\address{{\rm J\'er\'emie Szeftel:} Laboratoire Jacques-Louis Lions - 
Universit\'e Pierre et Marie Curie - 
4, place Jussieu - 75005 Paris - France. 
{\it Email address: }{\rm Jeremie.Szeftel\at upmc.fr}}


\begin{document}

\begin{abstract}
The focusing cubic NLS is a canonical model for the propagation 
of laser beams. In dimensions 2 and 3, it is known that a 
large class of initial data leads to finite time blow-up. 
Now, physical experiments suggest that this blow-up does not 
always occur.  This might be explained by the fact that 
some physical phenomena neglected by the standard
NLS model become relevant at large intensities of the beam. 
Many ad hoc variants of the focusing NLS equation have been 
proposed to capture such effects. 
In this paper, we derive some of these variants 
from Maxwell's equations and propose some new ones. 
We also provide rigorous error estimates for all the models 
considered. 
Finally, we discuss some open problems related to these 
modified NLS equations. 
\end{abstract}

\maketitle

\section{Introduction}

The cubic, focusing, nonlinear Schr\"odinger equation 
in space dimension $d$ is given by
\begin{equation}\label{eqNLS}
\left\{\begin{array}{l} i\dta v+\Delta v+\abs{v}^2 v=0, \quad 
\tau> 0, \quad x\in \R^d, \vspace{1mm} \\
v(0,x)=v_0(x),\,x\in \R^d.
\end{array}\right.
\end{equation}
It is a canonical model for the propagation of laser beams.

 From a result of Ginibre and Velo \cite{GV}, equation (\ref{eqNLS}) is locally well-posed in $H^1=H^1(\R^d)$ for $d=1, 2, 3$, and thus, for $v_0\in H^1$, there exists $0<T\leq +\infty$ and a unique solution $v(\tau)\in {\mathcal{C}}([0,T),H^1)$ to (\ref{eqNLS}) and either $T=+\infty$, we say the solution is global, or $T<+\infty$ and then $\lim_{\tau\uparrow T}\|\nabla v(t)\|_{L^2}=+\infty$, we say the solution blows up in finite time. 

The NLS equation (\ref{eqNLS}) also admits the following (formal) 
conservation laws: 
$$
\begin{array}{lll}
L^2-\mbox{norm}: \ \ \|v(\tau)\|^2_{L^2}=\|v_0\|_{L^2}^2; \vspace{1mm} \\
\displaystyle 
\mbox{Energy}:\ \ E(v(\tau))=\frac{1}{2}\int|\nabla v(\tau,x)|^2{\rm d}x
-\frac{1}{4}\int |v(\tau,x)|^{4}{\rm d}x=E(v_0); \vspace{1mm} \\
\displaystyle 
\mbox{Momentum}:\ \ Im \left( \int\nabla v(\tau,x)\overline{v(\tau,x)}{\rm d}x \right) =
Im \left( \int\nabla v_0(x)\overline{v_0(x)}{\rm d}x \right).
\end{array}
$$
It is also known that a large group of symmetries leaves the equation invariant: if $v(\tau,x)$ solves \eqref{eqNLS}, then $\forall (\lambda_0,\tau_0,x_0,\beta_0,\gamma_0)\in \R_*^+\times\R\times\R^d\times\R^d\times \R$, so does 
\begin{equation}
\label{symmetrygroup}
u(\tau,x)=\lambda_0v(\lambda_0^2\tau+\tau_0,\lambda_0 x+x_0-\beta_0t)e^{i\frac{\beta_0}{2}\cdot(x-\frac{\beta_0}{2} \tau)}e^{i\gamma_0}.
\end{equation}
The scaling symmetry $u(\tau,x)=\lambda_0v(\lambda^2_0\tau,\lambda_0 x)$ leaves the homogeneous Sobolev space $\dot{H}^{s_c}(\R^d)$ invariant, where $s_c=\frac{d}{2}-1$. 

Referring to conservation of the $L^2$ norm by the flow, 
\eqref{eqNLS} is said to be $L^2-$subcritical if $s_c<0$, 
$L^2-$critical if $s_c=0$ and $L^2-$supercritical if $s_c>0$. 
Thus, \eqref{eqNLS} is $L^2-$subcritical if $d=1$, $L^2-$critical 
if $d=2$, and $L^2-$supercritical if $d\geq 3$. 
In the subcritical case, global existence 
(in ${\mathcal{C}}([0,\infty),L^2)$) holds for arbitrarily 
large data in $L^2$. It turns out that in this case, 
global existence (in ${\mathcal{C}}([0,\infty),H^1)$) 
also holds for arbitrarily large data in $H^1$, 
due to the conservation of mass and energy. 
In the critical and supercritical cases however, there exist stable finite time blow-up dynamics. This has been known since the 60ies using global obstructive arguments based on the virial identity (see e.g. \cite{ZS}).\footnote{Much more is known about the finite time blow up dynamics for the focusing NLS and we refer the interested reader to \cite{SS} \cite{MR1} \cite{MR2} \cite{MR3} \cite{MR4} \cite{MR5} \cite{MR6} \cite{MRS} and references therein.}

There is however a discrepancy between the blow-up results predicted by (\ref{eqNLS})
and physical observations. Indeed, while the blow-up signifies a break-down of the solution $v$,
physical observations show in many cases that lasers begin to focus according to the scenarios
associated to (\ref{eqNLS}) but depart from this behavior slightly
before the focusing time. The reason advanced by physicists is that
some physical phenomena that have been neglected to derive
(\ref{eqNLS}) become relevant at high intensities, and therefore near
focusing. This phenomenon is called {\it filamentation}: defocusing
physical phenomena are triggered at high intensities and halt the
collapse of the beam. This interplay between diffraction,
self-focusing, and defocusing mechanisms allow for the beam to
propagate along several times the focusing distance (called Rayleigh
length in optics) and the resulting
structure is called {\it filament} .

Many variants of (\ref{eqNLS}) have been derived in optics  to take
into account these additional physical phenomena and reproduce the
filamentation mechanism. In many cases, it is a mathematical open problem to prove whether 
these additional terms prevent focusing or not, and a fortiori to understand the modification of the dynamics induced by them.

Rather than adding as usual ad hoc modifications to (\ref{eqNLS}) in order to take new physical effects into
account, we choose here to rigorously derive such modifications from
Maxwell's equations.  We then comment on some of the most physically
relevant open mathematical problems that these modified equations
raise and that are natural milestones towards the understanding of filamentation. These variants can roughly be classified into two groups, depending on whether they 
 take ionization processes into account or not. 

\begin{notation}
In the brief presentation below,  we denote by $z$ the direction of propagation of the laser,
by $X_\perp=(x,y)$ the transverse variables, and by $\Delta_\perp=\partial_x^2+\partial_y^2$ the transverse Laplace operator.
In dimension $d=2$, the variable $y$ is omitted (and hence $\Delta_\perp=\partial_x^2$), while in dimension $d=1$, 
functions depend only on $z$ (so that $\Delta_\perp=0$).
\end{notation}

\medbreak

\noindent
\textsc{Models without ionization processes}\\
 We give below a family of variants to (\ref{eqNLS})
that incorporate many physical phenomena neglected by (\ref{eqNLS}). It is of course possible to look at one 
or several of these additional effects simultaneously. 
We state the equations in their most general form, starting 
with a family of \emph{scalar} NLS equations, 
and then give the corresponding \emph{vectorial} 
-- and more general -- form of these equations. 
Let us therefore consider
\begin{equation}\label{NLSfamily1}
i P_2(\eps \nabla)\dta v + (\Delta_\perp +\alpha_1\dz^2) v +i \alpha_2 v +(1+i \eps \boldsymbol{\alpha}_3\cdot \nabla )
\big[\big(1+f(\eps^{r} \abs{v}^2)\big)\abs{v}^2 v \big]=0,
\end{equation}
where $v$ is a complex-valued function. Here, $\eps>0$ 
is a (small) parameter; $P_2(\eps\nabla)$ is a second (at most) 
order, self-adjoint, positive operator; $\alpha_1=0,\pm 1$; 
$\alpha_2\geq 0$; $\boldsymbol{\alpha}_3\in \R^d$; 
$f:\R^+\to \R$ is a smooth mapping vanishing at the origin, 
and $r > 0$. 
The physical meaning of these terms is commented below:
\begin{enumerate}
\item Nonlinearity. The cubic nonlinearity in (NLS) corresponds to a first order approximation of the nonlinear optical phenomena. At high intensities, it is often worth including some next order terms captured here
by the additional term $f(\eps^r\abs{v}^2)$. We consider here three situations:
\begin{enumerate}
\item Cubic nonlinearity: $f=0$. 
\item Cubic/quintic nonlinearity: $f(r)=-r$.
\item Saturated nonlinearity: $f$ is a smooth function on $\R^+$ vanishing at the origin and such that
$(1+f(r))r$ is bounded on $\R^+$ (e.g. $f(r)=-\frac{r}{1+r}$).
\end{enumerate}
\item Group velocity dispersion (GVD). The coefficient $\alpha_1$ accounts for the dispersion of the group velocity and three different situations are possible:
\begin{enumerate}
\item No GVD:  $\alpha_1=0$.
\item Anomalous GVD: $\alpha_1=1$.
\item Normal GVD: $\alpha_1=-1$. 
\end{enumerate}
\item Damping. The coefficient $\alpha_2$ accounts for damping phenomena:
\begin{enumerate}
\item No damping: $\alpha_2=0$.
\item Damping: $\alpha_2>0$.
\end{enumerate}
\item\label{offaxis} Off-axis variations of the group velocity. The operator $P_2(\eps\nabla)$ is here to account for the fact that self-focusing pulses become asymmetric due to the variation of the group velocity of off-axis rays\footnote{This phenomenon is often referred to in optics as \emph{space-time focusing} \cite{Rothenberg}; we do not use this terminology here because this would be misleading. Indeed, physicists usually take $z$ as the evolution variable and treat $t$ as a space variable. This amounts to permuting $t$ and $z$ in (\ref{NLSfamily1}) and elsewhere. }. The operator $P_2(\eps \nabla)$ is a second order, self-adjoint, and positive operator in the sense that
$(P_2(\eps\nabla) u,u)\geq C \abs{u}^2_*$, 
with $\abs{u}^2_*\geq \abs{u}_2^2$. The norm $\abs{\cdot}_*$ 
may also control derivatives of $u$; we consider three cases:
\begin{enumerate}
\item No off-axis dependence: $P_2(\eps \nabla)=1$, 
and therefore $\abs{\cdot}_*=\abs{\cdot}_2$.
\item Full off-axis dependence: the norm $\abs{\cdot}_*$ 
controls all first order derivatives, $\abs{u}_*^2 \sim \abs{u}_2^2+\eps^2 \abs{\nabla u}_2^2$.
\item Partial off-axis dependence: the norm $\abs{\cdot}_*$ controls some 
but not all first order derivatives. More precisely, there exists $j$ ($j<d$) 
linearly independent vectors 
${\bf v}_j\in \R^d$ such that
$\abs{u}_*^2 \sim \abs{u}_2^2+\eps^2 \sum_{k=1}^j 
\abs{{\bf v}_k\cdot \nabla u}_2^2$.
\end{enumerate}
\item Self-steepening of the pulse. The operator $(1+i\eps \boldsymbol{\alpha}_3\cdot \nabla)$ in front of the nonlinearity
accounts for off-axis dependence of the nonlinearity, responsible for the possible formation of \emph{optical shocks}. Various cases are considered here:
\begin{enumerate}
\item No self-steepening. This corresponds to $ \boldsymbol{\alpha}_3=0$ and to the usual situation where the nonlinearity
does not contain any derivative.
\item Longitudinal self-steepening. When $ \boldsymbol{\alpha}_3$ is colinear to ${\bf e}_z$, there is a derivative in the nonlinearity along the direction $z$ of propagation of the laser.
\item Transverse self-steepening. When $ \boldsymbol{\alpha}_3\neq 0$ and $ \boldsymbol{\alpha}_3\cdot {\bf e}_z=0$, there is a derivative in the nonlinearity along a direction orthogonal to the direction of propagation.
\item Oblique self-steepening. When $ \boldsymbol{\alpha}_3$ is neither colinear nor orthogonal to ${\bf e}_z$.
\end{enumerate}
\end{enumerate}

\begin{remark}
The standard Schr\"odinger equation (\ref{eqNLS}) is obtained with $P_2(\eps\nabla)=1$, $\alpha_2=0$, $ \boldsymbol{\alpha}_3=0$, $f=0$, and $\alpha_1=1$. Using the above terminology, it corresponds to a cubic nonlinearity, without damping terms, off-axis variation of the group velocity and self-steepening, and with anomalous GVD.
\end{remark}

As previously said, (\ref{NLSfamily1}) stems from a more general \emph{vectorial} equation. 
For the sake of simplicity, we
give here the equation corresponding to the cubic case (or $f=0$ in (\ref{NLSfamily1})):
$$\eqref{NLSfamily1}_{\rm vect}\qquad
i P_2(\eps \nabla)\dta {\bf v} + (\Delta_\perp +\alpha_1\dz^2) {\bf v} +i \alpha_2 {\bf v}
 +\frac{1}{3}(1+i \eps  \boldsymbol{\alpha}_3\cdot \nabla )
\big[({\bf v}\cdot {\bf v})\overline{{\bf v}}+2\abs{{\bf v}}^2 {\bf v} \big]=0,
$$
where ${\bf v}$ is now a $\C^2$-valued function.

\begin{remark}\label{remvect}
Equation (\ref{NLSfamily1}) is in fact a particular case of (\ref{NLSfamily1})$_{\rm vect}$ corresponding to
initial data living on a one-dimensional subspace of $\C^2$. Indeed, if the initial condition to  (\ref{NLSfamily1})$_{\rm vect}$ has the form ${\bf v}_{\vert_{\tau=0}}=v^0(x){\bf v}_0$ with ${\bf v}_0\in \R^2$ and $v^0$ a scalar-valued function, then the solution to  (\ref{NLSfamily1})$_{\rm vect}$ takes the form ${\bf v}(\tau,x)=v(\tau,x){\bf v}_0$, where $v$ solves (\ref{NLSfamily1}) with initial condition $v^0$.
\end{remark}

\noindent
\textsc{Models with ionization processes}\\
In addition to the physical phenomena taken into account in (\ref{NLSfamily1}), 
it is necessary at high intensities to include ionization processes for a correct description of the laser pulse. The reason why this phenomenon is singled out here is because a system of two equations must be considered instead of the single equation (\ref{NLSfamily1}). In the most simple case (i.e. $P_2=1$, $f=0$, $\alpha_2=0$,
$ \boldsymbol{\alpha}_3=0$; for a more general model, see \eqref{NLSfamily2gen}), this system is given by
\begin{equation}\label{NLSfamily2}
\left\lbrace
\begin{array}{l}
\!\!\!\dsp i (\dt + c_g\dz ) u
+\eps (\Delta_\perp \!+\!\alpha_1\dz^2) u 
+ \eps ( \abs{u}^2 \! - \! \rho ) u  =
- i \eps c(\alpha_4 \abs{u}^{2K-2}\! u+\alpha_5\rho u) , \vspace{1mm} \\
\!\!\!\dsp \dt \rho = \eps \alpha_4 \abs{u}^{2K}+\eps \alpha_5 \rho \abs{u}^2,
\end{array}\right.
\end{equation}
with $\alpha_4,\alpha_5\geq 0$, $c>0$, and where $\rho$ is the density
of electrons created by ionization, while $\cg=c_g {\bf e}_z$ is the
group velocity associated to the laser pulse.

The system \eqref{NLSfamily2} does not directly compare to
\eqref{NLSfamily1} and \eqref{NLSfamily1}$_{\rm vect}$; indeed,
\eqref{NLSfamily2} is written in the fixed frame of the laboratory,
while \eqref{NLSfamily1} and \eqref{NLSfamily1}$_{\rm vect}$ are written
in a frame moving at the group velocity $\cg=c_g{\bf e}_z$ and with
respect to a rescaled time $\tau=\eps t$. Rather than
\eqref{NLSfamily2}, the NLS equation with ionization used in the
physics community is  its version written in the same variables
as \eqref{NLSfamily1} and \eqref{NLSfamily1}$_{\rm vect}$. More precisely,
if we set 
$$
u(t,X_\perp,z) =v(\eps t,X_\perp,z-c_g t),\qquad \rho(t,X_\perp,z) =\tilde \rho(\eps t,
X_\perp,z-c_g t),
$$
and $\tau=\eps t$, the equations \eqref{NLSfamily2} are {\it approximated}
\footnote{The approximation lies in the equation on 
$\rho$. In the new variables, the second equation of 
\eqref{NLSfamily2} is given by 
$$
\eps \dta \tilde\rho -c_g\dz \tilde\rho=\eps \alpha_4 \abs{v}^{2K}+\eps \alpha_5 \tilde\rho \abs{v}^2.
$$
In the physics literature, the term $\eps \dta \tilde\rho$ is neglected, and
this corresponds to \eqref{NLSfamily3}.
} by the following ones,
\begin{equation}\label{NLSfamily3}
\left\lbrace
\begin{array}{l}
\dsp i \dta v
+(\Delta_\perp +\alpha_1\dz^2) v +  (\abs{v}^2 -\tilde\rho) v  = 
- i c(\alpha_4 \abs{v}^{2K-2}v+\alpha_5\tilde\rho v) , \vspace{1mm} \\
\dsp -c_g\dz \tilde\rho = \eps \alpha_4 \abs{v}^{2K}+\eps \alpha_5 \tilde\rho \abs{v}^2.
\end{array}\right.
\end{equation}

The ionization processes taken into account by the systems
\eqref{NLSfamily2} and \eqref{NLSfamily3} are:
\begin{enumerate}
\item Photo-ionization. This corresponds to $\alpha_4>0$ and $K>0$ ($K$ is the number of photons necessary to liberate one electron).
\item Collisional ionization. When $\alpha_5>0$, a term corresponding to collisional ionization is added to the evolution equation on $\rho$.
\end{enumerate}
\begin{remark}
\item[(1)] The coupling with $\rho$ can of course be added to 
any equation of the family  (\ref{NLSfamily1}). 
\item[(2)] A vectorial variant of  (\ref{NLSfamily2}) and \eqref{NLSfamily3} can also 
be derived in the same lines as (\ref{NLSfamily1})$_{\rm vect}$.
\item[(2)] When $\alpha_4=\alpha_5=0$ and $\rho_{\vert_{t=0}}=0$
  (respectively $\lim_{z\to -\infty} \tilde\rho=0$)
one recovers  (\ref{NLSfamily1}) from  (\ref{NLSfamily2})
(respectively \eqref{NLSfamily3}).
\end{remark}

The rest of the paper is as follows. In section \ref{sectMab}, 
we recall the Maxwell equations, we give an abstract formulation 
and we discuss the spaces of initial data used for the Cauchy 
problem. 
In section \ref{sectDer}, we prove our main result about the 
rigorous derivation of general abstract versions of the models 
\eqref{NLSfamily1}, \eqref{NLSfamily1}$_{\rm vect}$, and 
\eqref{NLSfamily2}, \eqref{NLSfamily3}. In section \ref{sec:openproblem}, we analyze 
the role of the various parameters in \eqref{NLSfamily1} and 
\eqref{NLSfamily2} or \eqref{NLSfamily3} In particular, we consider whether they 
indeed prevent the breakdown in finite time or not. We also 
formulate a number of interesting open problems for these 
modified NLS equations. Finally, an appendix contains explicit 
computations for a physically relevant system of Maxwell 
equations which show that the abstract models derived in Section 
\ref{sectDer} take indeed the form of \eqref{NLSfamily1}, 
\eqref{NLSfamily1}$_{\rm vect}$, and \eqref{NLSfamily2}.

\subsection{Notations} We denote by \\
- ${\mathcal F}u(\xi)=\widehat{u}(\xi)$, the Fourier transform of $u$ with respect to the space variables $x\in \R^d$. \\
- ${\mathcal F}_tu(\omega)$, the Fourier transform of $u$ with respect to the time variable $t$.\\
- $f(D)$, the Fourier multiplier with symbol $f(\xi)$: $\widehat{f(D)u}(\xi)=f(\xi)\widehat{u}(\xi)$.\\
- $f(D_t)$, Fourier multipliers with respect to time.\\
- $\Lambda=(1-\Delta)^{1/2}$, the Fourier multiplier with symbol $(1+\abs{\xi})^{1/2}$.

\section{The Maxwell equations and an abstract mathematical formulation}\label{sectMab}

\subsection{The Maxwell equations}

The Maxwell equations in a non magnetizable medium are a set of two equations coupling the evolution of the \emph{magnetic field} $\ttB$ to the \emph{electric induction} ${\mathtt D}$,
\begin{equation}\label{Max1}
\left\lbrace
\begin{array}{l}
\dsp \dt \ttB+\curl \ttE=0,\\
\dsp \dt {\mathtt D}-\frac{1}{\mu_0}\curl \ttB= -\ttJ,
\end{array}\right.
\end{equation}
where  ${\mathtt D}$ is given in terms of the \emph{electric field} $\ttE$ and a \emph{polarization} $\ttP$ --- modeling the way
the dipole moment per unit volume depends on the strength of the electric field --- by
the relation
\begin{equation}\label{eqDB}
{\mathtt D}=\epsilon_0 \ttE+\ttP,
\end{equation}
and where we used standard notations $\epsilon_0$ and $\mu_0$ for the \emph{electric permittivity} and \emph{magnetic permeability} in vacuum.
The evolution equations (\ref{Max1}) go along with two constitutive laws, 
\begin{equation}\label{eqdiv}
\nabla\cdot {\mathtt D}=\rho,\qquad \nabla \cdot \ttB=0,
\end{equation}
where $\rho$ is the electric charge density.\\
As a consequence of the relation $\nabla\cdot {\mathtt D}=\rho$ and the second equation of (\ref{Max1}) we get the continuity equation
coupling $\rho$ to the \emph{current density} $\ttJ$,
\begin{equation}\label{trucmuche}
\dt \rho+\nabla\cdot \ttJ=0.
\end{equation}
Introducing the speed of light in vacuum
$$
c=\frac{1}{\sqrt{\epsilon_0\mu_0}},
$$
the equations (\ref{Max1}) can also  be rewritten as a set of two evolution equations on the magnetic field $\ttB$
and the electric field $\ttE$,
\begin{equation}\label{Max2}
\left\lbrace
\begin{array}{l}
\dsp \dt \ttB+\curl \ttE=0,\\
\dsp \dt \ttE- c^2 \curl \ttB=-\frac{1}{\epsilon_0}\dt \ttP -\frac{1}{\epsilon_0} \ttJ.
\end{array}\right.
\end{equation}
In order to get a closed system of equations, we still need two physical informations:
\begin{enumerate}
\item A description of the  polarization response to the electric field $\ttE$.
\item A description of the current density $\ttJ$.
\end{enumerate}
We first address the description of the polarization response in absence of current density and then proceed to describe the 
modification to be made when current density is included.

\subsubsection{The polarization response to the electric field}

Throughout this section, we assume that there is no charge nor current density ($\rho=0$, $\ttJ=0$). The general case will be handled in \S \ref{sectgeneral} below.

There exist various ways to describe the polarization $\ttP$; we use here a simple and natural model called ``nonlinear anharmonic oscillator'', according to which the polarization is found by solving the second order ODE
\begin{equation}\label{eqpolarisation}
\dt^2 \ttP+\Omega_1\dt \ttP+\Omega_0^2 \ttP-\nabla V_{NL}(\ttP)=\epsilon_0 b \ttE,
\end{equation}
where $b\in\R$ is a coupling constant and $\Omega_0,\Omega_1>0$ 
are frequencies,  and where $V_{NL}$ accounts for nonlinear effects. When such effects are
neglected, the description (\ref{eqpolarisation}) goes back to Lorentz \cite{Lorentz} and expresses the fact that electrons are bound to
the nucleus by a spring force. Nonlinearities have been added to this description by Bloembergen \cite{Bloembergen} and Owyyoung \cite{Owyoung} and the mathematical investigation of their influence was initiated by Donnat, Joly, M\'etivier and Rauch \cite{DJMR,JMR} (see also \cite{Lannes}).
\begin{remark}
In physics books, the polarization $\ttP$ is often sought as an expansion
$$
\ttP=\epsilon_0\big[\chi^1 [\ttE]+\chi^2 [\ttE,\ttE]+\chi^3 [\ttE,\ttE,\ttE]+\dots \big],
$$
where the operator $\chi^1$ is  called  the \emph{linear susceptibility} of the material, while for $j>1$, the operators
 $\chi^j$ are the \emph{$j$-th order nonlinear susceptibilities}. It is easy to check that the linear susceptibility
corresponding to (\ref{eqpolarisation}) is given by the nonlocal (in time) operator
$$
\chi^1[\ttE]=\chi^1(D_t)\ttE\quad\mbox{ with }\quad
\chi^1(\omega)=\frac{b}{\Omega_0^2-\omega^2+i\Omega_1\omega},
$$ 
where we used the Fourier multiplier notation,
$$
{\mathcal F}_t [\chi^1(D_t)\ttE](\omega)=\chi^1(\omega){\mathcal F}_t \ttE(\omega).
$$
\end{remark}
\begin{example}\label{exNL}
Typical examples for $V_{NL}(\ttP)$ are
\begin{itemize}
\item[(i)] Cubic nonlinearity: 
$$
V_{NL}(\ttP)=\frac{a_3}{4} \abs{\ttP}^4\quad\mbox{ and therefore } \quad \nabla V_{NL}(\ttP)=a_3\abs{\ttP}^2 \ttP 
$$
\item[(ii)] Cubic/quintic  nonlinearity:
$$
V_{NL}(\ttP)=\frac{a_3}{4} \abs{\ttP}^4-\frac{a_5}{6}\abs{\ttP}^6\quad\mbox{ and therefore } \quad \nabla V_{NL}(\ttP)=a_3\abs{\ttP}^2\ttP -a_5\abs{\ttP}^4\ttP.
$$
\item[(iii)] Saturated nonlinearity: there exists a function $v_{sat}: \R^+\to \R$, with $v_{sat}'$ and $v_{sat}''$ bounded on
$\R^+$ and such that
$$
V_{NL}(\ttP)=\frac{1}{2}v_{sat}(\abs{\ttP}^2) \quad\mbox{ and therefore } \quad \nabla V_{NL}(\ttP)=v_{sat}'(\abs{\ttP}^2)\ttP;
$$
for instance, one can take 
$$
v_{sat}(r)=\frac{a_3}{2}\frac{r^2}{1+ \frac{2a_5}{3a_3}r},
$$
in which case  $\nabla V_{NL}(\ttP)=a_3\abs{\ttP}^2\ttP -a_5\abs{\ttP}^4\ttP+h.o.t$, and is therefore the same
at the origin as in (ii) above, up to higher order terms (seventh order terms here).
\end{itemize}
\end{example}
We show in Appendix \ref{NDwithout} that Maxwell's equations can be put under the following dimensionless form\footnote{The constitutive laws (\ref{eqdiv}) are omitted because they are propagated by the equations if they are initially satisfied.} for all
the nonlinearities considered in Example \ref{exNL},
\begin{equation}\label{Max3}
\left\lbrace
\begin{array}{l}
\dsp \dt \ttB+ \curl \ttE=0,\vspace{1mm}\\
\dsp \dt \ttE- \curl \ttB+\frac{1}{\eps}\sqrt{\gamma} \ttQ^\sharp=0,\vspace{1mm}\\ 
\dsp \dt \ttQ^\sharp+\eps^{1+p}\omega_1 \ttQ^\sharp-\frac{1}{\eps}(\sqrt{\gamma} \ttE-\omega_0 \ttP^\sharp)=\eps \frac{\gamma}{\omega_0^3}
\big(1+f(\eps^r\abs{\ttP^\sharp}^2)\big)\abs{ \ttP^\sharp}^2\ttP^\sharp,
\vspace{1mm}\\ 
\dsp \dt \ttP^\sharp-\frac{1}{\eps}\omega_0 \ttQ^\sharp=0,
\end{array}\right.
\end{equation}
where $\gamma$, $\omega_0$, $\omega_1$, $r$ and $p$ are constants, $0<\eps\ll 1$ is a small parameter (the ratio of the duration of an optical cycle over the duration of the pulse, see Appendix \ref{NDwithout}), while $f$ is a smooth function vanishing at the origin.

\subsubsection{The case with charge and current density} \label{sectgeneral}

The main mechanism at stake in laser filamentation is certainly the
local ionization of the medium: once a powerful self-focusing laser
beam reaches high enough intensities, it ionizes the medium around itself. It leaves behind a narrow
channel of plasma, hereby  causing local defocusing that prevents blowup.\\
Taking current density into account, we come back to the set 
of equations \eqref{Max2}-\eqref{eqpolarisation}, and 
a physical description of the current density $\ttJ$ is needed.
This current density has the form
\begin{equation}\label{currenttotal}
\ttJ=\ttJ_{\rm e}+\ttJ_{\rm i},
\end{equation}
where $\ttJ_{\rm e}$ and $\ttJ_{\rm i}$ are respectively the free electron and ionization 
current densities. 

\noindent
- {\it Free electron current density}. Partial ionization of the material medium by the laser 
generates free electrons, with charge 
$q_{\rm e}(=-1.6\times10^{-19}C)$. This induces 
a free electron current density $\ttJ_{\rm e} = q_{\rm e} \rho_{\rm e} v_{\rm e}$,
where $\rho_{\rm e}$ is the electron density, and $v_{\rm e}$ is the 
electron velocity. A rough\footnote{This approximation can be
  deduced formally by assuming that ions are at rest and that
  electron motion is described by the compressible Euler 
system (see for instance \cite{BergeSkupin}).
Neglecting electron collisions, such a
  model yields $\dt \ttJ_{\rm e}=\frac{q_{\rm e}^2}{m_{\rm
      e}}\rho_{\rm e}\ttE$ which formally yields \eqref{modBS}
  assuming that $\ttE$ is as in \eqref{modEF} and that $\rho_{\rm e}$
  is not oscillating at leading order. It would of course be interesting
to provide a rigorous justification to these approximations.}, but standard model in nonlinear
optics is to take (see \cite{BergeSkupin2} and references therein),
\begin{equation}\label{modEF}
E(t,X)\sim E_{01}(t,X)e^{i({\bf k}_{\rm l}\cdot X-\omega_{\rm l} t)}+\cc,
\end{equation}
where ${\bf k}_{\rm l}$ and $\omega_{\rm l}$ are the laser wave number and
pulsation respectively, with $\abs{\dt E_{01}}\ll \abs{\omega_{\rm l}
  E_{01}}$ (slowly varying envelope approximation); the polarization current is then
taken under the form
\begin{equation}\label{modBS}
\ttJ_{\rm e}\sim J_{01}(t,X)e^{i({\bf k}_{\rm l}\cdot X-\omega_{\rm l} t)}+\cc
\quad\mbox{ with }\quad
J_{01}=i \frac{q_{\rm e}^2}{\omega_{\rm l} m_{\rm e}}\rho_{\rm e} E_{01},
\end{equation}
where $m_{\rm e}$ is the electron mass.
The drawback of this model is that it assumes that the electric field
and the current density field can be written at leading order as wave
packets (i.e. are given under the form \eqref{modEF}-\eqref{modBS}). In
particular, it does not provide any relation between the current
density $\ttJ$ and the electric field $\ttE$ that could be used in
Maxwell's equations \eqref{Max2}. We therefore propose here such a
relation, namely,
$$
\ttJ_{\rm e}=
\frac{q_{\rm e}^2}{\omega_{\rm l} m_{\rm e}}
{\mathcal H}\left(\frac{{\bf k}_{\rm l}}{k_{\rm l}^2}\cdot D\right)(\rho_{\rm e}\ttE),
$$
where $k_{\rm l}=\abs{{\bf k}_{\rm l}}$, and
${\mathcal H}$ is the regularization of the Hilbert
transform given by the Fourier multiplier 
\begin{equation}\label{Hilbert}
{\mathcal H}(D_z)=\frac{\sqrt{2}~i D_z}{(1+D_z^2)^{1/2}}.
\end{equation}
Quite obviously, this is consistent with the usual model \eqref{modBS}
since this latter is recovered at leading
order when the electric field is a wave packet under the form \eqref{modEF}.

Finally, the evolution of the electron density $\rho_{\rm e}$ is given by a source 
term $S$ representing external plasma sources. 
Taking into account photo-ionization and collisional 
ionization, but neglecting electron recombination
(see for instance \cite{BergeSkupin} for richer models), 
we have
$$
S = W(I)(\rho_{\rm nt}-\rho_{\rm e}) 
+ \frac{\sigma}{U_{\rm i}} \rho_{\rm e} I, 
$$
where the intensity is $I=|E|^2$ and $\rho_{\rm nt}$ is the 
constant density of neutral species. In the regime considered 
here\footnote{For higher intensities, electrons can 
tunnel out the Coulomb barrier of atoms, and $W(I)$ is 
modified.}, $\rho_{\rm e}$ is negligible compared to $\rho_{\rm nt}$ 
and the photo-ionization rate $W(I)$ takes the form 
$$
W(I) = \sigma_K I^{2K},
$$ 
for some constant coefficient $\sigma_K>0$ and with 
$K>1$ the number of photons needed to liberate 
one electron. The collisional ionization cross-section 
$\sigma$ depends on the laser frequency, 
and $U_{\rm i}$ is the ionization potential. 
Summing up, we get the following expression for the free electron current 
$\ttJ_{\rm e}$ and $\rho=\rho_{\rm e}$,
\begin{equation}\label{eqrhoJ}
\left\lbrace\begin{array}{l}
\dsp \ttJ_{\rm e}=\frac{q_{\rm e}^2}{\omega_{\rm l} m_{\rm
    e}}{\mathcal H}\left(\frac{{\bf k}_{\rm l}}{k^2_{\rm l}}\cdot D\right)(\rho\ttE),\\
\dsp \dt \rho = \sigma_K \rho_{\rm nt} \abs{\ttE}^{2K}
+ \frac{\sigma}{U_{\rm i}}\rho\abs{\ttE}^2.
\end{array}\right.
\end{equation}
- {\it Ionization current density}. It is also necessary to take into account losses due to
photo-ionization. We therefore introduce a ionization current density
$\ttJ_{\rm i}$ such that $\ttJ_{\rm i}\cdot E$ represents the energy lost by the laser
to extract electrons (per time and volume unit). More precisely, 
$\ttJ_{\rm i}\cdot E$ is equal to the energy necessary to
extract one electron (given by the ionization potential $U_{\rm i}$) multiplied by the
number of electrons per time and volume unit (given by $\dt
\rho$). Using the second equation of (\ref{eqrhoJ}), this gives
$$
\ttJ_{\rm i}\cdot \ttE = 
U_{\rm i}\sigma_K\rho_{\rm nt}\abs{\ttE}^{2K}+\sigma\rho \abs{\ttE}^2.
$$
We therefore take
\begin{equation}\label{currention}
\ttJ_{\rm i}=\big(U_{\rm i}\sigma_K\rho_{\rm nt}\abs{\ttE}^{2K-2}+\sigma\rho)\ttE.
\end{equation}

We show in Appendix \ref{NDwith} that after 
nondimensionalization, the set of equations 
\eqref{Max2}-\eqref{currenttotal}-\eqref{eqpolarisation}-\eqref{eqrhoJ}-\eqref{currention} 
(for the nonlinearities considered in Example \ref{exNL})
becomes, 
\begin{equation}\label{Max4}
\left\lbrace
\begin{array}{l}
\dsp \dt \ttB + \curl \ttE = 0 , \vspace{1mm} \\
\dsp \dt \ttE - \curl \ttB + \frac{1}{\eps}\sqrt{\gamma} \ttQ^\sharp =
-\eps  {\mathcal H}\big(\eps\frac{{\bf k}}{k^2}\cdot D_x\big)(\rho \ttE)
-\eps c_0 \big(c_1
\abs{\ttE}^{2K-2}+ c_2\rho\big)\ttE  , \vspace{1mm} \\ 
\dsp \dt \ttQ^\sharp 
+ \eps^{1+p}\omega_1 \ttQ^\sharp 
- \frac{1}{\eps}(\sqrt{\gamma} \ttE 
- \omega_0 \ttP^\sharp)=\eps \frac{\gamma}{\omega_0^3}
\big(1+f(\eps^r\abs{\ttP^\sharp}^2)\big)
\abs{ \ttP^\sharp}^2\ttP^\sharp , \vspace{1mm} \\ 
\dsp \dt \ttP^\sharp 
- \frac{1}{\eps}\omega_0 \ttQ^\sharp=0 , \vspace{1mm} \\ 
\dsp \dt \rho = 
\eps c_1 \abs{\ttE}^{2K} + \eps c_2 \rho \abs{\ttE}^2, 
\end{array}\right.
\end{equation}
with the same notations as in \eqref{Max3} for the constants 
$\gamma$, $\omega_0$, $\omega_1$, $r$ and $p$, the small 
parameter $\eps$, and the function $f$. 
In addition, we have here constants 
$c_0, c_1,c_2\geq0$, and we also recall that the definition of
the regularized Hilbert transform ${\mathcal H}$ is given in \eqref{Hilbert}.

\subsection{Abstract formulations}

\subsubsection{The case without charge nor current density} 
\label{sectFormulWithout}

We show in Appendix \ref{NDwithout} that the Maxwell equations 
can be put under the dimensionless form (\ref{Max3}), 
which itself has the form
\begin{equation}\label{eqgen}
\dsp \dt \bU +A(\partial)\bU+\frac{1}{\eps}E \bU+\eps^{1+p}  A_0 \bU=\eps  F(\eps,\bU),
\end{equation}
where 
$\bU$ is a $\R^n$ ($n\geq 1$) valued function depending on the time variable
$t$ and the space variable $x\in\R^d$ ($d\geq 1$),
$$
	\bU:\quad (t,x)\in \R\times\R^d\to \R^n. 
$$
The operator $A(\partial)$ is defined as
$$
	A(\partial)=\sum_{j=1}^d A_j\partial_j,
$$
where $\partial_j$ is the differentiation operator with respect to the $j$-th space coordinate. 
The matrix $A_0$ has size $n\times n$, $p$ is a positive number,  and the
following assumption is made on the matrices $A_j$ and $E$, and on the nonlinearity $F$. 
\begin{assumption}\label{assu1}
\item[(i)] The matrices $A_j$ ($j=1,\dots, d$) are constant coefficient $n\times n$, real valued, \emph{symmetric} matrices.
\item[(ii)] The matrix $E$ is a constant coefficient $n\times n$, real valued, \emph{skew symmetric} matrix.
\item[(iii)] There exists a smooth mapping $f: \R^+ \to \R$  vanishing at the origin, a real number $r> 0$, 
a quadratic form $Q:\C^n\mapsto \R^+$ and
a trilinear symmetric mapping $T:(\C^n)^3\to \C^n$ 
(with $T(\R^n\times\R^n\times\R^n)\subset\R^n$)
such that
$$
\forall U\in \C^n, \qquad F(\eps,U)= \big(1+ f\big(\eps^rQ(U))\big)\,T(U,\bar U, U).
$$
\end{assumption}
\begin{remark}
There exist of course situations where the leading order of 
the nonlinearity is not cubic (it can be quadratic for non centro-symmetric crystals for instance) or not of this form; since we are interested here in deriving variants of
the standard cubic nonlinear Schr\"odinger equation, we restrict ourselves to this framework for the sake
of simplicity. 
\end{remark}
\begin{example}\label{remM1}
As previously said, the dimensionless version (\ref{Max3}) 
of the Maxwell equations can be put under the form
(\ref{eqgen}) and they satisfy Assumption \ref{assu1} 
with $n=12$, $\bU=(\ttB,\ttE,\ttQ^\sharp,\ttP^\sharp)^T$. 
See Appendix \ref{sectexplcomp} for more details. 
 \end{example}

\subsubsection{The case with charge and current density}

As shown in Appendix \ref{NDwith}, the system \eqref{Max4} 
of Maxwell's equations with partial ionization can be put 
under the general form
\begin{equation}\label{eqgen2}
\left\{
\begin{array}{l}
\dsp \dt \bU + A(\partial)\bU + \frac{1}{\eps}E \bU
+ \eps^{1+p}  A_0 \bU = \\
\hspace{1cm}\dsp
\eps  F(\eps, \bU) - \eps {\mathcal H} 
\left({\eps\frac{{\bf k}}{k^2}\cdot D}\right)
({\bf W} C_1^T C_1 \bU) - \eps c \, C_1^T G(C_1\bU,\bW) , 
\vspace{1mm} \\
\dsp \dt \bW = \eps G(C_1\bU,\bW)\cdot C_1\bU , 
\end{array}
\right.
\end{equation}
where, as in \S~\ref{sectFormulWithout}, $\bU$ is a 
$\R^n$-valued function, whereas $\bW$ is an
$\R$-valued function of $(t,x)\in\R\times\R^d$. 
The matrices $A_j$ and $E$, as well as the nonlinearity $F$, 
satisfy Assumption~\ref{assu1}. 
Concerning the other coefficients of the system, we assume 
the following. 
\begin{assumption} \label{assu2}
\item[(i)] The real, constant matrix $C_1$  
has size $m \times n$ (with $m\in \N$).
\item[(ii)] The constant $c$ is positive.
\item[(iii)] There exists two real, positive constants $c_1$ and $c_2$
  and an integer $K\geq 1$ such that 
$$
\forall E\in \C^m, \, \forall w\in\C, \qquad 
G(E,w)=c_1\abs{E}^{2K-2}E + c_2 w E .
$$
\end{assumption}
\begin{remark}
As in Remark \ref{remM1} for the case without ionization, we can put
Maxwell's equation with ionization terms \eqref{Max4} under the
abstract form \eqref{eqgen2}. Using the same notations as in Remark
\ref{remM1}, the matrix $C_1$ is the projection matrix such that $C_1
{\bf U}=\ttE$, and ${\bf w}=\rho$.
\end{remark}

\subsection{The Cauchy problem}

We are considering initial conditions that correspond 
to laser pulses. In the case without charge nor current 
density (equation \eqref{eqgen}), they are 
fast oscillating \emph{wave packets} slowly modulated 
by an envelope, 
\begin{equation}\label{CI}
\bU_{\vert_{t=0}}=u^0(x)e^{i\frac{\bk\cdot x}{\eps}}+\cc,
\end{equation}
where $\bk\in\R^d$ is the (spatial) \emph{wave-number} 
of the oscillations. Taking charge and current density into account 
(equation \eqref{eqgen2}), we need to provide initial conditions for
$\bW$; since we are interested here in the situation where this
quantity is created by the laser when it reaches high intensities
near self-focusing, we take these initial conditions to be
initially zero\footnote{One could more generally and without 
supplementary difficulty consider non-oscillating initial conditions 
for the charge density $\bW$} for the sake of clarity.
\begin{equation}\label{CIVW}
\bW_{\vert_{t=0}}=0.
\end{equation}

The evolution equation (\ref{eqgen}) (as well as (\ref{eqgen2})) being of semilinear nature, it is natural to work with Banach algebra in view of a resolution by Picard iterations. Throughout this article, we assume
that 
$
u^0\in B$, with
$$
B=H^{t_0}(\R^d)^n,\quad (t_0>d/2) 
$$
or
$$
 B=W(\R^d)^n:=\{f\in {\mathcal S}'(\R^d)^n, \abs{f}_B:=\abs{\widehat{f}}_{L^1}<\infty\}
$$
 (the so called Wiener algebra, which is better adapted than $H^{t_0}(\R^d)^n$
to handle short pulses, see \cite{ColinLannes,LannesSP}). In both cases,  $B$ is stable by translations in Fourier space (this ensures that if $u^0\in B$ in (\ref{CI}) then $\bU_{\vert_{t=0}}\in B$) and is a Banach algebra in the sense that 
$$
\forall f,g\in B,\qquad f\cdot g\in B\quad\mbox{ and }\quad \abs{f\cdot g}_B\lesssim \abs{f}_B\abs{g}_B.
$$
For all $k\in \N$, we also define
$$
B^{(k)}=\{f\in B,\quad \forall \alpha\in \N^d,\quad \forall \abs{\alpha}\leq k,\quad \partial^\alpha f\in B\},
$$
endowed with its canonical norm.

We are interested in deriving asymptotics to the solution formed by (\ref{eqgen})-(\ref{CI}), with initial envelope $u^0\in B$, and more generally 
(\ref{eqgen2})-(\ref{CI})-\eqref{CIVW},  if we want to be able to handle ionization processes in nonlinear optics. This requires a further assumption on the nonlinearity $F$, namely that $F$ acts on $B$ and is locally Lipschitz.
\begin{assumption}\label{assuF}
In addition to (iii) of Assumption \ref{assu1}, the mapping $F$ satisfies, uniformly with respect to 
$\eps\in [0,1)$,
\item[(i)] For all $f\in B$, one has $F(f)\in B$ and
$$
\abs{F(\eps,f)}_B\leq C(\abs{f}_B)\abs{f}_B
$$
\item[(ii)] For all $f,g\in B$, one has
$$
\abs{d_f F(\eps,\cdot)g}_B\leq C(\abs{f}_B)\abs{g}_B.
$$
\end{assumption}
\begin{example}
When $B=H^{t_0}(\R^d)^n$ ($t_0>d/2$), Assumption \ref{assuF} 
is  always satisfied (by Moser's inequality); when $B=W(\R^d)^n$, 
the assumption holds for analytic nonlinearities.
\end{example}

\section{Derivation of NLS-type  equations}\label{sectDer}

The Schr\"odinger approximation takes into account the \emph{diffractive} effects that modify over large times the propagation
along rays of standard geometrical optics. These diffractive effects are of linear nature and are known \cite{DJMR,Kalyakin,JMR,Schneider,Lannes}
to appear for time
scales of order $O(1/\eps)$ for the initial value problem formed by the linear part of (\ref{eqgen}) and (\ref{CI}).
This is the reason why we are interested in proving the existence and describing the solutions to the (nonlinear) 
initial value problem (\ref{eqgen})-(\ref{CI})
over such time scales. 

For the sake of simplicity, the initial value problem (\ref{eqgen})-(\ref{CI}) (no ionization) is first considered. Up to minor modifications, the results of \S\S \ref{profileeq}, \ref{sectSVEA} and \ref{sectAs31} are known \cite{ColinLannes}; we reproduce them here because they are necessary steps to derive the family 
of NLS equations (\ref{NLSfamily1}) and, for the sake of clarity, their proof is sketched in a few words.
The general idea to derive the Schr\"odinger equations of \S \ref{sectAs33} with improved dispersion was introduced in \cite{ColinLannes} but the computations are carried further here. We then derive in \S \ref{sectselfsteep} a new class of models with derivative nonlinearity for which a local well-posedness result is proved. When applied to Maxwell's equations, these
derivative nonlinearities yield the so-called self-steepening operators;  to our knowledge, 
this is the first rigorous explanation of these terms.

The asymptotic description of  (\ref{eqgen2})-(\ref{CI}) 
(i.e. ionization is now included) 
is then addressed in \S \ref{sectNLSioniz}.
\subsection{The profile equation}\label{profileeq}

We show here that under reasonable assumptions on  $F$, solutions to the initial
value problem
(\ref{eqgen})-(\ref{CI}) exist for times of order $O(1/\eps)$ and that there
can be written under a very convenient form using  a \emph{profile} $U$,
\begin{equation}\label{eq2}
\bU(t,x)=U\left(t,x,\frac{\bk\cdot x-\omega t}{\eps}\right),
\end{equation}
with $U(t,x,\theta)$ periodic with respect to $\theta$ and for any $\omega\in \R$, provided
that $U$ solves the \emph{profile equation}
\begin{equation}\label{eq3}
\left\lbrace
\begin{array}{l}
	\dsp \dt U+A(\partial)U+\frac{i}{\eps}\cL(\omega D_\theta,\bk D_\theta)U+\eps^{1+p} A_0 U=\eps F(\eps, U),\vspace{1mm}\\
	\dsp U_\init(x,\theta)=u^0(x)e^{i\theta}+\cc.
	\end{array}\right.
\end{equation}
Here, we used the notation
\begin{equation}\label{eq4}
\cL(\omega D_\theta,\bk D_\theta)=-\omega D_\theta+A(\bk)D_\theta+\frac{E}{i},
\end{equation}
with $D_\theta=-i\partial_\theta$ and $A(\bk)=\sum_{j=1}^d A_j k_j$.

\begin{theorem}\label{th1}
Let $B=H^{t_0}(\R^d)^n$ or $B=W(\R^d)^n$ and $u^0\in B$. 
Under Assumptions \ref{assu1} and \ref{assuF}, 
there exists $T>0$ such that for all $0<\eps\leq1$ 
there exists a unique solution $\bU\in C([0,T/\eps];B)$ 
to (\ref{eqgen})-(\ref{CI}). 
Moreover, one can write $\bU$ under the form
$$
\bU(t,x)=U\left(t,x,\frac{\bk\cdot x-\omega t}{\eps}\right),
$$
where $U$ solves the \emph{profile equation} (\ref{eq3}).
\end{theorem}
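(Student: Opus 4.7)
The plan is to avoid attacking (\ref{eqgen})-(\ref{CI}) directly, as the $1/\eps$ prefactor would only yield an $O(1)$ existence time, and instead pass to the profile equation (\ref{eq3}). A direct chain rule computation shows that if $U(t,x,\theta)$ is $2\pi$-periodic in $\theta$ and solves (\ref{eq3}), then $\bU(t,x):=U(t,x,(\bk\cdot x-\omega t)/\eps)$ satisfies (\ref{eqgen}), because the extra contributions coming from differentiating $\theta=(\bk\cdot x-\omega t)/\eps$ are exactly $\frac{1}{\eps}\bigl((A(\bk)-\omega\Id)\partial_\theta+E\bigr)U$, which equals $\frac{i}{\eps}\cL(\omega D_\theta,\bk D_\theta)U$ by definition (\ref{eq4}). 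Moreover $U_\init=u^0(x)e^{i\theta}+\cc$ restores (\ref{CI}). Hence it suffices to construct $U\in C([0,T/\eps];Y)$, where $Y$ is a Banach algebra of $B$-valued periodic functions of $\theta$; the natural choice is
$$
Y=\Bigl\{U(x,\theta)=\sum_{n\in\Z}U_n(x)e^{in\theta}\ :\ \sum_n |U_n|_B<\infty\Bigr\},
$$
which is a Banach algebra and contains $U_\init$.

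Setting $L_\eps:=A(\partial)+\frac{i}{\eps}\cL(\omega D_\theta,\bk D_\theta)+\eps^{1+p}A_0$, rewrite (\ref{eq3}) in Duhamel form as
$$
U(t)=S_\eps(t)U_\init+\eps\int_0^t S_\eps(t-s)F(\eps,U(s))\,ds,\qquad S_\eps(t):=e^{-tL_\eps}.
$$
The crucial observation is that $L_\eps-\eps^{1+p}A_0$ has Fourier symbol $iA(\xi)+\frac{in}{\eps}(A(\bk)-\omega\Id)+\frac{1}{\eps}E$ at frequency $(\xi,n)\in\R^d\times\Z$; Assumption~\ref{assu1}(i)-(ii) (symmetric $A_j$'s, skew-symmetric $E$) makes this matrix anti-Hermitian, so $e^{-t(L_\eps-\eps^{1+p}A_0)}$ is pointwise unitary. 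A Duhamel/Gr\"onwall argument in the $A_0$ term then yields the uniform bound
$$
\|S_\eps(t)\|_{Y\to Y}\leq e^{t\eps^{1+p}\|A_0\|}\leq e^{T\|A_0\|}\qquad(0\leq t\leq T/\eps,\ 0<\eps\leq 1),
$$
valid for both $B=H^{t_0}(\R^d)^n$ and $B=W(\R^d)^n$, since in either case the norm of a matrix Fourier multiplier is controlled by the pointwise operator norm of its symbol.

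With this bound in hand, Picard iteration applies in the closed ball of radius $M:=2|U_\init|_Y$ in $C([0,T/\eps];Y)$: the map $V\mapsto S_\eps(\cdot)U_\init+\eps\int_0^\cdot S_\eps(\cdot-s)F(\eps,V(s))\,ds$ is a contraction provided $T>0$ is chosen small enough in terms of $M$ and the Lipschitz constant from Assumption~\ref{assuF}. The essential cancellation is that the $\eps$ prefactor in front of $F$ exactly compensates the length $T/\eps$ of the interval, because $S_\eps$ is uniformly bounded. The unique fixed point yields $U$ and hence $\bU$; uniqueness of $\bU$ in $C([0,T/\eps];B)$ then follows by applying a Gr\"onwall estimate (using Assumption~\ref{assuF}(ii) and the same skew-adjoint-semigroup bound) to the difference of two putative solutions of (\ref{eqgen}).

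The only non-routine step is the uniform semigroup estimate on the long interval $[0,T/\eps]$, which relies entirely on the algebraic structure of Assumption~\ref{assu1}: without the symmetry of the $A_j$'s and the skew-symmetry of $E$, the singular operator would fail to be anti-Hermitian at high frequency, the $1/\eps$ would reappear in the a priori estimates, and the existence time would collapse back to $O(1)$.
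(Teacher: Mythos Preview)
Your proof is correct and follows essentially the same route as the paper: solve the profile equation (\ref{eq3}) by a Picard iteration, exploiting that the singular linear part generates a unitary group (thanks to Assumption~\ref{assu1}(i)--(ii)) so that the explicit $\eps$ in front of the nonlinearity compensates the interval length $T/\eps$. The only cosmetic differences are that the paper works in the weighted $\ell^2$ space $H^k(\T;B)$ rather than your $\ell^1$-type space $Y$, and treats the $\eps^{1+p}A_0$ term as part of the right-hand side in the iteration rather than absorbing it into the semigroup and bounding the latter by $e^{T\|A_0\|}$.
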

\begin{proof}
The proof is a slight adaptation of the one given in \cite{ColinLannes} in the trilinear case; 
consequently, we just give
the main steps of the proof. Quite obviously, a solution $\bU$ 
to (\ref{eqgen})-(\ref{CI}) is given by (\ref{eq2}) if (\ref{eq3}) admits
a solution $U\in C([0,T/\eps];H^{k}(\T;B))$ ($k\geq 1$) where
\begin{equation} \label{defHk} 
H^{k}(\T;B)=\left\{ f=\sum_{n\in\Z} f_n e^{in\theta}, \abs{f}_{H^{k}(\T,B)}<\infty\right\}
\end{equation}
and with $\abs{f}^2_{H^{k}(\T,B)}=\sum_{n\in\Z}(1+n^2)^k\abs{f_n}^2_{B}$. For $k\geq 1$, $H^k(\T,B)$ is a Banach algebra; moreover the evolution operator $S(t)$ associated to the linear part of (\ref{eq3}),
$$
	S(t)=\exp\left(-tA(\partial)-\frac{i}{\eps}t\cL(\omega D_\theta,k D_\theta)\right)
$$
 is unitary on  $H^{k}(\T;B)$ (thanks to point (i) and (ii) of Assumption \ref{assu1}). One can 
therefore construct a (unique) solution to (\ref{eq3}) by a standard iterative scheme
$$
	U^{l+1}(t)=S(t)U^0 + \eps \int_0^t S(t-t')
	\big[ F(\eps,U^l)(t')-\eps^p A_0U^l \big]{\rm d}t',
$$
with $U^0=U_{init}$. 
Indeed, one has thanks to Assumption \ref{assuF},
$$
\abs{U^{l+1}(t)}_{H^k(\T;B)}\leq \abs{U^0}_{H^k(\T;B)}+\eps \int_0^t \big[\eps^p\abs{A_0 U^l}_{H^k(\T;B)}+C(\abs{U^l}_{H^k(\T;B)})\abs{U^l}_{H^k({\T;B})}\big];
$$
thanks to the $\eps$ in front of the integral. An estimate 
of the same kind is valid for a difference of iterates, 
by point (ii) of Assumption \ref{assuF}. By a fixed point 
argument, this ensures that  the sequence converges to 
a solution on $[0,T/\eps]$ for some $T>0$ 
\emph{independent of $\eps$}. 
Uniqueness then follows classically from an energy estimate on the difference of two solutions. 
\end{proof}

\subsection{The slowly varying envelope approximation}\label{sectSVEA}

The slowly varying envelope approximation (SVEA) consists in writing the profile $U$ under the form
\begin{equation}\label{apenv}
	U(t,x,\theta)\sim u_{env}(t,x)e^{i\theta}+\cc;
\end{equation}
plugging this approximation into the profile equation (\ref{eq3}) and keeping only
the first harmonic in the Fourier expansion yields easily (writing $u=\ue$),
$$
\dt u+A(\partial) u+\frac{i}{\eps}\cLk u+\eps^{1+p}  A_0 u=\eps  F^{env}(\eps, u),
$$
where
\begin{equation}\label{defFenv}
F^{env}(\eps, u)=\frac{1}{2\pi}
\int_0^{2\pi}e^{-i\theta}F(\eps, u e^{i\theta}+\cc) \, {\rm d}\theta.
\end{equation}
\begin{example}\label{excubic}
With $F(u)=\abs{u}^2 u$, one gets $F^{env}(u)=(u\cdot u)\overline{u}+2\abs{u}^2u$.
\end{example}
Denoting $D=-i\nabla$, we observe that
\begin{eqnarray*}
A(\partial)+\frac{i}{\eps}\cLk&=&A(\partial)+\frac{i}{\eps}(-\omega \Id+A(\bk))\\
&=&\frac{i}{\eps}(-\omega\Id+A(\bk+\eps D))\\
&:=&\frac{i}{\eps}\cL(\omega,\bk+\eps D),
\end{eqnarray*}
where the last notation is of course consistent with (\ref{eq4}).

As a consequence of these computations, we see that in order for (\ref{apenv}) to hold, it is necessary that 
$u=\ue$ satisfies the  \emph{envelope equation}
\begin{equation}
\label{eq18}
\left\lbrace
\begin{array}{l}
\dsp \dt u+\frac{i}{\eps}\cL(\omega,\bk+\eps D)u+\eps^{1+p}  A_0 u=\eps   F^{env}(\eps, u),\vspace{1mm}\\
\dsp u_\init=u^0.
\end{array}
\right.
\end{equation}

As implicitly assumed by omitting the fast oscillating scale in the argument of the envelope function $u_{env}(t,x)$, the envelope must not contain any fast oscillation. However, 
\begin{itemize}
\item The singular part of the linear term in (\ref{eq18}) creates fast oscillations with frequencies $\omega-\omega_j(\bk)$,
where the $\omega_j(\bk)$ stand for the eigenvalues of $\cL(0,\bk)$.
\item The nonlinearity creates other oscillations that may resonate with the linear propagator.
\end{itemize}

There is one way to avoid the catastrophic effects of these two scenarios. Choosing $\omega=\omega_j(\bk)$ for some $j$ and assuming that, up to $O(\eps)$ terms, the initial envelope $u^0$ is contained in the corresponding eigenspace
prevents the creation of oscillations by the linear propagator. This is the \emph{polarization condition}. The nonlinearity will however create harmonics of the main oscillation $\bk\cdot x-\omega_j(\bk)t$ and it is necessary to make a non resonance assumption. What is called \emph{characteristic variety} in the assumption below is the
set ${\mathcal C}_{\cL}\subset \R^{d+1}$ defined as
$$
{\mathcal C}_{\cL}=\{(\omega',\bk')\in \R^{1+d},\quad \det \cL(\omega',\bk')=0\}.
$$
Let us also recall that we assumed that the nonlinearity is under the form
$$
F(\eps,U)= \big(1+ f\big(\eps^rQ(U))\big)\,T(U,\bar U, U),
$$
with $f(0)=0$, $Q$ a quadratic form and $T$ a trilinear symmetric mapping. 
If $U$ is a monochromatic oscillation, the nonlinearity $\eps F(\eps,U)$ creates third harmonic with size $O(\eps)$, a fifth harmonic (if $f'(0)\neq 0$) with size $O(\eps^{1+r})$, etc. The non-resonance condition stated below holds for the $(2p+3)$-th harmonics, for all $p\geq 0$ such that $pr<1$ (the contribution of higher harmonics is small enough to be controlled even if it is resonant).
\begin{assumption}\label{assu3}
The characteristic variety $\CL$ and the frequency/wave number couple $(\omega,\bk)$
satisfy:
\begin{enumerate}
\item There exist $m$ functions $\omega_j \in C^\infty(\R^d\backslash\{0\})$ 
($j=1,\dots,m$) such that 
$$
	\CL\backslash\{0\}=\bigcup_{j=1}^m \big\{(\omega_j(\bk'),\bk'),\bk'\in\R^d\backslash\{0\}\big\};
$$
up to a renumbering, we assume that $(\omega,\bk)=(\omega_1(\bk),\bk)$.
\item There exists a constant $c_0>0$ such that
$$
	\inf_{\bk'\in\R^d}\abs{\omega-\omega_j(\bk')}\geq c_0,\qquad
	j=2,\dots m.
$$
\item (Non resonance assumption) One has $\pm (2p+3)(\omega,\bk)\notin {\mathcal C}_\cL$, for all $p\geq 0$ such that $pr<1$.
\end{enumerate}
\end{assumption}
\begin{notation}\label{nota2}
We denote by $\pi_j(\bk)$ ($j=1,\dots,m$) the eigenprojectors 
of the eigenvalues $\omega_j(\bk)$ of $A(\bk)+\frac{1}{i}E$; 
in particular, we have
$$	\cL(0,\bk)=A(\bk)+\frac{1}{i}E = 
\sum_{j=1}^m\omega_j(\bk)\pi_j(\bk).
$$
\end{notation}
\begin{example}
For Maxwell's equations, it is shown in Appendix 
\ref{sectexplcomp} that Assumption \ref{assu3} 
is satisfied with $m=7$ in dimension $d=3$, for $\omega\neq0$.
Explicit expressions for the eigenprojectors $\pi_j(\bk)$ 
are also provided in Appendix \ref{sectexplcomp}.
\end{example}
\begin{theorem}\label{th3}
Let $B=H^{t_0}(\R^d)^n$ or $B=W(\R^d)^n$ and $u^0\in B^{(1)}$, 
$r\in B$. 
Let Assumptions \ref{assu1}, \ref{assuF} and \ref{assu3} 
be satisfied and assume moreover that
$u^0=\pi_1(\bk)u^0+\eps r$. Then
\item[(i)] There exist $T>0$ and, for all $\eps\in(0,1]$, 
a unique solution $u\in C([0,T/\eps];B^{(1)})$ to (\ref{eq18}) 
with initial condition $u^0$.
\item[(ii)] There exists $\eps_0>0$ such that for all $0<\eps<\eps_0$, the solution $\bU$ to (\ref{eqgen})
provided by Theorem \ref{th1} exists on $[0,T/\eps]$ and
$$
\abs{\bU-\bU_{SVEA}}_{L^\infty([0,T/\eps]\times\R^d)}\leq 
\eps C(T,\abs{u^0}_B)(1+\abs{\nabla u^0}_B+\abs{r}_B),
$$
where $\bU_{SVEA}(t,x)=u(t,x) e^{i\frac{\bk\cdot x-\omega t}{\eps}}+\cc$.
\end{theorem}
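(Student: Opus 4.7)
My plan for part (i) is a Picard iteration on the Duhamel formulation of \eqref{eq18}, in the spirit of the proof of Theorem \ref{th1}. Rewriting the linear part as $\frac{i}{\eps}\cL(\omega,\bk+\eps D) = \frac{i}{\eps}\cL(\omega,\bk)+iA(D)$ exhibits it as a Fourier multiplier with skew-Hermitian symbol: indeed, Assumption \ref{assu1} makes both $\cL(\omega,\bk)$ and $A(\xi)$ Hermitian matrices, so $i\bigl[\frac{1}{\eps}\cL(\omega,\bk)+A(\xi)\bigr]$ is skew-Hermitian for every $\xi$. The associated propagator $S_\eps(t)$ is then unitary on $L^2$ and, commuting with $\nabla$, isometric on $B^{(1)}$ (both for $B=H^{t_0}(\R^d)^n$ and $B=W(\R^d)^n$). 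Assumption \ref{assuF} transfers to $F^{\rm env}$ through the averaging formula \eqref{defFenv}, and the $\eps$-prefactor in front of the nonlinearity buys one factor of $\eps$ per unit time: a contraction argument in $C([0,T/\eps];B^{(1)})$ produces a unique solution for some $T>0$ independent of $\eps$, with $\abs{u}_{B^{(1)}}$ controlled in terms of $\abs{u^0}_B+\abs{\nabla u^0}_B$.

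For part (ii), I would work with the profile formulation supplied by Theorem \ref{th1}, comparing the exact profile $U(t,x,\theta)$ against an approximate profile
\[
U_{\rm app}(t,x,\theta) = u(t,x)e^{i\theta}+\overline{u(t,x)}e^{-i\theta} + \eps^2 V(t,x,\theta),
\]
where $V=\sum_n V_n(t,x)e^{in\theta}$ is a corrector meant to kill the non-resonant harmonics generated by $F$ on the bare SVEA ansatz. Collecting Fourier modes in $\theta$ in \eqref{eq3}, the $n=\pm 1$ balance coincides with the envelope equation \eqref{eq18} by the very definition of $F^{\rm env}$ and is therefore satisfied; for each odd $n$ with $3\leq \abs{n}\leq 2p^\star+3$, where $p^\star$ is the largest integer with $p^\star r<1$, Assumption \ref{assu3}(3) ensures that $\cL(n\omega,n\bk)$ is invertible, and I set $V_n:=-i\cL(n\omega,n\bk)^{-1}F_n(u)$, where $F_n(u)$ denotes the $e^{in\theta}$ Fourier coefficient of $F(\eps,ue^{i\theta}+\cc)$. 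Higher harmonics $\abs{n}>2p^\star+3$ inherit the smallness $\eps^{p^\star r}\leq \eps$ from the Taylor expansion of $f(\eps^r Q)$, so the corresponding residuals are already of size $O(\eps^2)$ and need not be corrected.

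Plugging $U_{\rm app}$ into \eqref{eq3} leaves a residual $R_\eps$ of size $O(\eps^2)$ in $H^k(\T;B^{(1)})$, the dominant contributions being $\eps^2(\dt V_n+A(\partial)V_n)$ from the corrector and $O(\eps^3)$ from Taylor-expanding $F$ around $ue^{i\theta}+\cc$. The difference $W:=U-U_{\rm app}$ then solves \eqref{eq3} up to the Lipschitz perturbation $\eps\bigl[F(\eps,U)-F(\eps,U_{\rm app})\bigr]$ and the forcing $-R_\eps$, with initial datum $W\init=-\eps^2 V\init$ of size $\eps^2$ in $H^k(\T;B^{(1)})$. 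Since the linear flow of \eqref{eq3} is unitary on $H^k(\T;B^{(1)})$, a Gr\"onwall estimate on $[0,T/\eps]$ leveraging the $\eps$-prefactor on the nonlinearity gives $\abs{W}\lesssim \eps$ uniformly. Returning to $\bU$ via $\theta=(\bk\cdot x-\omega t)/\eps$ and using the embedding $H^k(\T;B^{(1)})\hookrightarrow L^\infty(\T\times\R^d)$ (valid both for $B=H^{t_0}$ with $t_0>d/2$ and for $B=W$), together with $\abs{\bU_{\rm app}-\bU_{SVEA}}_{L^\infty}=O(\eps^2)$, yields the announced bound.

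The main obstacle has two facets. First, the corrector coefficients $V_n$ are polynomials in $u$ of degree up to $2p^\star+3$, so bounding them in $B^{(1)}$ (as well as $\dt V_n+A(\partial)V_n$, where $\dt u$ is re-expressed via \eqref{eq18} as a spatial operator applied to $u$) relies crucially on the Banach algebra structure and depends polynomially on $\abs{u}_{B^{(1)}}$, itself controlled by $\abs{u^0}_B+\abs{\nabla u^0}_B$. Second, the polarization hypothesis $u^0=\pi_1(\bk)u^0+\eps r$ is precisely what produces the $\abs{r}_B$ factor in the final constant: the non-aligned part $\eps r$ lies in $B$ but not necessarily in $B^{(1)}$, and it drives, through the singular linear operator of \eqref{eq18}, a rapidly oscillating component of $u$ whose projections onto the eigenspaces of $\cL(\omega,\bk)$ associated to the non-zero eigenvalues $\omega_j(\bk)-\omega$ ($j\neq 1$, bounded below by Assumption \ref{assu3}(2)) remain of amplitude $O(\eps\abs{r}_B)$ throughout $[0,T/\eps]$. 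Without the polarization this component would be $O(1)$ and spoil the $O(\eps)$ error; with it, the contribution enters linearly in $\abs{r}_B$ inside the final constant and costs no derivative on $r$.
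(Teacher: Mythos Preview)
Your proof is essentially correct and follows the same strategy as the paper's (itself only a six-step sketch deferring details to \cite{ColinLannes}): Picard iteration for (i); then, for (ii), the non-resonance condition of Assumption~\ref{assu3}(3) to absorb the higher harmonics generated by $F$ (your explicit corrector $V_n=-i\cL(n\omega,n\bk)^{-1}F_n(u)$ is the standard way to implement the paper's Step~5), and the spectral gap of Assumption~\ref{assu3}(2) together with a non-stationary-phase argument to control the unpolarized part $u_{II}$ of the envelope (the paper's Steps 2--4). The paper simply reverses the order, establishing the bound $u_{II}=O(\eps)$ \emph{before} dealing with the harmonics.

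Two small points of caution. First, your residual bound for $\eps^2\partial_t V_n$ secretly requires $\abs{\dt u}_B=O(1)$, which from \eqref{eq18} involves the singular term $\frac{i}{\eps}\cL(\omega,\bk)u=\frac{i}{\eps}\cL(\omega,\bk)u_{II}$; this is $O(1)$ only \emph{once} $u_{II}=O(\eps)$ has been established, so that step must logically precede the corrector estimate rather than follow it as your last paragraph suggests. Second, the paper carries out the splitting $u=u_1+u_{II}$ with the Fourier-multiplier projectors $\pi_j(\bk+\eps D)$ rather than the constant matrices $\pi_j(\bk)$; this is what makes the non-stationary-phase argument in Step~4 clean (the symbol $\omega_j(\bk+\eps\xi)-\omega$ is uniformly bounded below for $j\neq1$ by Assumption~\ref{assu3}(2)), whereas with the constant projectors the linear evolution on $\mathrm{Range}\,\pi_j(\bk)$ is no longer diagonal and the argument requires an extra commutator step. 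Since $\pi_j(\bk)-\pi_j(\bk+\eps D)=O(\eps)$ on $B^{(1)}$, the two decompositions agree at this accuracy, but the paper's choice is cleaner.
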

\begin{proof}
Here again, the proof is a small variation of the one given in \cite{ColinLannes} for the trilinear case and $B=W(\R^d)^n$. We just indicate the main steps of the proof:\\
\emph{Step 1}. Existence and bounds of the solution $u$ to (\ref{eq18})  is established by a fixed point argument as in Theorem \ref{th1}.\\
\emph{Step 2}. We decompose $u$ as
$$
	u=u_1+u_{II}, \quad\mbox{ with }\quad u_{II}=\sum_{j=2}^m u_j,
 $$
and where $u_j=\pi_j(\bk+\eps D) u$ (see Notation \ref{nota2}).\\
\emph{Step 3.} Thanks to the assumption that $\omega=\omega_1(\bk)$ one gets from the equation obtained 
by applying $\pi_1(\bk+\eps D)$ to (\ref{eq18}) that 
$\abs{\dt u_1(t)}_B$ is uniformly bounded on $[0,T/\eps]$.\\
\emph{Step 4.} Using a non-stationary phase argument (on the semigroup formulation) relying on point (ii) of Assumption \ref{assu3} and the bound on $\dt u_1$ established in Step 3, 
and taking advantage of Assumption \ref{assuF},  we get that $\frac{1}{\eps}\vert u_{II}(t)\vert_B$ remains uniformly bounded on $[0,T/\eps]$.\\
\emph{Step 5.}  Using the non-resonance condition (iii) of Assumption \ref{assu3}, one can 
show that the third and higher harmonics created by the nonlinearity remain of order $O(\eps)$.
More precisely, the solution  $U\in H^1(\T;B)$ to (\ref{eq3}) provided by Theorem \ref{th1} can
be written as 
$$
	U(t,x,\theta)=U_{\rm app}(t,x,\theta)+\eps V(t,x,\theta),
$$
where $U_{\rm app}(t,x,\theta)=u(t,x)e^{i\theta}+\cc$, and 
$V$ remains bounded (with respect to $\eps$) in $C([0,T/\eps];H^1(\T;B)^n)$.  \\
\emph{Step 6.} Since $U(t)-U_{\rm app}(t)=\eps V(t)$, 
it follows from the above that 
$$
	\sup_{t\in [0,T/\eps]}
	\vert U(t)-U_{\rm app}(t)\vert_{H^1(\T;B)}
	\leq
	\eps C(T,\vert u^0\vert_B)(1+\vert\nabla u^0\vert_B+\abs{r}_B),
$$
and the theorem follows therefore from the observation that
$$
	\vert \bU-\bU_{SVEA}\vert_{L^\infty([0,T/\eps]\times\R^d)}
\leq \sup_{t\in [0,T/\eps]}
	\vert U(t)-U_{\rm app}(t)\vert_{H^1(\T;B)}.
$$
\end{proof}

\subsection{The Full Dispersion model}\label{sectAs31}

The idea is to diagonalize (\ref{eq18}) in order to work with a scalar equation. We project therefore
(\ref{eq18}) onto the eigenspace corresponding to the oscillating term. These naturally leads to introduce
$$
u_{FD}=\pi_1(\bk+\eps D)\ue.
$$
which naturally leads to the the \emph{full dispersion} scalar\footnote{The linear propagator is a scalar operator but the equation remains vectorial because of the nonlinear term. Indeed, $\pi_1(\bk)$ is in general not of rank $1$ (i.e. $\omega_1(\bk)$ is in general not of multiplicity one)} equation (writing $u=u_{FD}$)
\begin{equation}
	\label{eqfull}
	\left\lbrace
	\begin{array}{l}
	\!\dt u+\frac{i}{\eps}(\omega_1(\bk\!+\!\eps D)\!-\!\omega)u
	+\eps^{1+p} \pi_1(\bk\!+\!\eps D) A_0 u=\eps \pi_1(\bk\!+\!\eps D)F^{env}(\eps,u)\\
	\!u\,\init(x)=u^0(x)
	\end{array}\right.
\end{equation}
and with $\omega_1(\cdot)$ as in Assumption \ref{assu3}.\\
The following corollary shows that the full dispersion scalar
equation yields an approximation of the same precision as
the envelope equation for times $t\in [0,T/\eps]$.
\begin{coro}[Full dispersion model]\label{corofull}
	Under the assumptions of Theorem \ref{th3},  
\item[(i)] There exists $T>0$ and, for all $\eps\in(0,1]$,  
a unique solution $u\in C([0,T/\eps];B^{(1)})$ to (\ref{eqfull}) 
with initial condition $u^0$.
\item[(ii)]  There exists $\eps_0>0$ such that for all $0<\eps<\eps_0$, the solution $\bU$ to (\ref{eqgen})
provided by Theorem \ref{th1} exists on $[0,T/\eps]$ and
$$
\abs{\bU-\bU_{FD}}_{L^\infty([0,T/\eps]\times\R^d)}\leq 
\eps C(T,\abs{u^0}_B)(1+\abs{\nabla u^0}_B+\abs{r}_B),
$$
where $\bU_{FD}(t,x)=u(t,x) e^{i\frac{\bk\cdot x-\omega t}{\eps}}+\cc$.
\end{coro}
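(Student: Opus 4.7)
My plan is to handle (i) by the same Picard iteration used in Theorems \ref{th1} and \ref{th3}, and to handle (ii) by comparing $u_{FD}$ with the first spectral component $u_1 := \pi_1(\bk+\eps D)\ue$ of the envelope solution $\ue$ from Theorem \ref{th3}; combining the comparison with Theorem \ref{th3}(ii) via the triangle inequality then yields the stated estimate.

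For (i), note that the linear propagator $S_{FD}(t) := \exp(-it(\omega_1(\bk+\eps D)-\omega)/\eps)$ of (\ref{eqfull}) is a scalar Fourier multiplier with a smooth, real symbol, defined rigorously as in \cite{ColinLannes} with a smooth cutoff away from $-\bk$; hence it is an isometry of $B$ and of $B^{(1)}$ uniformly in $t\in \R$ and $\eps\in(0,1]$. Combined with Assumption \ref{assuF} and the $\eps$ prefactor in front of the nonlinearity, Duhamel's formula yields a contracting Picard iteration in $C([0,T/\eps];B^{(1)})$ for some $T$ independent of $\eps$, exactly as in the proofs of Theorems \ref{th1} and \ref{th3}.

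For (ii), I apply $\pi_1(\bk+\eps D)$ to the envelope equation (\ref{eq18}) and use the identity $\pi_1(\bk+\eps D)\cL(\omega,\bk+\eps D)=(\omega_1(\bk+\eps D)-\omega)\pi_1(\bk+\eps D)$ to get
\begin{equation*}
\dt u_1 + \frac{i}{\eps}(\omega_1(\bk+\eps D)-\omega)u_1 + \eps^{1+p}\pi_1(\bk+\eps D) A_0\ue = \eps\,\pi_1(\bk+\eps D)F^{env}(\eps,\ue),
\end{equation*}
which coincides with (\ref{eqfull}) on the left-hand side and on the right differs only by having $\ue$ in place of $u_{FD}$. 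Writing $\ue = u_1 + u_{II}$, Step 4 of the proof of Theorem \ref{th3} provides $\abs{u_{II}(t)}_B \lesssim \eps$ on $[0,T/\eps]$, so that Assumption \ref{assuF}(ii) bounds the residual between this $u_1$-equation and (\ref{eqfull}) in $B$ by $C\eps(\abs{u_{FD}-u_1}_B + \eps)$. The initial-data mismatch is $O(\eps)$ in $B$ as well, because the polarization hypothesis $u^0 = \pi_1(\bk)u^0 + \eps r$ combined with $\pi_1(\bk+\eps D)-\pi_1(\bk) = \eps\,m(\eps D)D$, with $m(\eps D)$ bounded on $B$, yields $\abs{u^0 - \pi_1(\bk+\eps D)u^0}_B \lesssim \eps(\abs{\nabla u^0}_B + \abs{r}_B)$.

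Subtracting the two equations and applying Duhamel with the isometry $S_{FD}(t)$, a Gronwall argument over $[0,T/\eps]$ yields $\abs{u_{FD}-u_1}_B(t)\leq C(T,\abs{u^0}_B)(1+\abs{\nabla u^0}_B+\abs{r}_B)\eps$, hence $\abs{u_{FD}-\ue}_B \leq \abs{u_{FD}-u_1}_B + \abs{u_{II}}_B = O(\eps)$ uniformly on $[0,T/\eps]$. Using the continuous embedding $B\hookrightarrow L^\infty$ (valid for both $B = H^{t_0}$ with $t_0 > d/2$ and for the Wiener algebra $W$), one obtains $\abs{\ue - u_{FD}}_{L^\infty}=O(\eps)$, and the triangle inequality $\abs{\bU-\bU_{FD}}_{L^\infty}\leq \abs{\bU-\bU_{SVEA}}_{L^\infty} + 2\abs{\ue-u_{FD}}_{L^\infty}$ together with Theorem \ref{th3}(ii) gives the desired bound. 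The main technical point is that the residual between the $u_1$- and $u_{FD}$-equations must be of order $\eps^2$ rather than $\eps$, so that it survives integration over the long time interval of length $T/\eps$; this is precisely what is afforded by the strengthening $u_{II}=O(\eps)$ from Step 4 of Theorem \ref{th3}.
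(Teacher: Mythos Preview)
Your proof is correct and in spirit matches the paper's one-line argument, which simply asserts that $u_{FD}$ ``coincides with $u_1$'' from Step 2 of Theorem \ref{th3}. Strictly speaking, $u_1$ satisfies an equation like \eqref{eqfull} but with $\ue$ rather than $u_{FD}$ in the nonlinear and damping terms and with initial datum $\pi_1(\bk+\eps D)u^0$, so the two functions do not literally coincide; your Gronwall comparison supplies exactly the $O(\eps)$ stability estimate that the paper leaves implicit.
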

\begin{remark}
Equation (\ref{eqfull}) does not correspond exactly to the ``full dispersion'' model of \cite{ColinLannes,LannesSP},
 where the r.h.s is $\eps\pi_1(\bk)F^{env}(\eps,u)$ rather than  $\eps\pi_1(\bk+\eps D)F^{env}(\eps,u)$
(but it can be found as an ``intermediate model'' in \cite{CGL}). This change does
not affect the estimate given in the Corollary, but it is important to keep track of the frequency dependence of
the polarization of the nonlinear term to introduce the ``self-steepening'' operators in \S \ref{sectselfsteep}. Note also that the "full dispersion" model is related to the so-called "unidirectional pulse propagation equation" used in nonlinear optics \cite{KolesikMoloney,BergeSkupin2}.
\end{remark}
\begin{proof}
This is actually a byproduct of the proof of Theorem \ref{th3} since $u_{FD}$ coincides with $u_1$ in Step 2 of the
proof of Theorem \ref{th3}.
\end{proof}

\subsection{The nonlinear Schr\"odinger (NLS) equation}\label{sectAs32}

The NLS equation is easily derived from (\ref{eqfull}) using Taylor expansions
\begin{eqnarray*}
\frac{i}{\eps}(\omega_1(\bk+\eps D)-\omega)&=&
\cg\cdot \nabla-\frac{i}{2}\nabla\cdot H_\bk\nabla +O(\eps^2),\\
\pi_1(\bk+\eps D)&=&\pi_1(\bk)+O(\eps),
\end{eqnarray*}
where  $\cg=\nabla\omega_1(\bk)$ and $H_\bk$ stands for the Hessian of $\omega_1$ at $\bk$. Neglecting the $O(\eps^2)$
terms in (\ref{eqfull}) we define the $NLS$ approximation $u=u_{NLS}$ as the solution to
\begin{equation}\label{NLSp}
\left\lbrace\begin{array}{l}
 \partial_t u+\cg\cdot\nabla u
-\eps \frac{i}{2}\nabla\cdot H_\bk\nabla u+\eps^{1+p} \pi_1(\bk) A_0 u=
\eps  \pi_1(\bk) F^{env}(\eps, u).\\
 u\,\init(x)=u^0(x)
	\end{array}\right.
\end{equation}
We then get easily (see \cite{ColinLannes,LannesSP}) the following justification of the NLS approximation.
\begin{coro}[Schr\"odinger approximation]\label{coroschrod} 
Under the assumptions of Theorem \ref{th3}, one has 
for all $u^0\in B^{(3)}$ such that $u_0=\pi_1(\bk)u_0$,
\item[(i)] There exists $T>0$ and, for all $\eps\in(0,1]$,  
a unique solution $u\in C([0,T/\eps];B^{(3)})$ to (\ref{NLSp}) 
with initial condition $u^0$.
\item[(2)] There exists $\eps_0>0$ and ${\mathfrak c}_{NLS}>0$ 
such that for all $0<\eps<\eps_0$, the solution $\bU$ to 
(\ref{eqgen}) provided by Theorem \ref{th1} exists 
on $[0,T/\eps]$ and
$$
\abs{\bU-\bU_{NLS}}_{L^\infty([0,T/\eps]\times\R^d)}
\leq \eps C(T,\abs{u^0}_B)(1+\abs{\nabla u^0}_B
+{\mathfrak c}_{NLS}\abs{u^0}_{B^{(3)}}),
$$
where $\bU_{NLS}(t,x)=u(t,x) e^{i\frac{\bk\cdot x-\omega t}{\eps}}+\cc$.
\end{coro}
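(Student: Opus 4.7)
The strategy is to derive the NLS estimate by comparing $\bU_{NLS}$ to the full dispersion approximation $\bU_{FD}$ of Corollary~\ref{corofull}, and then invoke the triangle inequality. Since Corollary~\ref{corofull} already provides
$$
\abs{\bU-\bU_{FD}}_{L^\infty([0,T/\eps]\times\R^d)}\leq \eps C(T,\abs{u^0}_B)(1+\abs{\nabla u^0}_B+\abs{r}_B),
$$
the core of the work is to establish (i) well-posedness of \eqref{NLSp} on the long time scale $[0,T/\eps]$ in $B^{(3)}$, and (ii) a bound of the form $\abs{u_{FD}-u_{NLS}}_{L^\infty([0,T/\eps];B)}\leq \eps C(T,\abs{u^0}_B)\,\mathfrak{c}_{NLS}\abs{u^0}_{B^{(3)}}$.

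For step (i), I would mimic the fixed point argument used in Theorem~\ref{th1} and Theorem~\ref{th3}. The linear propagator of \eqref{NLSp} is a constant-coefficient Fourier multiplier whose symbol is purely imaginary (the Hessian term contributes a real second-order dispersion, and $\cg\cdot\nabla$ is skew-adjoint), so it defines a unitary group on $B^{(k)}$ for any $k$. Assumption~\ref{assuF} ensures $F^{env}$ is locally Lipschitz on $B^{(3)}$, and the prefactor $\eps$ in front of both the nonlinearity and the $\eps^p A_0 u$ term makes the Picard iterates converge on a time interval of length $T/\eps$ with $T$ independent of $\eps$, exactly as in the previous proofs. The $B^{(3)}$ regularity is preserved by the linear flow and by $F^{env}$ (again by Assumption~\ref{assuF}(ii) applied differentiated equations), so the solution stays in $B^{(3)}$ with norm uniformly bounded in $\eps$ by $C(T,\abs{u^0}_{B^{(3)}})$.

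For step (ii), I would write the equation for $w=u_{FD}-u_{NLS}$ using \eqref{eqfull} and \eqref{NLSp}. Subtraction produces a linear source term
$$
R_1 u_{NLS}:=\Big[\frac{i}{\eps}(\omega_1(\bk+\eps D)-\omega)-\cg\cdot\nabla+\eps\tfrac{i}{2}\nabla\cdot H_\bk\nabla\Big]u_{NLS},
$$
which by Taylor's formula equals $\eps^2$ times a third-order Fourier multiplier applied to $u_{NLS}$, hence is bounded in $B$ by $\eps^2 C\abs{u_{NLS}}_{B^{(3)}}$. It also produces source terms coming from $(\pi_1(\bk+\eps D)-\pi_1(\bk))(\eps^{p}A_0 u_{NLS}-F^{env}(\eps,u_{NLS}))$, each of which is of size $\eps$ times a first-order multiplier applied to bounded (in $B$) quantities, hence controlled by $\eps\cdot \eps\cdot C(\abs{u^0}_{B^{(1)}})$ when multiplied by the prefactor $\eps$ of the nonlinear and damping terms. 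Together these yield a forcing bounded by $\eps^2 \mathfrak{c}_{NLS}\abs{u^0}_{B^{(3)}}C(T,\abs{u^0}_B)$ in $B$, uniformly on $[0,T/\eps]$. Applying the unitary linear propagator of \eqref{eqfull} via Duhamel, together with the locally Lipschitz bound on $F^{env}$, and a Gronwall argument over a time interval of length $T/\eps$ (which multiplies the $\eps^2$ forcing by $1/\eps$) yields $\abs{w(t)}_B\leq \eps\, C(T,\abs{u^0}_B)\,\mathfrak{c}_{NLS}\abs{u^0}_{B^{(3)}}$.

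The main technical obstacle is the usual one at this time scale: each cubic source term is a priori only $O(\eps^2)$, but integrated over $[0,T/\eps]$ it becomes $O(\eps)$, so the Gronwall argument has to be run carefully in a Banach algebra framework with the Lipschitz constant of $F^{env}$ controlled by $\abs{u_{FD}}_B+\abs{u_{NLS}}_B$, both of which must be shown to remain uniformly bounded on the long time scale. Once $\abs{w}_{L^\infty([0,T/\eps];B)}\leq \eps\, C(T,\abs{u^0}_B)\mathfrak{c}_{NLS}\abs{u^0}_{B^{(3)}}$ is established, the pointwise estimate $\abs{\bU_{FD}-\bU_{NLS}}_{L^\infty}\leq 2\abs{w}_{L^\infty([0,T/\eps];B)}$ (using that $B\hookrightarrow L^\infty$ in both Wiener algebra and $H^{t_0}$ cases) combined with Corollary~\ref{corofull} gives the claimed estimate.
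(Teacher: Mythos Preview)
Your plan is correct and is precisely the route the paper has in mind: the paper does not spell out a proof but simply invokes the Taylor expansions of $\omega_1(\bk+\eps D)$ and $\pi_1(\bk+\eps D)$ and refers to \cite{ColinLannes,LannesSP}, which amounts exactly to comparing $u_{NLS}$ with the full dispersion solution $u_{FD}$ of Corollary~\ref{corofull} via a Duhamel--Gronwall argument on the remainder terms. One small technical caveat worth keeping in mind when you write it up: the Taylor remainder $\frac{i}{\eps}(\omega_1(\bk+\eps D)-\omega)-\cg\cdot\nabla+\eps\tfrac{i}{2}\nabla\cdot H_\bk\nabla$ is not literally a homogeneous third-order multiplier (its symbol is an integral remainder involving $\omega_1'''$ along the segment $[\bk,\bk+\eps\xi]$), so the bound $\eps^2 C\abs{u_{NLS}}_{B^{(3)}}$ requires a uniform control of the third derivatives of $\omega_1$ in a neighborhood of $\bk$; this constant is exactly what the paper packages into $\mathfrak{c}_{NLS}$.
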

\begin{remark}\label{rembad}
The component ${\mathfrak c}_{NLS}\abs{u^0}_{B^{(3)}}$ in the error estimate of the Corollary is due to the
bad frequency behavior of the Schr\"odinger equation when the envelope of the oscillations ceases to 
be well localized in frequency. This is for instance the case for short pulses, chirped pulses (\cite{CGL,ColinLannes,LannesSP}), and near the focusing point. To describe such extreme situations, the standard NLS approximation does
a poor job, and this is why various variants have been derived in physics (e.g. \cite{BergeSkupin}).
\end{remark}
\begin{remark}
We assumed here that the polarization of the initial condition is exact (i.e. $r=0$ in Theorem \ref{th3}) for the sake
of simplicity; indeed, the solution remains polarized along $\pi_1(\bk)$ for all times and computations on real
physical models are much easier.
\end{remark}
\begin{example}\label{exrad}
In the frequent case where $\omega_1(\cdot)$ has a radial symmetry, and writing with a slight abuse of notation
$\omega_1(\bk)=\omega_1(k)$, with $k=\abs{\bk}$, we can write,
$$
\cg=\omega_1'(k){\bf e}_z,\qquad H_\bk=\frac{\omega_1'(k)}{\abs{\bk}}(I-{\bf e}_z\otimes{\bf e}_z)+{\bf e}_z\otimes{\bf e}_z \omega_1''(k),
$$
where we assumed without loss of generality that $(0z)$ is the direction of the wave number $\bk$, $\bk=k{\bf e}_z$.
In particular, (\ref{NLSp}) reads
$$
\dt u+\omega_1'(k)\dz u-\eps\frac{i}{2}\frac{\omega'_1(k)}{k}\Delta_\perp u-i\frac{\eps}{2}\omega_1''(k)\dz^2 u+\eps^{1+p} \pi_1(\bk) A_0 u=
\eps \pi_1(\bk) F^{env}(\eps, u),
$$
where $\Delta_\perp=\partial_x^2+\partial_y^2$ is the Laplace operator in transverse variables.
If we write $v(t,x,z)=u(\eps t,x,z-\omega_1'(k)t)$, we get 
$$
\partial_t v-\frac{i}{2}\frac{\omega'_1(k)}{k}\Delta_\perp v-i\frac{1}{2}\omega_1''(k)\dz^2 v+\eps^p \pi_1(\bk)A_0 v=
 \pi_1(\bk) F^{env}(\eps, v).
$$
\end{example}

\subsection{The nonlinear Schr\"odinger equation with improved dispersion
relation}\label{sectAs33}

We propose here to investigate further the Schr\"odinger equation with improved dispersion relation derived in \cite{ColinLannes}. As said in Remark \ref{rembad}, the NLS approximation has too bad dispersive properties to capture correctly the envelope of oscillating solutions to Maxwell's equations in extreme situations, where high frequencies are
released. Indeed, the dispersion relation $\omega_1(\cdot)$ of (\ref{eqgen}) is approximated by the second order polynomial
\begin{equation}\label{reldispNLS}
\omega_{NLS}(\bk')=\omega_1(\bk)+\cg\cdot (\bk'-\bk)+\frac{1}{2}(\bk'-\bk)\cdot H_\bk(\bk'-\bk).
\end{equation}
For Maxwell's equations and in dimension $d=1$,  
Figure \ref{aproxreldisp} shows that this dispersion relation 
is very poor when $\bk'$ is not close to $\bk$. 
\begin{figure}[ht!]
\begin{minipage}[t]{0.48\textwidth}
\begin{center}\includegraphics[width=\textwidth]{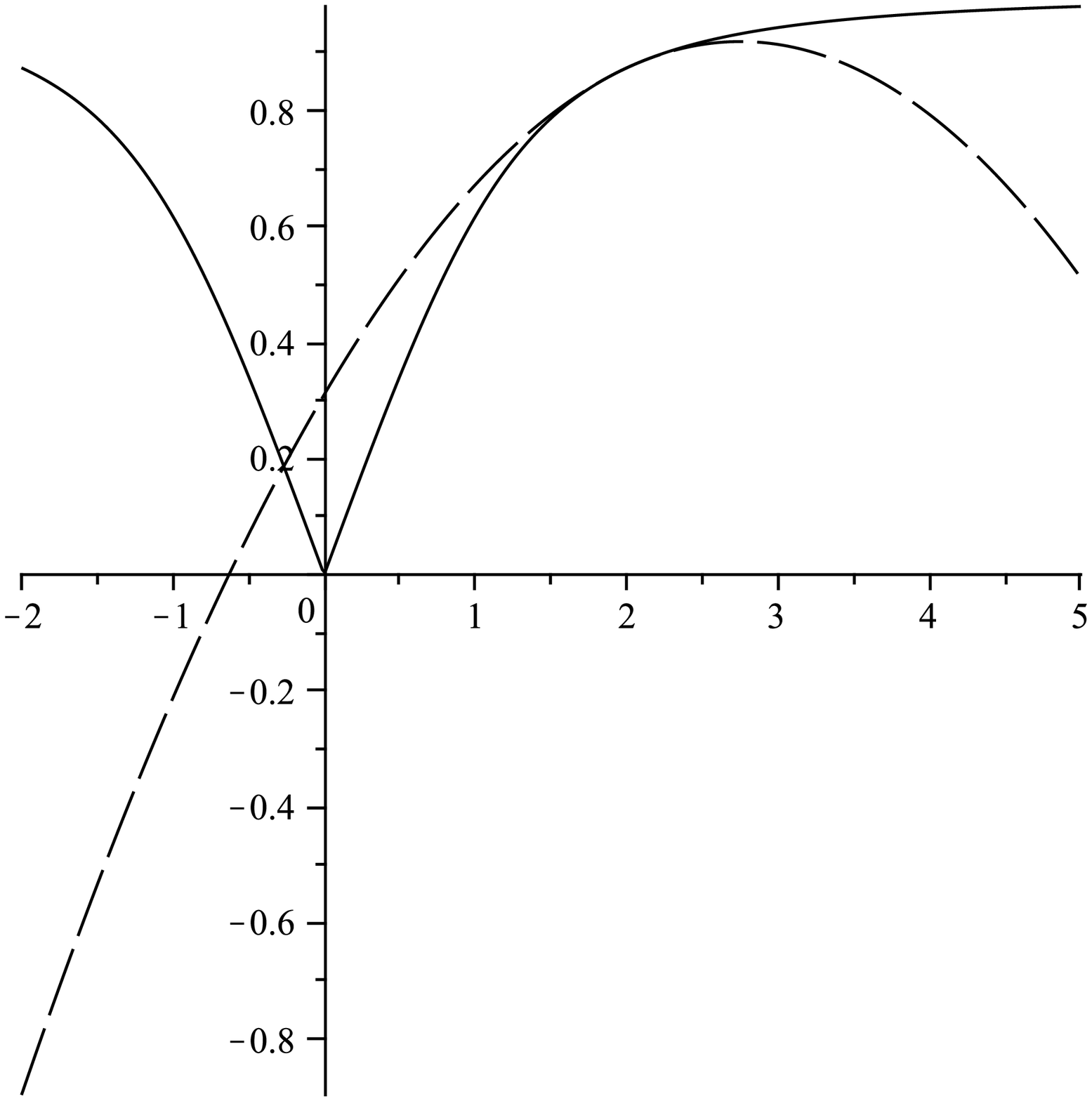}\end{center}
\end{minipage}
\begin{minipage}[t]{0.48\textwidth}
\begin{center}\includegraphics[width=\textwidth]{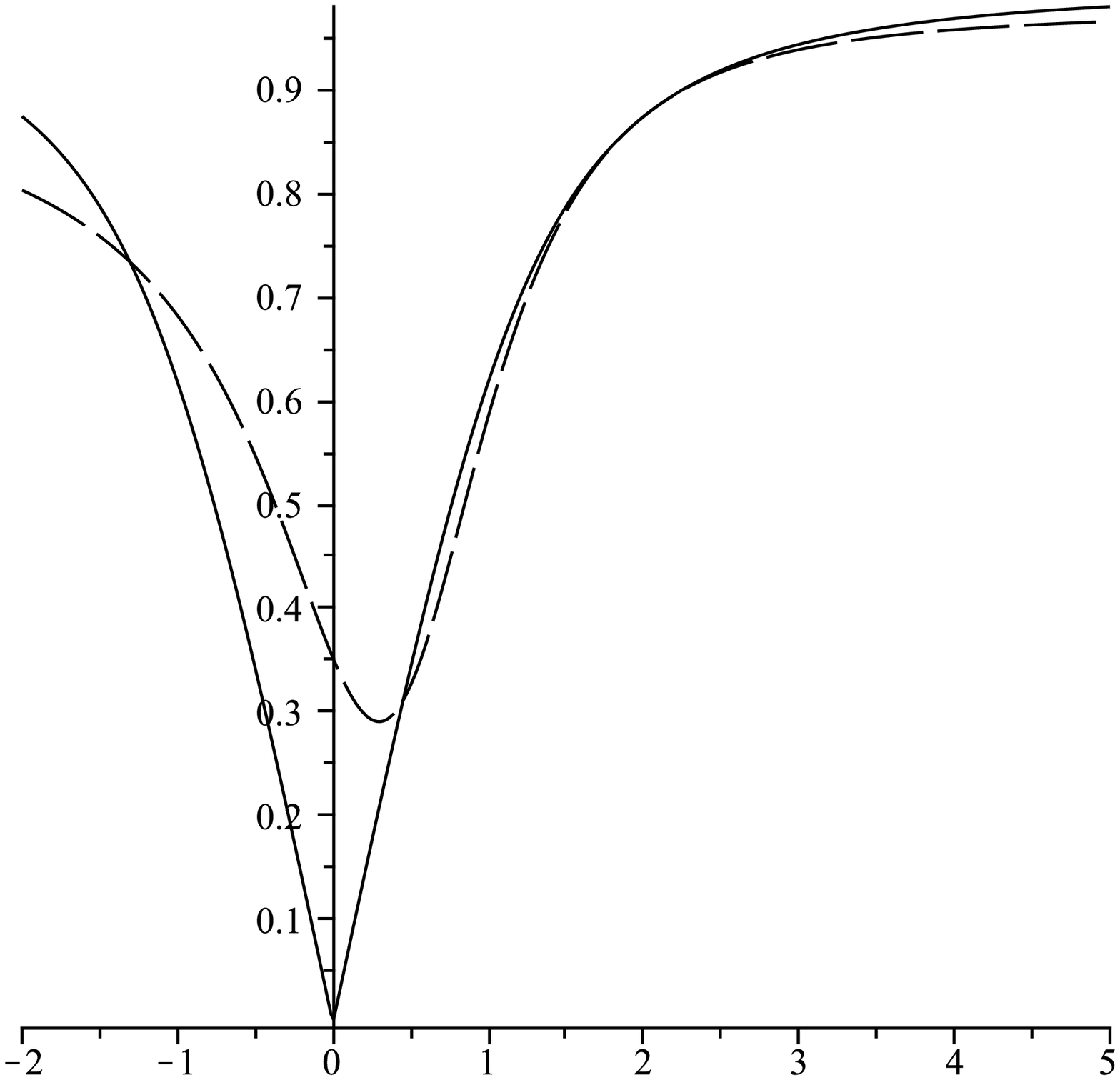}\end{center}
\end{minipage}
\caption{The $\omega_{+,-}$  component 
(see Appendix \ref{sectexplcomp}) of the characteristic 
variety of Maxwell's equation (solid) and the dispersion 
relation corresponding to the Schr\"odinger approximation 
(left) and improved Schr\"odinger (right). 
Here ${\bf k}=2$.}
\label{aproxreldisp}
\end{figure}
The idea introduced in \cite{ColinLannes} is to replace the \emph{linear} part of the Schr\"odinger approximation
by a linear operator that differs from the linear part of the Schr\"odinger approximation by $O(\eps^2)$ terms
only, but whose dispersion relation is far better.

More precisely, we consider an approximation under the form
\begin{equation}
	\label{uapp3}
	\bU_{imp}(t,x)= 
	u_{imp}(t,x)e^{i\frac{\bk\cdot x-\omega t}{\eps}}+\cc,
\end{equation}
where $u= u_{imp}$ solves the \emph{nonlinear Schr\"odinger equation
with improved dispersion relation}
\begin{equation}
	\label{eqpade}
	\left\lbrace
	\begin{array}{l}
	\dsp \big(1-i\eps{\bf b}\cdot\nabla-\eps^2\nabla\cdot B\nabla\big)
	\dt u\\
	\dsp\indent
	+\cg\cdot\nabla u-\eps\frac{i}{2} \nabla\cdot \big( H_\bk  +2 
        \cg\otimes{\bf b} \big)\nabla u+\eps^2 C_3(\nabla) u
	\\
	\dsp \indent+ \eps^{1+p} \pi_1(\bk) A_0 u=\eps  \pi_1(\bk) F^{env}(\eps, u)\\
	u\,\init(x)=u^0(x),
	\end{array}\right.
\end{equation}
where ${\bf b}\in \C^d$, $B\in {\mathcal M}_{d\times d}(\R)$ and $C_3(\nabla)$ is a third order homogeneous differential operator. We assume moreover that
\begin{equation}\label{hyp}
B\mbox{ is symmetric positive}, \quad {\bf b}\in Range(B),
	\quad\mbox{ and }\quad
	4-{\bf b}\cdot (B^{-1}{\bf b})>0
\end{equation}
(note that even though $B^{-1}{\bf b}$ is not unique when $B$ is not definite,
the scalar ${\bf b}\cdot (B^{-1}{\bf b})$ is uniquely defined). These assumptions ensure that the operator 
$(1-i\eps{\bf b}\cdot\nabla-\eps^2\nabla\cdot B\nabla)$ is invertible. This new model allows one to 
replace (\ref{reldispNLS}) by
$$
\omega_{imp}(\bk')=\omega_1(\bk)+\frac{\cg\cdot(\bk'-\bk)+\frac{1}{2}(\bk-\bk')\cdot (H_\bk+2\cg\otimes {\bf b})(\bk'-\bk)-C_3(\bk'-\bk)}{1+{\bf b}\cdot (\bk'-\bk)+(\bk'-\bk)\cdot B(\bk'-\bk)}.
$$
A good choice of $\bf b$, $B$ and $C_3$ allows a much better approximation of $\omega_1(\cdot)$, as shown in Figure \ref{aproxreldisp} for Maxwell's equation in dimension $d=1$.

Exactly as for Corollary \ref{coroschrod} we get the following result, where the only difference in the
error estimate with respect to Corollary \ref{coroschrod} is the constant ${\mathfrak c}_{imp}$ (with is much
smaller than ${\mathfrak c}_{NLS}$ for good choices of ${\bf b}$, $B$ and $C$). We refer to \cite{ColinLannes}
for the proof and numerical validations of this model for the approximation of short pulses and chirped pulses.
\begin{coro}[Schr\"odinger approximation with improved dispersion]\label{coroimp}
Under the assumptions of Theorem \ref{th3}, one has, 
for all $u^0\in B^{(3)}$ such that $\pi_1(\bk)u_0=u_0$,
\item[(i)] There exists $T>0$ and, for all $\eps\in(0,1]$,  
a unique solution $u\in C([0,T/\eps];B^{(3)})$ to (\ref{eqpade}) 
with initial condition $u^0$.
\item[(2)]  There exists $\eps_0>0$ and ${\mathfrak c}_{imp}>0$ such that for all $0<\eps<\eps_0$, the solution $\bU$ to (\ref{eqgen})
provided by Theorem \ref{th1} exists on $[0,T/\eps]$ and
$$
\abs{\bU-\bU_{imp}}_{L^\infty([0,T/\eps]\times\R^d)}
\leq \eps C(T,\abs{u^0}_B)(1+\abs{\nabla u^0}_B
+{\mathfrak c}_{imp}\abs{u^0}_{B^{(3)}}),
$$
where $\bU_{imp}(t,x)=u(t,x) e^{i\frac{\bk\cdot x-\omega t}{\eps}}+\cc$.
\end{coro}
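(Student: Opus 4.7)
The plan is to mirror the three-step strategy used for Corollary \ref{coroschrod}: first establish long-time well-posedness of (\ref{eqpade}) on $[0,T/\eps]$, then prove a consistency estimate showing that $u_{imp}$ is an $O(\eps^2)$-approximate solution of the full-dispersion equation (\ref{eqfull}), and finally combine this consistency bound with the stability of (\ref{eqfull}) and with Corollary \ref{corofull} to control $\bU - \bU_{imp}$. The interesting feature of the model is that the dispersion-improving operators appear both on the time derivative and inside the spatial part, but since everything is still of semilinear type with nonlinearity multiplied by $\eps$, the uniform-time analysis of Theorem \ref{th1} carries over.

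For well-posedness, I first invert the operator $P(\eps\nabla) := 1 - i\eps{\bf b}\cdot\nabla - \eps^2\nabla\cdot B\nabla$ in Fourier space. Its symbol $P(i\eps\xi) = 1 + \eps\,{\bf b}\cdot\xi + \eps^2 \xi\cdot B\xi$ is bounded below uniformly in $\eps$ and $\xi$ by $1 - \tfrac{1}{4}{\bf b}\cdot B^{-1}{\bf b} > 0$ thanks to (\ref{hyp}) and a Cauchy--Schwarz inequality in the $B$-inner product on $\mathrm{Range}(B)$; hence $P(\eps\nabla)^{-1}$ is a bounded Fourier multiplier on $B^{(k)}$ uniformly in $\eps$. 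Applying $P(\eps\nabla)^{-1}$ to (\ref{eqpade}) yields a semilinear evolution whose linear part generates a bounded group on $B^{(k)}$ and whose source term carries a prefactor $\eps$. A Picard iteration identical in structure to that of Theorem \ref{th1}, using the Banach-algebra property of $B^{(k)}$ and Assumption~\ref{assuF}, then furnishes a unique solution $u \in C([0,T/\eps];B^{(3)})$ for some $T>0$ independent of $\eps$.

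For consistency, I compare the Fourier symbol of the linear part of (\ref{eqpade}) with that of the full-dispersion equation (\ref{eqfull}). By construction, the rational symbol
$$
\frac{\cg\cdot(\bk'-\bk) + \tfrac12(\bk'-\bk)\cdot(H_\bk + 2\cg\otimes{\bf b})(\bk'-\bk) - C_3(\bk'-\bk)}{1 + {\bf b}\cdot(\bk'-\bk) + (\bk'-\bk)\cdot B(\bk'-\bk)}
$$
matches the Taylor expansion of $\omega_1(\bk')-\omega_1(\bk)$ at $\bk$ through order three, so plugging $u_{imp}$ into (\ref{eqfull}) produces a linear residual of size $O(\eps^2)$ in $B$, whose prefactor is a Sobolev-type seminorm of $u_{imp}$ controlled via Step~1 by $\abs{u^0}_{B^{(3)}}$; the global-in-frequency quality of the rational approximation is absorbed into the constant ${\mathfrak c}_{imp}$. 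The replacement of $\pi_1(\bk+\eps D)$ by $\pi_1(\bk)$ in the nonlinear term contributes an additional $O(\eps)$ discrepancy, exactly as in Corollary~\ref{coroschrod}. A Gr\"onwall argument on the Duhamel form of (\ref{eqfull}), enabled once again by the $\eps$ prefactor of the nonlinearity, upgrades this consistency bound into $\abs{u_{FD} - u_{imp}}_B \leq \eps\, C(T,\abs{u^0}_B)(1 + \abs{\nabla u^0}_B + {\mathfrak c}_{imp}\abs{u^0}_{B^{(3)}})$ on $[0,T/\eps]$; combining with Corollary \ref{corofull} and the triangle inequality then gives the stated bound on $\bU - \bU_{imp}$.

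The main obstacle lies in the case $B = W(\R^d)^n$, where uniform boundedness of $P(\eps\nabla)^{-1}$ and of the relevant commutators as Fourier multipliers on $W$ must be verified by hand, since Plancherel is unavailable: one must show that the inverse Fourier transforms of $1/P(i\eps\xi)$ and its derivatives in $\xi$ stay bounded in $L^1$ uniformly in $\eps$. This follows from the strict positivity and smoothness of the symbol together with its natural $\eps\xi$ scaling, but it requires the kind of careful symbol analysis already carried out in \cite{ColinLannes}, to which we refer for the detailed multiplier estimate and the numerical validation of the resulting model.
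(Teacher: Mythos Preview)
Your approach is correct and is exactly the one the paper intends: the paper itself gives no proof beyond ``Exactly as for Corollary~\ref{coroschrod}'' and a reference to \cite{ColinLannes}, and your three steps (invert $P(\eps\nabla)$ and run the semilinear Picard scheme of Theorem~\ref{th1}; check $O(\eps^2)$ consistency with (\ref{eqfull}); conclude via Gr\"onwall and Corollary~\ref{corofull}) are precisely what that deferral unpacks to.

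Two small points are worth cleaning up. First, the rational symbol of (\ref{eqpade}) matches the Taylor expansion of $\omega_1(\bk')-\omega_1(\bk)$ only through \emph{second} order for generic ${\bf b},B,C_3$ (the $\cg\otimes{\bf b}$ term is designed exactly to cancel the cross term coming from expanding the denominator), so the linear residual is $\tfrac{1}{\eps}\,O((\eps D)^3)=O(\eps^2)$ acting on $B^{(3)}$; your stated conclusion is right but the phrase ``through order three'' is misleading. Second, your worry about the Wiener algebra is misplaced: a Fourier multiplier $m(D)$ is bounded on $W(\R^d)^n$ as soon as $m\in L^\infty$, since $\abs{m(D)f}_W=\abs{m\,\widehat f}_{L^1}\le \abs{m}_{L^\infty}\abs{f}_W$. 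The uniform lower bound on $P(i\eps\xi)$ you already proved therefore suffices, and no $L^1$ control of $\mathcal F^{-1}(1/P)$ is needed.
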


\begin{example}\label{exrad2}
In the framework of Example \ref{exrad}, i.e. if  $\omega_1(\bk)=\omega_1(k)$ with $k=\abs{\bk}$ and $\bk=k{\bf e}_z$,
(\ref{eqpade}) can be written
\begin{eqnarray*}
 (1-i\eps {\bf b}\cdot \nabla -\eps^2\nabla\cdot  B\nabla)\dt u+\omega_1'(k)\dz u-\eps\frac{i}{2}\big(\frac{\omega'_1(k)}{k}\Delta_\perp +\omega_1''(k)\dz^2\big) u\\
=-\eps i\omega_1'(k){\bf b}\cdot \nabla\dz u+\eps^2C_3(\nabla)u+\eps^{1+p} \pi_1(\bk) A_0 =
\eps \pi_1(\bk) F^{env}(\eps, u).
\end{eqnarray*}
If we write $v(t,x,z)=u(\eps t,x,z-\omega_1'(k)t)$ and choose $C_3(\nabla)=-\omega_1'(k) \nabla\cdot B\nabla\dz$,
we get 
\begin{eqnarray}
\nonumber
(1-i\eps {\bf b}\cdot \nabla -\eps^2\nabla\cdot  B\nabla)\partial_t v-\frac{i}{2}\big(\frac{\omega'_1(k)}{k}\Delta_\perp +\omega_1''(k)\dz^2\big) v+\eps^p \pi_1(\bk)A_0 v\\
\label{eqrad2}
 =\pi_1(\bk) F^{env}(\eps, v).
\end{eqnarray}
A similar equation has been proposed in \cite{Lannes_book} \S 8.5.3 in the
framework of water waves equations.
\end{example}

\subsection{The NLS equation with frequency dependent polarization}\label{sectselfsteep}

The idea here is to improve the rough approximation $\pi_1(\bk+\eps D)\sim \pi_1(\bk)+O(\eps)$
used to derive the NLS approximation (see \S \ref{sectAs32}). Indeed, when the description of the envelope
of the laser pulse requires a broad band of frequencies as in the situations mentioned in \S \ref{sectAs33},
the variations of the polarization term $\pi_1(\bk+\eps D)$ in front of the nonlinearity in (\ref{eqfull})
become important and should be taken into account. We therefore make the following approximation,
$$
\pi_1(\bk +\eps D)\sim(1-i\eps {\bf b}\cdot\nabla-\eps^2\nabla\cdot B\nabla)^{-1}\big[\pi_1(\bk)+\eps  \pi_1'(\bk)\cdot D-i\eps ({\bf b}\cdot \nabla)\pi_1(\bk)\big],
$$
where ${\bf b}$ and $B$ are the same as in the NLS approximation with improved dispersion (\ref{eqpade}). In particular, if ${\bf b}=0$ and $B=0$ (standard NLS), then the above approximation coincides with the
first order Taylor expansion
$$
\pi_1(\bk +\eps D)=\pi_1(\bk)+\eps  \pi_1'(\bk)\cdot D.
$$
The general formula has the same accuracy as this Taylor expansion 
as $\eps\to 0$. The corresponding approximation is given by
\begin{equation}
	\label{uapp4}
	\bU_{pol}(t,x)= 
	u_{pol}(t,x)e^{i\frac{\bk\cdot x-\omega t}{\eps}}+\cc,
\end{equation}
where $u= u_{pol}$ solves the \emph{nonlinear Schr\"odinger equation
with frequency dependent polarization}
\begin{equation}
	\label{eqpolar}
	\left\lbrace
	\begin{array}{l}
	\dsp \big(1-i\eps{\bf b}\cdot\nabla-\eps^2\nabla\cdot B\nabla\big)
	\dt u\\
	\dsp\indent
	+\cg\cdot\nabla u-\eps\frac{i}{2} \nabla\cdot \big( H_\bk  +2 
        \cg\otimes{\bf b} \big)\nabla u+\eps^2 C_3(\nabla) u
	+ \eps^{1+p} \pi_1(\bk) A_0 u\\
        \dsp \indent\indent =\eps  \big[\pi_1(\bk)+\eps  \pi_1(\bk)\pi_1'(\bk)\cdot D-i\eps ({\bf b}\cdot \nabla)\pi_1(\bk)\big] F^{env}(\eps, \pi_1(\bk)u)\\
	u\,\init(x)=u^0(x),
	\end{array}\right.
\end{equation}
where ${\bf b}$, $B$ and $C_3(\nabla)$ are the same as in (\ref{eqfull}). 

Contrary to all the previous models, the nonlinearity in (\ref{eqpolar}) seems to be of quasilinear rather than
nonlinear nature. It turns out however that the presence of the operator $\big(1-i\eps{\bf b}\cdot\nabla-\eps^2\nabla\cdot B\nabla\big)$ in front of the time derivative plays a smoothing role allowing the control of one or several first order derivatives (see the discussion in point (\ref{offaxis}) in p. \pageref{offaxis}). If the first order
derivatives involved in the nonlinearity are all controlled by this smoothing operator, then the nonlinearity remains
semilinear in nature. As shown in the proof below, the component $-i\eps ({\bf b}\cdot \nabla)\pi_1(\bk) F^{env}(\eps, \pi_1(\bk)u)$ of the nonlinearity is always semilinear in this sense. This is not the case for the component
$\eps  \pi_1(\bk)\pi_1'(\bk)\cdot D F^{env}(\eps, \pi_1(\bk)u)$ that may be of quasilinear nature, in which
case a symmetry assumption is needed on the nonlinearity to ensure
local well-posedness. In order to state this assumption, it is
  convenient to introduce the norm $\abs{\cdot}_*$ defined as
$$
\abs{u}_*^2=\big(u, (1-i\eps {\bf b}\cdot \nabla-\eps^2\nabla\cdot B\nabla)u\big).
$$
\begin{assumption}\label{assQL}
For all $v\in W^{1,\infty}(\R^d)^n$ and $u\in L^2(\R^d)^n$ such that $\pi_1(\bk)u=u$, one has
$$
\forall 1\leq j\leq d,
\qquad \Re\Big(\pi_1(\bk)\pi_1'(\bk)\cdot {\bf e}_j 
d_{{v}} F^{env} D_j u,u\Big)\leq 
\cst \abs{v}_{W^{1,\infty}}\abs{u}_*^2,
$$
where ${\bf e}_j$ is the unit vector in the $j$-th direction of $\R^d$ and $d_vF^{env}$ is the derivative at $v$ of the mapping $u\mapsto F^{env}(\eps,u)$.
\end{assumption} 
\begin{example}
The computations performed in Appendix \ref{sectexplcomp} show that this assumption is satisfied by the dimensionless Maxwell equations (\ref{MaxND}).
\end{example}

The approximation furnished by (\ref{eqpolar}) is 
justified by the following corollary (the difference in the estimate with respect to Corollary \ref{coroimp}
is just a better nonlinear constant, denoted $C_{pol}$ to insist on this point). For the sake of simplicity, 
we take $B=H^{t_0}(\R^d)^n$ ($t_0>d/2$) here, but adaptation to Wiener spaces are possible.
\begin{coro}[Schr\"odinger approximation with frequency dependent polarization]\label{coropol} 
Let the assumptions of Theorem \ref{th3} be satisfied 
and assume moreover that Assumption \ref{assQL} 
holds with $B=H^{t_0}(\R^d)$ ($t_0>d/2$). 
Then for all $u_0\in B^{(3)}$ such that $\pi_1(\bk)u_0=u_0$, one has
\item[(i)] There exists $T>0$ and, for all $\eps\in(0,1]$,  
a unique solution $u\in C([0,T/\eps];B^{(3)})$ to (\ref{eqpolar}) 
with initial condition $u^0$.
\item[(2)]  There exists $\eps_0>0$ and ${\mathfrak c}_{imp}>0$ such that for all $0<\eps<\eps_0$, the solution $\bU$ to (\ref{eqgen})
provided by Theorem \ref{th1} exists on $[0,T/\eps]$ and
$$
\abs{\bU-\bU_{pol}}_{L^\infty([0,T/\eps]\times\R^d)}
\leq \eps C_{pol}(T,\abs{u^0}_B)(1+\abs{\nabla u^0}_B
+{\mathfrak c}_{imp}\abs{u^0}_{B^{(3)}}),
$$
where $\bU_{pol}(t,x)=u(t,x) e^{i\frac{\bk\cdot x-\omega t}{\eps}}+\cc$.
\end{coro}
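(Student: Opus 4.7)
The plan is to mirror the strategy used for Corollaries \ref{corofull} and \ref{coroimp}, the new ingredient being a modified energy estimate tailored to the quasilinear component of the nonlinearity in \eqref{eqpolar}. I would proceed in three steps: (a) local well-posedness of \eqref{eqpolar} on a time interval of length $O(1/\eps)$; (b) consistency: $u_{pol}$ satisfies \eqref{eqfull} up to an $O(\eps^2)$ source on $[0,T/\eps]$; (c) a stability comparison of $u_{pol}$ with the full dispersion solution $u_{FD}$ provided by Corollary \ref{corofull}, which combined with that corollary and the embedding $B^{(3)}\hookrightarrow L^\infty$ yields the stated estimate.

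Step (a) is the main novelty and the main obstacle. Under \eqref{hyp}, the operator $(1-i\eps\mathbf{b}\cdot\nabla-\eps^2\nabla\cdot B\nabla)$ is an invertible positive Fourier multiplier, inducing the norm $\abs{\cdot}_*$. After inverting it in \eqref{eqpolar}, the linear part becomes a skew-adjoint first order Fourier multiplier, and a direct Picard scheme would close were it not for the term $\eps^2\pi_1(\bk)\pi_1'(\bk)\cdot D\,F^{env}(\eps,\pi_1(\bk)u)$, which loses one derivative. To deal with it, I would run the iterative scheme of Theorem \ref{th1} but control each iterate and the difference of two iterates in $\abs{\cdot}_*$ (and its higher order analogues obtained by differentiating \eqref{eqpolar} up to three times). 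This is exactly what Assumption \ref{assQL} is designed for: the $L^2$ pairing $\Re\bigl(\pi_1(\bk)\pi_1'(\bk)\cdot D\,F^{env}(\eps,v)\,u,u\bigr)$ is bounded by $\cst\,\abs{v}_{W^{1,\infty}}\abs{u}_*^2$, so the offending term becomes a $W^{1,\infty}$-controlled coefficient times $\abs{u}_*^2$; the embedding $B^{(3)}\hookrightarrow W^{1,\infty}$ (valid for $B=H^{t_0}$ with $t_0>d/2$) then closes the estimate. The other derivative-in-nonlinearity piece $-i\eps^2(\mathbf{b}\cdot\nabla)\pi_1(\bk)\,F^{env}$ is semilinear in this framework: since $\mathbf{b}\in\mathrm{Range}(B)$, the multiplier $\eps\mathbf{b}\cdot\nabla$ is controlled by $\abs{\cdot}_*$. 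A Gronwall argument then gives uniform bounds in $C([0,T/\eps];B^{(3)})$ for $\eps$ small enough.

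For step (b), I would write
$$
\pi_1(\bk+\eps D) = \bigl(1-i\eps\mathbf{b}\cdot\nabla-\eps^2\nabla\cdot B\nabla\bigr)^{-1}\bigl[\pi_1(\bk)+\eps\pi_1'(\bk)\cdot D - i\eps(\mathbf{b}\cdot\nabla)\pi_1(\bk)\bigr] + \eps^2 R_\eps,
$$
with $R_\eps$ a second order Fourier multiplier bounded uniformly in $\eps$, together with the analogous $O(\eps^2)$ approximation of $\frac{i}{\eps}(\omega_1(\bk+\eps D)-\omega)$ already used in \eqref{eqpade}. Inserting these expansions into \eqref{eqfull} shows that $u_{pol}$ solves \eqref{eqfull} modulo an additional forcing of size $O(\eps^2\abs{u_{pol}}_{B^{(3)}})$, hence contributing only $O(\eps)$ after integration on $[0,T/\eps]$.

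Step (c) is a stability estimate for $w=u_{pol}-u_{FD}$: the equation satisfied by $w$ is semilinear up to the same quasilinear term, again controlled by Assumption \ref{assQL}, and Gronwall's inequality delivers
$$
\sup_{t\in[0,T/\eps]}\abs{u_{pol}(t)-u_{FD}(t)}_{B^{(3)}}\leq \eps\,\mathfrak{c}_{imp}\,C(T,\abs{u^0}_{B^{(3)}}).
$$
Combining with Corollary \ref{corofull} and the Banach algebra embedding $B^{(3)}\hookrightarrow L^\infty$ yields the stated bound. The principal difficulty throughout is to ensure that the quasilinear right-hand side of \eqref{eqpolar} does not degrade either the iteration scheme of step (a) or the stability estimate of step (c); this is precisely what Assumption \ref{assQL} buys us.
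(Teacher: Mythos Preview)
Your strategy matches the paper's: the justification part (your steps (b)--(c)) is indeed ``as for the other models'' and the only new content is the local well-posedness on $[0,T/\eps]$, for which the paper, like you, uses the energy associated with $\abs{\cdot}_*$, Assumption \ref{assQL} for the quasilinear piece $\pi_1(\bk)\pi_1'(\bk)\cdot D\,F^{env}$, and the fact that $E^s(u)\gtrsim \abs{u}_{H^s}^2+\eps^2\abs{\mathbf{b}\cdot\nabla u}_{H^s}^2$ to absorb the $(\mathbf{b}\cdot\nabla)$ piece.

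Two points where the paper is a bit more careful than your sketch. First, the paper does not run a Picard iteration as in Theorem \ref{th1}; it derives an \emph{a priori} energy estimate $\frac{d}{dt}E^s(u)\le \eps\,C(\abs{u}_{W^{1,\infty}})E^s(u)$ and then invokes the classical machinery for quasilinear symmetric systems (existence, uniqueness, stability). A naive contraction mapping would lose one derivative precisely on the $\pi_1'(\bk)\cdot D$ term. Second, Assumption \ref{assQL} is an $L^2$ statement; to close in $H^s$ (hence in $B^{(3)}$) one must differentiate and write
\[
\Re\bigl(\pi_1(\bk)\pi_1'(\bk)\cdot{\bf e}_j\,\Lambda^s d_uF\,D_j u,\Lambda^s u\bigr)
=\Re\bigl(\ldots\, d_uF\,D_j\Lambda^s u,\Lambda^s u\bigr)
+\Re\bigl(\ldots\,[\Lambda^s,d_uF]D_j u,\Lambda^s u\bigr),
\]
the first term being controlled by Assumption \ref{assQL} and the commutator by Kato--Ponce and Moser. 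You implicitly assume this step but do not name it; it is what actually lets the $W^{1,\infty}$ coefficient survive the passage from $L^2$ to $H^s$.
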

\begin{remark}
We have introduced the variation of the polarization on the NLS equation with improved dispersion (\ref{eqpade}), but the two steps are independent (i.e., one can take ${\bf b}=0$, $B=0$ and $C_3(\nabla)=0$
in (\ref{eqpolar})).
\end{remark}
\begin{remark}
Note that in (\ref{eqpolar}), we have applied $\pi_1(\bk)$ to the full nonlinearity (hence the term $\pi_1(\bk)\pi_1'(\bk)\cdot D$ instead of $\pi_1'(\bk)\cdot D$). This means that we keep the effect of the frequency dependent polarization on the main polarized space Range$(\pi_1(\bk))$ only. Similarly, we have replaced $F^{env}(\eps,u)$ by $F^{env}(\eps,\pi_1(\bk)u)$. This latter substitution would not change anything to the previous NLS models since we have seen that polarized initial conditions remain polarized. Its purpose in (\ref{eqpolar}) is to make Assumption \ref{assQL} much easier to check.
\end{remark}
\begin{proof}
The justification of (\ref{eqpolar}) is performed as for the other models. The only difference here is that
local well-posedness for a time scale of order $O(1/\eps)$ must be established. We therefore show here that
such a local well-posedness result holds if $u_0\in H^{s+1}(\R^d)^n$ with $s>t_0+1$.
We just prove a priori energy estimates for (\ref{eqpolar}); existence, uniqueness and stability with respect
to perturbations can be deduced classically.\\
The natural energy associated to (\ref{eqpolar}) is given for all $s\geq 0$ by
$$
E^s(u)=\frac{1}{2}\big( (1-i\eps{\bf b}\cdot\nabla-\eps^2\nabla\cdot B\nabla)\Lambda^s u,\Lambda^s u\big)=\frac{1}{2}\abs{\Lambda^su}_*^2.
$$
Under the assumptions (\ref{hyp}) on $B$ and ${\bf b}$, $E^s(u)^{1/2}$ defines a norm that controls uniformly the $H^s$-norm. It may also control a certain number of first order derivatives. The important point for the local well-posedness
of (\ref{eqpolar}) is that it always controls first order derivatives in the direction ${\bf b}\cdot \nabla$; more precisely,
we claim that there exists $c>0$ independent of $\eps$ such that for all $u$, 
\begin{equation}\label{eneq}
E^s(u)\geq c\big(\abs{u}_{H^s}^2+\eps^2 \abs{{\bf b}\cdot \nabla u}_{H^s}^2\big).
\end{equation}
A quick look on the Fourier side shows that it is equivalent to prove that
$$
\forall X\in \R^d, \qquad 1+{\bf b}\cdot X+X\cdot BX\geq c(1+{\bf b}\cdot X)^2,
$$
which is a consequence of (\ref{hyp}).\\
Multiplying $\Lambda^s$(\ref{eqpolar}) by $\Lambda^s \bar u$ and integrating by parts, we get
\begin{eqnarray*}
\frac{d}{\rm dt}E^s(u)&=&-\eps^{1+p}\Re\big(\pi_1(\bk)A_0\Lambda^s u,\Lambda^s u\big)+
\eps \big(\pi_1(\bk)\Lambda^s F(u),\Lambda^s u\big)\\
& &+\eps^2 \sum_{j=1}^d\Re\big(\pi_1(\bk)\pi_1(\bk)'\cdot {\bf e}_j \Lambda^s d_u F D_j u,\Lambda^s u\big)\\
& &-\eps^2\Re\big(i\pi_1(\bk)\Lambda^s ({\bf b}\cdot \nabla)F(u),\Lambda^s u\big)\\
&:=& I_1+I_2+I_3+I_4.
\end{eqnarray*}
where we denoted $F(u)=F^{env}(\eps,\pi_1(\bk)u)$. It is straightforward to control $I_1$, and Moser's estimate
gives a control of $I_2$,
$$
I_1\lesssim \eps^{1+p}\abs{u}_{H^s}^2,\qquad I_2\leq \eps C(\abs{u}_\infty)\abs{u}_{H^s}^2.
$$
In order to control $I_3$, we must split it into two parts,
\begin{eqnarray*}
I_3&=&\eps^2 \sum_{j=1}^d\Re\big(\pi_1(\bk)\pi_1(\bk)'\cdot {\bf e}_j  d_u F D_j \Lambda^s u,\Lambda^s u\big)\\
& &+\eps^2 \sum_{j=1}^d\Re\big(\pi_1(\bk)\pi_1(\bk)'\cdot {\bf e}_j [\Lambda^s, d_u F] D_j u,\Lambda^s u\big);
\end{eqnarray*}
the first component is controlled using Assumption \ref{assQL} while the Kato-Ponce and Moser estimate give a
control of the second one,
$$
I_3\leq \eps^2 \cst \abs{u}_{W^{1,\infty}}\abs{u}_{H^s}^2+\eps^2C(\abs{u}_{W^{1,\infty}})\abs{u}_{H^s}^2.
$$
Remarking that $\Lambda^s({\bf b}\cdot \nabla) F(u)=\Lambda^s d_uF ({\bf b}\cdot \nabla)u$ and using the 
tame product estimate $\abs{fg}_{H^s}\lesssim \abs{f}_{L^\infty}\abs{g}_{H^s}+\abs{f}_{H^s}\abs{g}_{L^\infty}$
and Moser's inequality, we get 
$$
I_4\leq \eps C(\abs{u}_{W^{1,\infty}})(\abs{u}_{H^s}+\eps \abs{{\bf b}\cdot \nabla u}_{H^s})\abs{u}_{H^s}.
$$
Gathering all the above estimates and using (\ref{eneq}), we obtain
$$
\frac{d}{\rm dt}E^s(u)\leq \eps C(\abs{u}_{W^{1,\infty}})E^s(u).
$$
Since moreover $\abs{u}^2_{W^{1,\infty}}\lesssim E^s(u)$ for $s>d/2+1$, we deduce from Gronwall's estimate
that for such $s$, the energy $E^s(u)$ remains bounded for times of order $O(1/\eps)$.
\end{proof}
\begin{example}
In the framework of Examples \ref{exrad} and \ref{exrad2}, we can check that
 $v(t,x,z)=u(\eps t,x,z-\omega_1'(k)t)$ solves
\begin{eqnarray}
\nonumber
(1-i\eps {\bf b}\cdot \nabla -\eps^2\nabla\cdot  B\nabla)\partial_t v-\frac{i}{2}\big(\frac{\omega'_1(k)}{k}\Delta_\perp +\omega_1''(k)\dz^2\big) v+\eps^p\pi_1(\bk)A_0 v\\
\label{eqrad3}
=
 \big[\pi_1(\bk)+\eps  \pi_1'(\bk)\cdot D-i\eps ({\bf b}\cdot \nabla)\pi_1(\bk)\big] F^{env}(\eps, \pi_1(\bk)v).
\end{eqnarray}
\end{example}

\subsection{Including ionization processes} \label{sectNLSioniz}

\subsubsection{The profile equation} 

As in \S~\ref{profileeq}, we solve the initial value problem 
\eqref{eqgen2}-\eqref{CI}-\eqref{CIVW} 
for times of order $O(1/\eps)$, writing the solution 
under a profile form,
 
\begin{equation} \label{eq2bis}
(\bU,\bW)(t,x)=(U,W)
\left( t,x,\frac{\bk\cdot x-\omega t}{\eps} \right). 
\end{equation}
Again, $U(t,x,\theta)$ and $W(t,x,\theta)$ 
are periodic with respect to $\theta$, and we use any 
$\omega\in \R$. 
The action of the Fourier multiplier 
${\mathcal H} \left( \eps\frac{{\bf k}}{k^2}\cdot D \right)$ 
from \eqref{Max4} (and \eqref{Hilbert}) is transferred 
at the profile level into the operator ${\mathcal H}
\left( D_\theta+\eps\frac{{\bf k}}{k^2}\cdot D \right)$, 
with $k=|{\bf k}|$, 
$$
\left( {\mathcal H} 
\Big(\eps\frac{{\bf k}}{k^2}\cdot D \Big) \bU \right) 
(t,x) = \left( {\mathcal H}
\Big( D_\theta + \eps\frac{{\bf k}}{k^2}
\cdot D \Big) U \right) 
\left( t,x,\frac{\bk\cdot x-\omega t}{\eps} \right) , 
$$
where ${\mathcal H}
\left( D_\theta+\eps\frac{{\bf k}}{k^2}\cdot D \right)$ 
is the Fourier multiplier
$$
{\mathcal H} \Big( D_\theta+\eps\frac{{\bf k}}{k^2}
\cdot D \Big) \sum_{n\in\Z} u_n(x) e^{in\theta} = 
\sum_{n\in\Z} \left( {\mathcal H}
\Big(n+\eps\frac{{\bf k}}{k^2}\cdot D \Big) u_n \right)(x) 
e^{in\theta},
$$
which acts continuously on $H^k(\T,B)$, for any $k\in\N$.  
In order to get a solution to the original 
problem, it is sufficient that $(U,W)$ solves 
\begin{equation} \label{eq3bis}
\left\lbrace
\begin{array}{l}
\dsp \dt U + A(\partial)U 
+ \frac{i}{\eps}\cL(\omega D_\theta,\bk D_\theta)U 
+ \eps^{1+p} A_0 U = \vspace{1mm}\\ 
\qquad \qquad 
\eps F(\eps, U) 
- \eps  {\mathcal H}\big(D_\theta+\eps\frac{{\bf k}}{k^2}\cdot D\big) 
(W C_1^T C_1 U )
- \eps c \, C_1^T G(C_1 U,W), \vspace{1mm}\\
\dsp \dt W - \frac{i}{\eps} \omega D_\theta W
= \eps G(C_1U,W) \cdot C_1 U , 
\end{array}\right.
\end{equation}
with initial conditions
\begin{equation}\label{ICP}
\dsp (U,W)_\init(x,\theta)=
\left(u^0(x)e^{i\theta}+\cc,0\right).
\end{equation}
\begin{theorem} \label{th1bis}
Let $B=A^n\times A$, with $A=H^{t_0}(\R^d)$ or $A=W(\R^d)$, 
and $u^0 \in A^n$. 
Under Assumptions \ref{assu1}, \ref{assu2} and \ref{assuF}, 
there exists $T>0$ such that for all $0<\eps\leq1$ 
there exists a unique solution 
$\bZ=(\bU,\bW)\in C\left([0,T/\eps];B\right)$ 
to \eqref{eqgen2}-\eqref{CI}-\eqref{CIVW}. 
Moreover, one can write $\bZ$ under the form
$$
\bZ(t,x)=Z\left(t,x,\frac{\bk\cdot x-\omega t}{\eps}\right),
$$
where $Z=(U,W)$ solves the \emph{profile equation} (\ref{eq3bis}), 
with the initial condition \eqref{ICP}.
\end{theorem}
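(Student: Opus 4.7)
The plan is to mirror the proof of Theorem~\ref{th1} and extend it to accommodate the coupling to $\bW$ and the nonlocal operator $\mathcal{H}$. First I would verify, by direct computation, that with the ansatz \eqref{eq2bis} and the intertwining identity for the Fourier multiplier $\mathcal{H}$ recalled just before the statement, the profile $(U,W)$ solves \eqref{eq3bis}-\eqref{ICP} if and only if $(\bU,\bW)$ defined by \eqref{eq2bis} solves \eqref{eqgen2}-\eqref{CI}-\eqref{CIVW}. It is therefore enough to solve the profile system.

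Next I would work in the Banach algebra $\mathcal{X}^k := H^k(\T;A^n)\times H^k(\T;A)$, with $k\geq 1$ and $A=H^{t_0}(\R^d)$ or $A=W(\R^d)$ (so that $B=A^n\times A$), and show that the linear part of \eqref{eq3bis},
\[
\mathcal{S}(t)=\exp\Bigl(-tA(\partial)-\tfrac{i}{\eps}t\,\cL(\omega D_\theta,\bk D_\theta)\Bigr)\oplus\exp\Bigl(\tfrac{i}{\eps}t\,\omega D_\theta\Bigr),
\]
is a unitary group on $\mathcal{X}^k$: the first factor by Assumption~\ref{assu1}(i)--(ii) (exactly as in Theorem~\ref{th1}), and the second because $\tfrac{i}{\eps}\omega D_\theta$ generates the translation group in $\theta$.

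I would then set up the Duhamel/Picard scheme
\[
Z^{l+1}(t) = \mathcal{S}(t)Z^0 + \eps\int_0^t \mathcal{S}(t-t')\,\mathcal{N}(\eps,Z^l)(t')\,{\rm d}t',
\]
with $Z^0=(u^0(x)e^{i\theta}+\cc,0)$ and $\mathcal{N}$ collecting all the subprincipal terms of \eqref{eq3bis}, namely $F(\eps,U)-\eps^p A_0U$, the coupling $-\mathcal{H}\bigl(D_\theta+\eps\tfrac{\bk}{k^2}\cdot D\bigr)(W\,C_1^T C_1 U)$, the ionization source $-c\,C_1^T G(C_1 U,W)$, and the $\bW$-equation source $G(C_1U,W)\cdot C_1U$. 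The iteration will close on $C([0,T/\eps];\mathcal{X}^k)$ for some $T>0$ independent of $\eps$ provided that each constituent of $\mathcal{N}$ maps $\mathcal{X}^k$ into itself with a uniform-in-$\eps$ local Lipschitz constant. The $F$ and $A_0$ contributions are handled by Assumption~\ref{assuF} and linearity, exactly as in Theorem~\ref{th1}. The $\mathcal{H}$-term is handled by noticing that the symbol $\mathcal{H}(\xi)$ from \eqref{Hilbert} is bounded by $\sqrt{2}$ uniformly in its argument, so the multiplier $\mathcal{H}\bigl(D_\theta+\eps\tfrac{\bk}{k^2}\cdot D\bigr)$ acts boundedly on $H^k(\T;A)$ with operator norm independent of $\eps$; it is then composed with the product $W\,C_1^T C_1 U$, which is controlled by the Banach algebra property of $\mathcal{X}^k$. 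The $G$-terms are polynomial by Assumption~\ref{assu2}(iii) and are again controlled by the Banach algebra property (in the Wiener-algebra case one uses that $K\in\N$, so $G$ is polynomial and hence analytic, as in the example following Assumption~\ref{assuF}). The $\eps$ prefactor in front of $\mathcal{N}$ and in the Duhamel integral then produces a strict contraction on $C([0,T/\eps];\mathcal{X}^k)$ for $T$ small but independent of $\eps$; uniqueness follows from a Gronwall estimate on the difference of two solutions in $\mathcal{X}^k$.

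The main technical point, and essentially the only new ingredient beyond Theorem~\ref{th1}, is to check that the two new features — the $\eps$-dependent Fourier multiplier $\mathcal{H}\bigl(D_\theta+\eps\tfrac{\bk}{k^2}\cdot D\bigr)$ and the polynomial coupling $G$ together with the auxiliary scalar equation on $\bW$ — fit into the iteration with uniform-in-$\eps$ estimates. For $\mathcal{H}$ this uses only the uniform $L^\infty$ bound on the symbol, with no commutator estimate needed; for $G$ it uses the explicit polynomial form from Assumption~\ref{assu2}(iii) together with the fact that the $\bW$-equation is itself semilinear with an $O(\eps)$ source, so that $W^l$ stays bounded in $H^k(\T;A)$ on the same time scale. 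Once these two points are in place, the rest of the argument is a verbatim repetition of the proof of Theorem~\ref{th1}.
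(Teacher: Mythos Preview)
Your proposal is correct and follows essentially the same approach as the paper: the paper's proof consists of a single sentence referring to the iterative scheme of Theorem~\ref{th1} in $H^k(\T;B)$, $k\ge1$, and you have simply supplied the details of why the two new ingredients (the uniformly bounded Fourier multiplier $\mathcal{H}\bigl(D_\theta+\eps\tfrac{\bk}{k^2}\cdot D\bigr)$ and the polynomial coupling $G$ with the auxiliary transport equation on $W$) fit into that same fixed-point argument with $\eps$-independent constants.
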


\begin{proof}
Similar to the one of Theorem~\ref{th1},  
by an iterative scheme in $H^{k}(\T;B)$, 
with $k\geq1$.  
\end{proof}

\subsubsection{The slowly varying envelope approximation} 
In this section, as in in section~\ref{sectSVEA}, we shall 
assume that $\omega=\omega_1(\bk)$ is some characteristic 
frequency for $\cL(\cdot,\bk)$. Postulating the Ansatz 
\begin{equation}\label{apenvbis}
Z(t,x,\theta) \sim 
\left( u_{env}(t,x)e^{i\theta}+\cc , w_{env}(t,x) \right),
\end{equation}
we obtain formally the following system 
for $(u_{env},w_{env})$ (denoted $(u,w)$), 
\begin{equation}
\label{eq18bis}
\left\lbrace
\begin{array}{l}
\dsp \dt u + \frac{i}{\eps}\cL(\omega,\bk+\eps D) u
+ \eps^{1+p} A_0 u = \vspace{1mm}\\ 
\qquad \qquad \qquad \eps F^{env}(\eps, u) 
- \eps \, i w \, C_1^T C_1 u 
- \eps c \, C_1^T G^{env}(C_1 u,w) , \vspace{1mm} \\
\dsp \dt w = 2\eps G^{env}(C_1 u,w) \cdot \overline{C_1 u}, 
\end{array}\right.
\end{equation}
where we used that 
${\mathcal H} \Big( 1+\eps\frac{{\bf k}}{k^2} \cdot D \Big) 
= i + O(\eps)$. 
Here, $F^{env}$ is given by \eqref{defFenv} and $G^{env}$ 
is defined in the same way, filtering oscillations, 
\begin{align*}
G^{env}(u,w)&=\frac{1}{2\pi}
\int_0^{2\pi}e^{-i\theta}G(u e^{i\theta}+\cc,w) \, {\rm d}\theta
\end{align*}
\begin{remark} \label{rmkNLty}
The equation one obtains for $w$ from direct computations is actually
\begin{align*}
\dt w &= \frac{\eps}{2\pi}
\int_0^{2\pi}G(C_1 u e^{i\theta}+\cc,w) 
\cdot (C_1 u e^{i\theta}+\cc) \, {\rm d}\theta \\
& = c_1 \eps \Big( 2^K |C_1 u|^{2K} + 
2 \sum_{k=1}^{\lfloor K/2 \rfloor} \begin{pmatrix} K-1 \\ k \end{pmatrix} 
\begin{pmatrix} K-k \\ k \end{pmatrix} (2|C_1 u|^2)^{K-2k} |C_1 u \cdot C_1 u|^{2k} \Big) \\
& \hspace{9cm} + 2 c_2 \eps w |C_1 u|^2 \\
& = 2 \eps G^{env}(C_1 u,w) \cdot \overline{C_1 u},
\end{align*}
in view of Assumption~\ref{assu2}. 
\end{remark}

The approximation \eqref{apenvbis} is justified in the following theorem.
\begin{theorem}\label{th3bis}
Let $B=A^n\times A$, with $A=H^{t_0}(\R^d)$ or $A=W(\R^d)$. 
Let $u^0\in A^n$, with $\nabla u^0\in A^{nd}$,
and $r \in A^n$. 
Let Assumptions \ref{assu1}, \ref{assuF} and \ref{assu3} 
be satisfied and assume moreover that $\omega\neq0$ and that 
$u^0=\pi_1(\bk)u^0+\eps r$. Then
\item[(i)] There exist $T>0$ and, for all $\eps\in(0,1]$, 
a unique solution $(u,w) \in C([0,T/\eps];B^{(1)})$ 
to \eqref{eq18bis} with initial condition $(u^0,0)$.
\item[(ii)] There exists $\eps_0>0$ such that 
for all $0<\eps<\eps_0$, the solution $\bZ$ to \eqref{eq3bis} 
provided by Theorem \ref{th1bis} exists on $[0,T/\eps]$ and
$$
\abs{\bZ-\bZ_{SVEA}}_{L^\infty([0,T/\eps]\times\R^d)}
\leq \eps C(T,\abs{u^0}_{A})(1+\abs{\nabla u^0}_{A} 
+ \abs{r}_{A}),
$$
where $\dsp \bZ_{SVEA}(t,x) = 
\left(u(t,x) e^{i\frac{\bk\cdot x-\omega t}{\eps}}+\cc, 
w(t,x)\right)$.
\end{theorem}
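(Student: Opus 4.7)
The plan is to follow the six-step scheme of Theorem~\ref{th3}, enriched by a separate treatment of the charge density $W$ that exploits crucially the hypothesis $\omega\neq 0$. The profile $Z=(U,W)\in C([0,T/\eps]; H^k(\T; A^n\times A))$ is provided by Theorem~\ref{th1bis}. Local well-posedness of the envelope system \eqref{eq18bis} (part (i)) is obtained by an analogous Picard fixed-point argument: the operator $\exp(-\tfrac{it}{\eps}\cL(\omega,\bk+\eps D))$ is unitary on $A^n$ by Assumption~\ref{assu1}; the $w$-equation is scalar and merely forced; and every nonlinear or coupling contribution carries an explicit $\eps$-factor, which produces the time scale $T/\eps$ independent of $\eps$.

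For part (ii), I would write the profile as $U=u e^{i\theta}+\cc+\eps V$ and $W=w+\eps R$, where $(u,w)$ is the solution to \eqref{eq18bis}. Plugging into \eqref{eq3bis} and subtracting \eqref{eq18bis} yields an evolution system for $(V,R)$ whose source is supported on the $\theta$-modes not captured by the SVEA ansatz, with initial data $V\init=r e^{i\theta}+\cc$ and $R\init=0$. To bound $V$ I would reproduce Steps~2--5 of Theorem~\ref{th3}: split $u=\pi_1(\bk+\eps D)u+u_{II}$, use the polarization hypothesis $u^0-\pi_1(\bk)u^0=\eps r$ together with a non-stationary phase argument based on Assumption~\ref{assu3}(ii) to show $\eps^{-1}\abs{u_{II}}_A=O(1)$ uniformly on $[0,T/\eps]$, and use the non-resonance condition~(iii) to tame the third and higher odd harmonics generated by $F$. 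The Hilbert-type multiplier ${\mathcal H}(D_\theta+\eps\tfrac{\bk}{k^2}\cdot D)$ is uniformly bounded on $H^k(\T;B)$ in $\eps$ and commutes with the decomposition in $\theta$-modes, so the additional sources coming from ${\mathcal H}(\cdots)(WC_1^T C_1 U)$ and $C_1^T G(C_1 U,W)$ are handled like the other nonlinear sources, using the explicit polynomial structure of $G$ (Assumption~\ref{assu2}).

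The genuinely new ingredient is the control of $R$. The evolution of the $n$-th $\theta$-Fourier coefficient $R_n$ reads $\dt R_n-\tfrac{in\omega}{\eps}R_n=\sigma_n(t)$, where for $n\neq0$ the source $\sigma_n(t,x)$ is the $n$-th harmonic of $G(C_1 U,W)\cdot C_1 U$; it is uniformly bounded in $A$, and $\partial_t\sigma_n$ is uniformly bounded in $A$ because $\partial_t u$ and $\partial_t w$ are $O(1)$ once $u_{II}=O(\eps)$ has been established. Since $R_n(0)=0$, the Duhamel formula combined with one integration by parts against the rapidly oscillating factor $e^{in\omega s/\eps}$ gives
\begin{equation*}
R_n(t)=\frac{\eps}{in\omega}\bigl[e^{in\omega t/\eps}\sigma_n(0)-\sigma_n(t)\bigr]+\frac{\eps}{in\omega}\int_0^t e^{in\omega(t-s)/\eps}\,\partial_s\sigma_n(s)\,ds,
\end{equation*}
which, after summing in $n$, yields a bound on $\abs{R}_{H^1(\T;A)}$ uniform in $\eps$ over $[0,T/\eps]$. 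The zero mode $R_0$ vanishes identically by construction of the $w$-equation. The hypothesis $\omega\neq 0$ is precisely what makes this non-stationary phase estimate work.

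The main technical obstacle will be the joint closure of the estimates on $V$ and $R$, which are coupled at order $\eps$ through the nonlinearity and through the Hilbert-type term ${\mathcal H}(\cdots)(WC_1^T C_1 U)$. A key point of the bookkeeping is that the time differentiation of $\sigma_n$ in the integration by parts above must not reintroduce a factor $1/\eps$: this is the case because on the polarized mode $u_1=\pi_1(\bk+\eps D)u$ the singular operator $\tfrac{1}{\eps}\cL(\omega,\bk+\eps D)$ collapses to $O(1)$ via $\omega_1(\bk+\eps D)-\omega=O(\eps)$, and the non-polarized piece $u_{II}$ is itself $O(\eps)$. Once bounds $\abs{V}_{H^1(\T;A^n)}+\abs{R}_{H^1(\T;A)}=O(1)$ are established on $[0,T/\eps]$, the final error estimate follows from the embedding $H^1(\T;B)\hookrightarrow L^\infty(\R\times\R^d)$ as in Step~6 of the proof of Theorem~\ref{th3}.
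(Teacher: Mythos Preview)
Your overall architecture is correct and matches the paper's proof, but two of your claims about the $R$-equation are wrong as stated and must be repaired before the argument closes.

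\medskip

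\textbf{The zero mode $R_0$ does not vanish.} You write ``The zero mode $R_0$ vanishes identically by construction of the $w$-equation.'' This is false. The $w$-equation in \eqref{eq18bis} is defined so that $\partial_t w$ equals the $\theta$-average of $G(C_1 U_{\rm app},W_{\rm app})\cdot C_1 U_{\rm app}$, \emph{not} of $G(C_1 U,W)\cdot C_1 U$. Hence
\[
\partial_t R_0 \;=\; \big[G(C_1 U,W)\cdot C_1 U\big]_0 - \big[G(C_1 U_{\rm app},W_{\rm app})\cdot C_1 U_{\rm app}\big]_0,
\]
which is $O\bigl(\eps(\abs{V}+\abs{R})\bigr)$ but not zero. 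This term has to be carried into the Gronwall argument; the paper handles it exactly this way (their bound $\abs{W|C_1U|^2-W_{\rm app}|C_1U_{\rm app}|^2}\le \eps C\abs{\tilde Z}$).

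\medskip

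\textbf{The integration by parts for $n\neq 0$ is circular as written.} You define $\sigma_n$ as the $n$-th harmonic of $G(C_1 U,W)\cdot C_1 U$ with the \emph{full} profile $(U,W)=(U_{\rm app}+\eps V,\,W_{\rm app}+\eps R)$, and then claim $\partial_t\sigma_n=O(1)$ ``because $\partial_t u$ and $\partial_t w$ are $O(1)$.'' But $\sigma_n$ also depends on $(V,R)$, and you have no a~priori control of $\partial_t V$ (the $V$-equation carries a singular $\eps^{-1}\cL$ term). The paper avoids this by first splitting the source for $\tilde W$ into a piece of size $\eps C\abs{\tilde Z}$ (fed to Gronwall) and an oscillating remainder $2w\,(C_1 u)\cdot(C_1 u)\,e^{2i\theta}+\cc$ that depends \emph{only} on $(u,w)$; only then do they integrate by parts against $e^{-2it'\omega/\eps}$, using the boundedness of $\partial_t u_1$ and $\partial_t w$ (and $u_{II}=O(\eps)$). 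You must make this same split before integrating by parts.

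\medskip

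Once you fix these two points and close with Gronwall on $\abs{V}_{H^1(\T;A^n)}+\abs{R}_{H^1(\T;A)}$, your proof coincides with the paper's.
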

\begin{proof}
The arguments are similar to the ones in the proof 
of Theorem~\ref{th3}. \\
\emph{Step 1.} As in the proof of Theorem~\ref{th1} 
and Theorem~\ref{th1bis}, we have local in time $T/\eps$ 
(with $T$ independent of $\eps$) existence of $(u,w)$, 
solution to \eqref{eq18bis},
together with (uniform w.r.t. $\eps$) bounds. \\
\emph{Step 2.} Decomposing $u=u_1+u_{II}$ as in Step 2 of 
the proof of Theorem~\ref{th3}, one obtains in the same way 
that $\abs{\dt u_1}_{L^\infty([0,T/\eps],A^n)}$ is bounded 
w.r.t. $\eps$. This is also the case for 
$\abs{\dt w}_{L^\infty([0,T/\eps],A)}$ (even of order $O(\eps)$), 
as shows directly the third equation in \eqref{eq18bis}. \\
\emph{Step 3.} As in Step 4 of the proof of Theorem~\ref{th3},
we deduce that 
$$
\dsp \frac{1}{\eps} \abs{u_{II}}_{L^\infty([0,T/\eps],A)} \leq 
C \left( T,\abs{u^0}_{A} \right) 
\left( 1 + \abs{\nabla u^0}_{A} + \abs{r}_{A} \right). 
$$
This is obtained by integration by parts 
in the integral formulation giving the $u_j's$ of $u_{II}$, 
using Step 2 and Assumption~\ref{assu3} to have non-stationary 
phase; we conclude by a Gronwall argument. \\
\emph{Step 4.} Approximation of $\bZ$ by $\bZ_{SVEA}$. 
Compared to Theorem \ref{th3}, the new point 
is the component $w$; for the sake of simplicity, we therefore set
$F=0$ throughout this proof. 
Denote 
$$\dsp Z_{\rm app}(t,x,\theta) =(U_{\rm app},W_{\rm app})(t,x,\theta) := 
\left(u(t,x) e^{i\theta}+\cc, w(t,x)\right), \quad 
\eps \widetilde Z = Z-Z_{\rm app},
$$
where $Z=(U,W)$ is the solution to the profile equation \eqref{eq3bis} 
provided by Theorem~\ref{th1bis}. 
We estimate $\widetilde Z = (\widetilde U,\widetilde W)$ 
in $H^{k}(\T;B)$ (defined in \eqref{defHk}), $k\geq1$. \\
$\bullet$ For $\widetilde W$. We have $\widetilde W_\init=0$ 
and 
\begin{equation*}
\begin{split}
\dt \widetilde W + \frac{i}{\eps} \omega D_\theta \widetilde W 
& = G(C_1 U,W) \cdot {C_1 U} 
- 2 G^{env}(C_1 u,w) \cdot \overline{C_1 u}  \\
& = c_1 \sum_{k\neq0} 
\left( \frac{1}{2\pi} \int_0^{2\pi} e^{-i\theta} G(C_1 U,W) \cdot C_1 U {\rm d}\theta \right) 
e^{ik\theta} \\
& \quad + c_2 W |C_1 U|^2  - 2 c_2 w |C_1 u|^2 .
\end{split}
\end{equation*}
The terms in $c_1$ and $c_2$ can be treated similarly. For the sake of
clarity, we therefore set $c_1=0$ and $c_2=1$ in this proof, so that the
right-hand side is given by
\begin{equation*}
W |C_1 U|^2 - W_{\rm app} |C_1 U_{\rm app}|^2  + 2 w\left( (C_1 u) \cdot (C_1 u) e^{2i\theta}+\cc \right). 
\end{equation*}
Since
$$
\big\vert{W |C_1 U|^2 -  W_{\rm app} |C_1 U_{\rm app}|^2
}\big\vert\leq \eps C(T,\abs{u^0}_A) \abs{\widetilde{Z}},
$$
we easily deduce that
\begin{eqnarray*}
\abs{\widetilde W(t)}_{H^{k}(\T;A)} \leq \eps C(T,\abs{u^0}_A)\int_0^t
\abs{\widetilde Z(t')}_{H^{k}(\T;A)} {\rm d}t'  
+ 2\Big\vert \int_0^t e^{-2it'\frac{\omega}{\eps}}
w (C_1 u) \cdot (C_1 u) \, {\rm d}t' \Big\vert_A .
\end{eqnarray*}
Splitting $u=u_1+u_{II}$ as in Theorem \ref{th3}, we have a uniform (in $\eps$) bound 
for $\dsp \frac{1}{\eps} 
\abs{w (C_1 u) \cdot (C_1 u_{II})}_{L^\infty([0,T/\eps],A^n)}$
by Step 1 and Step 3. The only component left to control is therefore
the one involving the product $(C_1 u_1) \cdot (C_1 u_1)$, for which we write
\begin{align*}
\int_0^t e^{-2it'\frac{\omega}{\eps}}
w (C_1 u_1) \cdot (C_1 u_1)
=&-i\frac{\eps}{2\omega}\int_0^t e^{-2it'\frac{\omega}{\eps}}\dt\big[w
(C_1 u_1) \cdot (C_1 u_1)\big] {\rm d}t' \\
&+i\frac{\eps}{2\omega} e^{-2it\frac{\omega}{\eps}}\big[w (C_1 u_1) \cdot (C_1 u_1)\big](t).
\end{align*}
Using the equation satisfied by $w$ to control $\dt w$ and Step 2 to
control $\dt u_1$, one readily deduces that
\begin{equation} \label{estimWtilde}
\abs{\widetilde W(t)}_{H^{k}(\T;A)} \leq 
C\left(T,\abs{u^0}_{A}\right) 
\left( 1+\abs{\nabla u^0}_{A} 
+ \abs{r}_{A} 
+
\eps\int_0^t \abs{\widetilde Z(t')}_{H^{k}(\T;A)} 
{\rm d}t'\right).
\end{equation}
$\bullet$ For $\widetilde U$. We have $\widetilde U_\init=0$ 
and  
\begin{eqnarray*}
\nonumber
\dsp \dt \widetilde U 
+ \frac{i}{\eps}\cL(\omega D_\theta,\bk D_\theta+\eps D) \widetilde U
+ \eps^{1+p} A_0 \widetilde U = 
\dsp \hfill 
- \Big( {\mathcal H} 
\big( D_\theta+\eps\frac{{\bf k}}{k^2}\cdot D \big) - i \Big) 
(W C_1^T C_1 U) \\
\dsp \hfill 
- i \, (W_{\rm app}+\eps\widetilde W) C_1^T C_1 (U_{\rm app}+\eps\widetilde U) 
+ i \, W_{\rm app} C_1^T C_1 U_{\rm app},
\end{eqnarray*}
where, for the sake of simplicity, we also have taken $c=0$ because
the corresponding terms do not raise any difficulty.
Since the 
Fourier multiplier 
${\mathcal H} \Big( D_\theta+\eps\frac{{\bf k}}{k^2}\cdot D \Big) - i$ 
has a norm of order $\eps$ when acting on $A$ and because the
difference of the last two terms in easily bounded in terms of $\eps\tilde
Z$, we get that $\abs{\widetilde U(t)}_{H^{k}(\T;A)}$ satisfies the
same upper bound as $\abs{\widetilde W(t)}_{H^{k}(\T;A)}$ in \eqref{estimWtilde}.
\medbreak

Gathering the upper bounds for $\widetilde{W}$ and $\widetilde{U}$ we therefore get
\begin{align*}
\sup_{t'\in[0,t]}  \abs{\widetilde Z(t')}_{H^{k}(\T;A)} 
& \leq 
C \left(T,\abs{u^0}_{A}\right) 
\left( 1+\abs{\nabla u^0}_{A} 
+ \abs{r}_{A} \right) \\
&
+ \, \eps \, C\left(T,\abs{u^0}_{A}\right) \int_0^t 
\sup_{t''\in[0,t']} \abs{\widetilde Z(t'')}_{H^{k}(\T;A)} 
{\rm d}t' .
\end{align*} 
By a Gronwall estimate, we finally obtain 
a bound on $\widetilde Z$, 
$$
\sup_{t\in[0,T/\eps]} \abs{\widetilde Z(t)}_{H^{k}(\T;B)}
\leq C \left( T,\abs{u^0}_{A} \right) 
\left( 1+\abs{\nabla u^0}_{A}+\abs{r}_{A} \right) , 
$$
which, since the $A-$ norm controls the $L^\infty-$ norm, 
immediately leads to the desired estimate on 
$\abs{\bZ-\bZ_{SVEA}}_{L^\infty([0,T/\eps]\times\R^d)}$. 
\end{proof}

\subsubsection{The nonlinear Schr\"odinger equation with ionization} 

As in \S~\ref{sectAs32}, a Schr\"odinger type equation can be derived
in presence of ionization, 
\begin{equation}\label{NLSpbis}
\left\lbrace\begin{array}{l}
\dsp \partial_t u + \cg\cdot\nabla u 
- \eps \frac{i}{2}\nabla\cdot H_\bk\nabla u 
+ \eps^{1+p} \pi_1(\bk) A_0 u = \\
\qquad \qquad \qquad \quad 
\eps \pi_1(\bk) \left( F^{env}(\eps, u) - i w C_1^T C_1 u 
- c \, C_1^T G^{env}(C_1 u,w) \right) , \vspace{1mm} \\
\dsp \dt w = 2 \eps G^{env}(C_1 u,w) \cdot \overline{C_1 u} .
\end{array}\right.
\end{equation}
Using Theorem \ref{th3bis}, this model can be justified with a
straightforward adaptation of the case without ionization.
\begin{coro}[Schr\"odinger approximation]\label{coroschrodbis}
Under the assumptions of Theorem \ref{th3bis}, one has for all 
$u^0 \in A^{(3)}$ such that $u_0=\pi_1(\bk)u_0$:
\item[(i)] There exists $T>0$ and, for all $\eps\in(0,1]$, 
a unique solution $(u,w) \in C([0,T/\eps];B^{(3)})$ 
to \eqref{NLSpbis} with initial condition $(u^0,0)$. 
\item[(2)]  There exists $\eps_0>0$ and ${\mathfrak c}_{NLS}>0$ 
such that for all $0<\eps<\eps_0$, the solution $\bZ$ to 
\eqref{eq3bis} provided by Theorem \ref{th1bis} 
exists on $[0,T/\eps]$ and
$$
\abs{\bZ-\bZ_{NLS}}_{L^\infty([0,T/\eps]\times\R^d)}
\leq \eps C \left( T,\abs{u^0}_A \right) 
\left(1+\abs{\nabla u^0}_A 
+ {\mathfrak c}_{NLS}\abs{u^0}_{A^{(3)}} \right),
$$
where $\bZ_{NLS}(t,x) = 
\left( u(t,x)e^{i\frac{\bk\cdot x-\omega t}{\eps}}+\cc,w(t,x) \right)$.
\end{coro}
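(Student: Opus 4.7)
As indicated in the paragraph preceding the statement, I would mirror the proof of Corollary~\ref{coroschrod}, using Theorem~\ref{th3bis} in place of Theorem~\ref{th3}. The scheme has three stages: (i) solve \eqref{NLSpbis} on $[0,T/\eps]$ with $T$ independent of $\eps$; (ii) compare the NLS solution $(u_N,w_N)$ with the SVEA solution $(u_E,w_E)$ of \eqref{eq18bis} provided by Theorem~\ref{th3bis}; (iii) combine the two by a triangle inequality.

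\textbf{Existence.} After the rescaling $\tau = \eps t$, the system \eqref{NLSpbis} becomes a semilinear Schr\"odinger equation for $u$ coupled to a time-ODE for $w$ with bounded coefficients. The linear propagator $\exp(-\tau[\cg\cdot\nabla-\eps\tfrac{i}{2}\nabla\cdot H_\bk\nabla])$ is unitary on each $A^{(k)}$, and the right-hand side preserves $A^{(3)}$ while being locally Lipschitz there, thanks to Assumption~\ref{assuF}, the explicit polynomial structure of $G^{env}$ fixed by Assumption~\ref{assu2}, and the Banach algebra property of $A$. A Picard iteration yields a solution on $[0,T/\eps]$ with $T$ independent of $\eps$; the identity $u=\pi_1(\bk)u$ is propagated because $\pi_1(\bk)$ commutes with the linear flow and each nonlinear term in \eqref{NLSpbis} is premultiplied by $\pi_1(\bk)$.

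\textbf{Consistency and stability.} Only the $u$-blocks of \eqref{eq18bis} and \eqref{NLSpbis} differ. Writing the Taylor expansions
\[
\frac{i}{\eps}\cL(\omega,\bk+\eps D) = \cg\cdot\nabla - \eps\frac{i}{2}\nabla\cdot H_\bk\nabla + \eps^2 R_1(\eps D), \qquad \pi_1(\bk+\eps D)=\pi_1(\bk)+\eps R_2(\eps D),
\]
where $R_1,R_2$ are bounded Fourier multipliers whose symbols carry third- and first-order derivatives respectively, and setting $\delta u := u_E-u_N$, $\delta w := w_E - w_N$, one finds that $\delta u$ satisfies the linear part of \eqref{NLSpbis} with a right-hand side consisting of $O(\eps)$ terms Lipschitz in $(\delta u,\delta w)$ plus a consistency residual of size $\eps^2\,C(\abs{u_E}_{A^{(3)}})$; the contribution of the unpolarized piece $(\Id-\pi_1(\bk))u_E$ is only $O(\eps)$ thanks to Step~3 of the proof of Theorem~\ref{th3bis}. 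The $w$-equations have the same functional form in $(u,w)$, so $\delta w$ satisfies an ODE whose source is $\eps$-Lipschitz in $(\delta u,\delta w)$. A Gronwall estimate in $A$ over the rescaled interval of length $T$ then yields
\[
\sup_{t\in[0,T/\eps]}\bigl(\abs{\delta u(t)}_A+\abs{\delta w(t)}_A\bigr) \leq \eps\,C(T,\abs{u^0}_A)\bigl(1+\abs{\nabla u^0}_A+{\mathfrak c}_{NLS}\abs{u^0}_{A^{(3)}}\bigr),
\]
the third-derivative contribution being absorbed in ${\mathfrak c}_{NLS}$ (the norm of $R_1$). Combined with the estimate of Theorem~\ref{th3bis}, this closes step (iii).

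\textbf{Main obstacle.} The delicate point is to propagate $A^{(3)}$ bounds on $(u_E,w_E)$ uniformly in $\eps$ on the long time $[0,T/\eps]$, since the consistency remainder feeds back three spatial derivatives of $u_E$. For $u_E$ this requires rerunning the proof of Theorem~\ref{th3bis} at regularity $A^{(3)}$, relying on tame Moser estimates for $F^{env}$ and $G^{env}$ and the Banach algebra property; for $w_E$ the $\eps$ factor in its equation makes the three-derivative bound immediate over the time scale $O(1/\eps)$. Once these uniform bounds are in hand, the Gronwall closure and the triangle inequality giving the final error are routine adaptations of the no-ionization case.
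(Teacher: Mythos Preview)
Your proposal is correct and follows exactly the route the paper intends: the paper's own proof consists of the single sentence ``Using Theorem~\ref{th3bis}, this model can be justified with a straightforward adaptation of the case without ionization,'' and you have spelled out precisely that adaptation (existence by Picard, consistency via Taylor expansion of the dispersion, stability by Gronwall, then triangle inequality with Theorem~\ref{th3bis}). One notational slip: the expansion you write for $\tfrac{i}{\eps}\cL(\omega,\bk+\eps D)$ is really the expansion of $\tfrac{i}{\eps}(\omega_1(\bk+\eps D)-\omega)$ on the range of $\pi_1(\bk+\eps D)$, since $\cL$ itself is affine in $D$; your handling of the unpolarized remainder via Step~3 is exactly what makes this reduction legitimate.
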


\subsubsection{The most general model}

In \eqref{NLSpbis}, ionization effects have been added to the standard cubic
NLS equation. It is of course possible to add them to a most
sophisticated model that takes into account more general
nonlinearities, improved frequency dispersion and frequency dependent
polarization. We then obtain the following generalization of
\eqref{NLSpbis} (for which the same justification as in Corollary
\ref{coroschrodbis} holds),
\begin{equation}
	\label{eqpolarioniz}
	\left\lbrace
	\begin{array}{l}
	\dsp \big(1-i\eps{\bf b}\cdot\nabla-\eps^2\nabla\cdot B\nabla\big)
	\dt u\\
	\dsp\indent
	+\cg\cdot\nabla u-\eps\frac{i}{2} \nabla\cdot \big( H_\bk  +2 
        \cg\otimes{\bf b} \big)\nabla u+\eps^2 C_3(\nabla) u
	+ \eps^{1+p} \pi_1(\bk) A_0 u\\
        \dsp \indent\indent =\eps  \big[\pi_1(\bk)+\eps
        \pi_1(\bk)\pi_1'(\bk)\cdot D-i\eps ({\bf b}\cdot
        \nabla)\pi_1(\bk)\big] \Big( F^{env}(\eps, \pi_1(\bk)u) \\
\dsp \hspace{5cm} - \left( i w C_1^T C_1 u 
+ c \, C_1^T G^{env}(C_1 u,w) \right) \Big) , \\
\dsp \dt w = 2 \eps G^{env}(C_1 u,w) \cdot \overline{C_1 u} .
	\end{array}\right.
\end{equation}
As shown in Appendix \ref{APPmg}, this system of equations takes the following
form corresponding to  (\ref{NLSfamily2}) in the case of Maxwell's equations,
\begin{equation} \label{NLSfamily2gen}
\left\lbrace
\begin{array}{l} 
\!\!\!\dsp i (P_2(\eps\nabla)\dt + c_g\dz ) u
+\eps (\Delta_\perp \!+\!\alpha_1\dz^2) u + i \eps \alpha_2 u \vspace{1mm} \\
\qquad \qquad + \eps (1 + i \eps {\bf c_3}\cdot\nabla) [(\abs{u}^2\! -\!\rho) u  
+ i c (\alpha_4 \abs{u}^{2K-2}\! u+\alpha_5\rho u)] = 0 , \vspace{1mm} \\
\!\!\!\dsp \dt \rho = \eps \alpha_4 \abs{u}^{2K}+\eps \alpha_5 \rho \abs{u}^2.
\end{array}\right.
\end{equation}
\section{Analysis of the  equations \eqref{NLSfamily1} 
and (\ref{NLSfamily2}), and open problems}\label{sec:openproblem}

In this section, we analyze and formulate open problems for the NLS-type equations (\ref{NLSfamily1}), as well as the NLS-type equations (\ref{NLSfamily2}) taking the ionization processes into account. We first consider in section \ref{sec:rugby1} the equations \eqref{NLSfamily1} in the case of respectively no and anomalous GVD (resp. $\alpha_1=0$ and $\alpha_1=1$), and also briefly discuss (\ref{NLSfamily2}). Then, we discuss \eqref{NLSfamily1} in the case of normal GVD (i.e. $\alpha_1=-1$) in section \ref{sec:rugby2}. Finally, we formulate additional open problems in section \ref{sec:rugby3}, section \ref{sec:rugby4} and section \ref{sec:rugby5}.

\subsection{The case of no and anomalous GVD (resp. $\alpha_1=0$ and $\alpha_1=1$)}\label{sec:rugby1}

Let us first consider the case where $P_2, \alpha_2,  \boldsymbol{\alpha}_3$ and $f$ take the following values,
$$P_2=I, \quad \alpha_2=0, \quad  \boldsymbol{\alpha}_3=0\quad \textrm{ and }\quad f=0,$$
in which case equation \eqref{NLSfamily1} corresponds to the focusing cubic NLS \eqref{eqNLS} in dimension $d=2$ in the case of no GVD, and in dimension $d=3$ in the case of anomalous GVD. As recalled in the introduction, \eqref{eqNLS} is critical in dimension 2, and supercritical in dimension 3, and there exist finite time blow-up solutions in both cases. Let us also recall that 
some of these blow-up dynamics are stable (see for example \cite{MR1} \cite{MR2} \cite{MR3} \cite{MR4} \cite{MR5} in the critical case, and \cite{MRS} in a slightly supercritical case).

Now, let us recall that the breakdown of solutions is not always
observed in physical experiments on the propagation of laser beams and
that instead a {\it filamentation} phenomenon occurs. In this section, we would like to analyze the possibility that the modified model \eqref{NLSfamily1} in the case of no or anomalous GVD prevents the formation of singularities. Below, we analyze the role of each parameter of \eqref{NLSfamily1} separately starting with the nonlinearity $f$.

\subsubsection{The nonlinearity}\label{sec:xoxoxoxolol}

In the case where $P_2=I, \alpha_2=0$, and $ \boldsymbol{\alpha}_3=0$, (\ref{NLSfamily1}) takes the following form,
\begin{equation}\label{rg1}
i\dt v + \Delta v +\big(1+f(\varepsilon^r \abs{v}^2)\big)\abs{v}^2v=0,\,\,\,t>0,\,x\in\R^d,
\end{equation}
where the dimension is $d=2$ in the case of no GVD, and $d=3$ in the case of anomalous GVD. 

As recalled in the introduction, standard modifications of the cubic nonlinearity consist either of the  cubic/quintic nonlinearity, i.e. $f(s)=-s$, or a saturated nonlinearity, i.e. $f$ is a smooth function on $\R^+$ vanishing at the origin and such that $(1+f(s))s$ is bounded on $\R^+$ (e.g. $f(s)=-\frac{s}{1+s}$). Let us first consider the case of a saturated nonlinearity. In that case, the fact that $(1+f(s))s$ is bounded on $\R^+$ implies the following control of the nonlinear term in \eqref{rg1}:
$$\|(1+f(\varepsilon^r \abs{v}^2))\abs{v}^2v\|_{L^2}\lesssim \frac{\|v\|_{L^2}}{\varepsilon^r}.$$
Thus, running a fixed point argument yields the fact that this equation is locally well posed in 
$L^2(\mathbb{R}^d)$ for any integer $d\geq 1$, with a time of existence only depending on the size of 
$\|u_0\|_{L^2}$. Since equation \eqref{rg1} still satisfies the conservation of mass, this immediately implies 
global existence for any initial data in $L^2(\R^d)$ and for any $d\geq 1$. Therefore, modifying the 
cubic nonlinearity in the standard NLS equation by a saturated nonlinearity does indeed prevent finite-time 
breakdown of the solutions.

Next, we consider the modification by a cubic/quintic nonlinearity. In that case, \eqref{rg1} becomes,
\begin{equation}\label{rg2}
i\dt v + \Delta v +\abs{v}^2v-\varepsilon^r \abs{v}^4v=0,\,\,\,t>0,\,x\in\R^d.
\end{equation}
Note that \eqref{rg2} still satisfies the conservation of mass, and the conserved energy is now given by,
$$E(v(t))=\frac{1}{2}\int|\nabla v(t,x)|^2{\rm d}x-\frac{1}{4}\int |v(t,x)|^{4}{\rm d}x+\frac{\varepsilon^r}{6}\int |v(t,x)|^6{\rm d}x=E(v_0).$$
Note also that,
\begin{eqnarray*}
E(v(t))+\left(\frac{1}{2}+\frac{3}{32\varepsilon^r}\right)\|v(t)\|^2_{L^2}&=&\frac{1}{2}\|v(t)\|^2_{H^1}+\frac{\varepsilon^r}{6}\int|v(t)|^2\left(|v(t)|^2-\frac{3}{4\varepsilon^r}\right)^2 {\rm d}x\\
&\geq &  \frac{1}{2}\|v(t)\|^2_{H^1}
\end{eqnarray*}
which together with the conservation of mass and energy yields,
\begin{equation}\label{rg3}
\|v(t)\|_{H^1}\leq \sqrt{2E(v_0)+\left(1+\frac{3}{16\varepsilon^r}\right)\|v_0\|^2_{L^2}}.
\end{equation}

Let us first consider the case of dimension 2. In this case, both the cubic 
focusing NLS and the quintic defocusing NLS are $H^1$ subcritical in 
the sense that the equation is locally well posed in $H^1(\R^2)$ with a 
time of existence only depending on the size of $\|u_0\|_{H^1}$ (see \cite{GV}). 
The proof extends to \eqref{rg2} which is thus locally well posed in 
$H^1(\R^2)$ with a time of existence only depending on the size of 
$\|u_0\|_{H^1}$. Together with the bound \eqref{rg3}, this immediately implies 
global existence for any initial data in $H^1(\R^2)$. The case of dimension 3 is more involved, 
since the quintic defocusing NLS is $H^1$ critical in the sense that the equation is locally well posed in 
$H^1(\R^3)$ with a time of existence depending on the shape of $u_0$ (see \cite{CW}). 
Thus, one cannot rely solely on the bound \eqref{rg3} to prove global existence. 
However, the case of combined nonlinearities 
is addressed in \cite{TVZ}, where global existence of solutions with $H^1$ initial data is proved, 
when the nonlinearity is a sum of two $H^1$-subcritical powers, or the sum of an $H^1$-subcritical power 
and of a defocusing $H^1$-critical power. This includes equation \eqref{rg2}. Therefore, modifying the 
cubic nonlinearity in the standard NLS equation by a quintic defocusing nonlinearity 
does indeed prevent finite-time 
breakdown of the solutions both in dimension 2 and 3. Let us mention an interesting phenomenon  
regarding the qualitative behavior of the solutions. Physical experiments 
suggest an oscillatory behavior of the solution which focuses, then  
defocuses, refocuses,... in an almost periodic fashion (see \cite{BergeSkupin} 
and references therein). Such a behavior is also observed in numerical simulations 
and suggested by heuristic arguments (see for example \cite{SS} and \cite{FP}).

\begin{openproblem} 
 Establish rigorously the ``oscillatory'' phenomenon of the solutions to \eqref{rg1} observed in physical experiments (see e.g. \cite{BergeSkupin}). 
\end{openproblem}

\noindent Another open problem concerns the behavior of the 
solution as $\varepsilon\rightarrow 0$. Pick an initial data 
$v_0\in H^1(\R^d)$, $d=2, 3$, leading to 
a finite time blow-up solution $v$ at time $T>0$ to the cubic NLS \eqref{eqNLS}. 
Consider the solution $v_\varepsilon$ of \eqref{rg1} with the same initial data $v_0$. 
In the case of dimension 2, it is shown in \cite{M1} that $v_\varepsilon(t)\rightarrow v(t)$ in $H^1(\R^2)$ as $\varepsilon\rightarrow 0$ on $[0,T)$ 
and,
$$\lim_{\varepsilon\rightarrow 0}\|v_\varepsilon(T)\|_{H^1(\R^2)}=+\infty.$$
An interesting question is the understanding of the limit of $v_\varepsilon(t)$ as 
$\varepsilon\rightarrow 0$ for $t>T$. Partial results have been obtained in this direction 
in \cite{M2}. There, it is proved that only few scenarios are possible, but one would like to establish whether 
all scenarios do occur or only some of them, and which scenarios are generic.

\begin{openproblem}\label{OP2}
Let $v_0\in H^1(\R^d)$, $d=2, 3$ be leading to a finite time blow-up solution $v$ at time $T>0$ to the cubic NLS \eqref{eqNLS}. Consider the solution $v_\varepsilon$ of \eqref{rg1} 
with the same initial data $v_0$. Describe the behavior of $v_\varepsilon(t)$ as 
$\varepsilon\rightarrow 0$ for $t>T$. 
\end{openproblem}

\subsubsection{Taking the ionization process into account}

Let us discuss equations (\ref{NLSfamily2}) and \eqref{NLSfamily3} 
in the case of no or anomalous GVD  which take the ionization process into account and also correspond 
to a modification of the nonlinearity in the focusing 
cubic NLS \eqref{eqNLS}. Ionization is certainly the most important
phenomenon leading to the formation of laser filaments. Recall that (\ref{NLSfamily2}) is given by
\begin{equation}\label{rg4}
\left\lbrace
\begin{array}{l}
\!\!\!\dsp i (\dt + c_g\dz ) u
+\eps (\Delta_\perp \!+\!\alpha_1\dz^2) u + \eps (\abs{u}^2\! -\!\rho) u  = 
\!- i\eps c(\alpha_4 \abs{u}^{2K-2}\! u+\alpha_5\rho u) , \vspace{1mm} \\
\!\!\!\dsp \dt \rho = \eps \alpha_4 \abs{u}^{2K}+\eps \alpha_5 \rho \abs{u}^2,
\end{array}\right.
\end{equation}
where $\alpha_4, \alpha_5\geq 0$, $c>0$, where $\rho$ is the density
of electron created by ionization, while $\cg=c_g {\bf e}_z$ is the
group velocity associated to the laser pulse, and  where $d=2$ in the case 
of no GVD, and $d=3$ in the case of anomalous GVD. Also, recall that (\ref{NLSfamily3}) 
is given by
\begin{equation}\label{rg4biteux}
\left\lbrace
\begin{array}{l}
\dsp i \dta v
+(\Delta_\perp +\alpha_1\dz^2) v +  (\abs{v}^2 -\tilde\rho) v  = 
- i c(\alpha_4 \abs{v}^{2K-2}v+\alpha_5\tilde\rho v) , \vspace{1mm} \\
\dsp -c_g\dz \tilde\rho = \eps \alpha_4 \abs{v}^{2K}+\eps \alpha_5 \tilde\rho \abs{v}^2,
\end{array}\right.
\end{equation}
where $\tilde\rho$ corresponds to $\rho$ written in a frame moving at the group velocity 
$\cg=c_g{\bf e}_z$ and with respect to a rescaled time $\tau=\eps t$, i.e.
$$\rho(t,X_\perp,z) =\tilde \rho(\eps t, X_\perp,z-c_g t).$$
In view of the second equation of \eqref{rg4biteux}, a boundary condition for $\tr$ has to be prescribed at $z=z_0$ for some $z_0$ in order to obtain a well-posed problem. A natural choice, which ensures that $\tr\geq 0$, is to prescribe $\tr$ at $+\infty$: 
\begin{equation}\label{boundarybiteux}
\lim_{z\rightarrow +\infty}(\nabla^\perp)^l\tr(\tau, X_\perp, z)=0,\,\,\forall\, l\geq 0.
\end{equation}

Let us first discuss the local well-posedness theory for equations \eqref{rg4} and \eqref{rg4biteux}, starting with the first one. One may obtain the existence of solutions to \eqref{rg4} over an interval of time with size $O(\eps^{-1})$. Indeed, for an integer $N>d/2$, differentiating $N$ times with respect to space variables the equations both for $v$ and $\rho$, multiplying \eqref{rg4} respectively by $\nabla^N\overline{v}$ and $\nabla^N\rho$, 
integrating on $\R^d$, taking the imaginary part and integrating by parts for the first equation, and summing both equations yields: 
\begin{eqnarray*}
\frac{d}{\rm dt}[\|v\|^2_{H^N}+\|\rho\|^2_{H^N}]&\lesssim& \eps(\|v\|^2_{H^N}+\|\rho\|^2_{H^N})(\|v\|_{L^\infty}+\|v\|^{2K-2} _{L^\infty}+\|\rho\|_{L^\infty})\\
&\lesssim& \eps(\|v\|^2_{H^N}+\|\rho\|^2_{H^N})(1+\|v\|^2_{H^N}+\|\rho\|^2_{H^N})^{K-1}
\end{eqnarray*}
where we used the fact that $N>d/2$ together with simple product rules and the Sobolev embedding. Integrating this differential inequality, we obtain a time of existence with size $O(\eps^{-1})$.

Next, we discuss the local well-posedness theory for equation \eqref{rg4biteux} supplemented with the boundary condition \eqref{boundarybiteux}. For any integer $N$, we introduce the space $\HH^N$, 
$$\HH^N=\left\{f : \R^d \rightarrow \R , \sum_{j=0}^N\|\nabla^jf\|_{L^2_{X_\perp}L^\infty_z}<+\infty\right\}.$$
For an integer $N>d$, differentiating $N$ times with respect to space variables the equations for $v$,  multiplying the first equation of \eqref{rg4biteux} by $\nabla^N\overline{v}$, integrating on $\R^d$, taking the imaginary part and integrating by parts yields: 
\begin{eqnarray*}
\frac{d}{\rm dt}\|v\|^2_{H^N}&\lesssim& \|v\|^4_{H^N}+\|v\|^{2K}_{H^N}+\|v\|^2_{H^N}\|\tr\|_{\HH^N}
\end{eqnarray*}
where we used the fact that $N>d$ together with the Sobolev embedding in the last inequality. Next, we estimate $\|\tr\|_{\HH^N}$. In view of the second equation of \eqref{rg4biteux} and the boundary condition \eqref{boundarybiteux}, we have
$$\tr(t,x,y,z)=\int_z^{+\infty}\alpha_4\eps |v|^{2K}(t,x,y,\sigma)\exp\left(\frac{\alpha_5\eps}{c}\int_z^{\sigma}|v|^2(t,x,y,s){\rm d}s\right){\rm d}\sigma.$$
We infer
$$|\tr(t,x,y,z)|\lesssim \left(\int_{-\infty}^{+\infty}|v|^{2K}(t,x,y,\sigma){\rm d}\sigma\right)\exp(\|v\|^2_{H^N}),$$
which yields
$$\|\tr\|_{\HH^0}\lesssim \|v\|^{2K}_{H^N}\exp(\|v\|^2_{H^N}),$$
where we used the fact that $N>d$ together with the Sobolev embedding. Next, differentiating the second equation of \eqref{rg4biteux} N times, we obtain
$$-c\partial_z\nabla^N\tr = \alpha_4 \varepsilon\nabla^N(\abs{v}^{2K})+\alpha_5 \varepsilon\nabla^N(\tr \abs{v}^2).$$
In view of the boundary condition \eqref{boundarybiteux}, this yields
\begin{eqnarray*}
\nabla^N\tr(t,x,y,z) &=& \int_z^{+\infty}\left(\alpha_4\eps\nabla^N(|v|^{2K})(t,x,y,\sigma)+\alpha_5\eps(\nabla^N(|v|^2\tr)-|v|^2\nabla^N\tr)(t,x,y,\sigma)\right)\\
&&\times\exp\left(\frac{\alpha_5\eps}{c}\int_z^{\sigma}|v|^2(t,x,y,s){\rm d}s\right){\rm d}\sigma.
\end{eqnarray*}
Hence, we deduce
\begin{eqnarray*}
&&|\nabla^N\tr(t,x,y,z)|\\
&\lesssim& \left(\int_{-\infty}^{+\infty}\left(|\nabla^N(|v|^{2K})(t,x,y,\sigma)+|\nabla^N(|v|^2\tr)-|v|^2\nabla^N\tr|(t,x,y,\sigma)\right){\rm d}\sigma\right)\exp(\|v\|^2_{H^N}) ,
\end{eqnarray*}
which yields
$$\|\tr\|_{\HH^N}\lesssim (\|\tr\|_{\HH^{N-1}}\|v\|^2_{H^N}+\|v\|^{2K}_{H^N})\exp(\|v\|^2_{H^N}).$$
By induction, we obtain
$$\|\tr\|_{\HH^N}\lesssim (1+\|v\|^{2N}_{H^N})\|v\|^{2K}_{H^N}\exp((N+1)\|v\|^2_{H^N}).$$
This yields
\begin{eqnarray*}
\frac{d}{\rm dt}\|v\|^2_{H^N}&\lesssim& \|v\|^4_{H^N}+\|v\|^{2K}_{H^N}+\|v\|^2_{H^N}(1+\|v\|^{2N}_{H^N})\|v\|^{2K}_{H^N}\exp((N+1)\|v\|^2_{H^N}).
\end{eqnarray*}
Integrating this differential inequality, we obtain local existence for $(v, \tr)$ in $H^N\times\HH^N$ for $N>d$.

\begin{remark}
Note that the factor $\eps$ in the second equation of \eqref{rg4biteux}, while present in the model, is not needed for the well-posedness theory. 
\end{remark}

Let us now come back to the issue of global well-posedness/finite time singularity formation. For equations  
\eqref{rg4} and \eqref{rg4biteux}, note that the $L^2$ norm of $u$ (resp. $v$) is dissipated. Indeed, in the case of 
\eqref{rg4biteux}, multiplying by $v$, 
integrating on $\R^d$, taking the imaginary part and integrating by parts 
yields: 
$$\frac{d}{\rm dt}\|v\|^2_{L^2}= -c\left(\alpha_4\|v\|^{2K}_{L^{2K}}+\alpha_5\int |v|^2\tr {\rm d}x\right),$$
from which we deduce that the $L^2$ norm is dissipated since $\tr\geq 0$. On the other hand, the energy 
is not conserved, nor dissipated. We thus can not carry out the analysis of section \ref{sec:xoxoxoxolol} 
 for equation \eqref{rg2}, even in the case of dimension 2\footnote{Note that a simpler model of damped NLS 
 where the $\rho$ or $\tr$ term are not present has been investigated in \cite{AnCaSp}. The authors obtain 
 global existence for a certain range of parameters by controlling a modified energy even if it is not conserved.}. 
 An interesting problem would then be to prove 
that taking the ionization process into account 
(i.e. replacing the cubic focusing NLS \eqref{eqNLS} by either 
equation \eqref{rg4} or equation \eqref{rg4biteux}) does indeed prevent finite-time breakdown 
of the solutions in dimensions 2 and 3. This is formulated in 
the following open problem. 

\begin{openproblem}
Prove that the solutions to equations \eqref{rg4} and \eqref{rg4biteux} are global in dimensions 2 and 3. 
\end{openproblem}

\subsubsection{The damping}

In the case where $P_2=I,  \boldsymbol{\alpha}_3=0$ and $f=0$, (\ref{NLSfamily1}) takes the following form,
\begin{equation}\label{rg5}
i \dt v + \Delta v +i \alpha_2 v +\abs{v}^2 v=0,
\end{equation}
where $d=2$ in the case of no GVD, and $d=3$ in the case of anomalous GVD. 
The mass and energy are not conserved quantities anymore. The mass decreases,
\begin{equation}\label{rg6}
\|v(t)\|_{L^2}=e^{-\alpha_2t}\|v_0\|_{L^2},
\end{equation}
while for the energy we have,
\begin{equation}\label{rg7}
\frac{d}{\rm dt}\left[\frac{1}{2}\|\nabla v\|^2_{L^2}-\frac{1}{4}\int |v|^4 {\rm d}x\right]=
-\alpha_2\left(\|\nabla v\|^2_{L^2}-\int |v|^4 {\rm d}x \right),
\end{equation}
so that the energy is neither increasing nor decreasing. Equation \eqref{rg5} has been analyzed in several works (see e.g. \cite{T}, \cite{SS}, \cite{F} and \cite{OT}). In particular, from standard arguments using Strichartz estimates, there is in dimensions 2 and 3 a continuous and increasing function $\theta$ such that global existence holds for $\alpha_2>\theta(\|u_0\|_{H^1})$ (see \cite{SS} and \cite{OT}). Also, in the 3-dimensional case, there is a function $\theta$ such that stable blow-up solutions exist for $0\leq \alpha_2\leq \theta(u_0)$ (see \cite{T} and \cite{OT}). Finally, the same behavior has been  observed in numerical simulations and suggested by heuristic arguments in the 2-dimensional case (see \cite{F}). Thus, a small $\alpha_2$ does not prevent finite time blow up. Since 
the constant $\alpha_2$ obtained in \eqref{NLSfamily2} is usually small, it appears that the damping cannot by itself explain the physical observation according to which the breakdown of solutions does not occur.

\subsubsection{Off-axis variation of the group velocity}

In the case where $\alpha_2=0,  \boldsymbol{\alpha}_3=0$ and $f=0$, (\ref{NLSfamily1}) takes the following form,
\begin{equation}\label{rg8}
i P_2(\varepsilon \nabla)\dt v + \Delta v  +\abs{v}^2v=0,
\end{equation}
where $d=2$ in the case of no GVD, and $d=3$ in the case of anomalous GVD. The energy remains unchanged and is still conserved, while the mass is replaced by the following conserved quantity,
\begin{equation}\label{rg9}
(P_2(\varepsilon\nabla)v(t),v(t))=(P_2(\varepsilon\nabla)v_0,v_0).
\end{equation}

Let us first consider the case of full off-axis dependence, i.e. the case where,
\begin{equation}\label{rg10}
(P_2(\varepsilon\nabla)v,v)\gtrsim \|v\|^2_{L^2}+\varepsilon^2 \|\nabla v\|^2_{L^2}.
\end{equation}
The operator $P_2(\varepsilon \nabla)$ is a second order self-adjoint operator which is invertible in view of \eqref{rg10}. We denote by $P_2(\varepsilon \nabla)^{-1}$ its inverse. We rewrite \eqref{rg8} in the following form,
\begin{equation}\label{zob}
i \dt v + P_2(\varepsilon \nabla)^{-1}\Delta v  +P_2(\varepsilon \nabla)^{-1}(\abs{v}^2v)=0.
\end{equation}
Using Duhamel's formula, and the semi-group $e^{itP_2(\varepsilon \nabla)^{-1}\Delta}$, we obtain,
\begin{equation}\label{zob1}
v=e^{itP_2(\varepsilon \nabla)^{-1}\Delta}v_0+\int_0^t e^{i(t-s)P_2(\varepsilon \nabla)^{-1}\Delta}P_2(\varepsilon \nabla)^{-1}(\abs{v}^2v)(s){\rm d}s.
\end{equation}
As a simple consequence of \eqref{rg10} and the Sobolev embedding in dimensions 2 and 3, we have,
\begin{equation}\label{zob2}
\|P_2(\varepsilon \nabla)^{-1}(\abs{v}^2v)\|_{H^1(\R^d)}\lesssim \frac{1}{\varepsilon}\|\abs{v}^2v\|_{L^2(\R^d)}\lesssim \frac{1}{\varepsilon}\|v\|_{H^1(\R^d)}^3,\,\,\, d=2, 3.
\end{equation}
Now, a fixed point argument based on the formulation \eqref{zob1} together with the estimate \eqref{zob2} and the fact that the semi-group $e^{itP_2(\varepsilon \nabla)^{-1}\Delta}$ is unitary on $H^1$, 
implies that \eqref{rg8} is locally well posed in $H^1$ for dimensions $d=2, 3$, with a time of 
existence only depending on the size of $\|u_0\|_{H^1}$. Together with the lower bound 
\eqref{rg10} and the conserved quantity \eqref{rg9} which yield a uniform in time bound 
on the $H^1$ norm of the solution, this immediately implies global existence for any 
initial data in $H^1$ when $d=2, 3$. Therefore, modifying the focusing cubic NLS equation 
by adding a full off-axis dependence does indeed prevent the finite-time breakdown of the solutions. 

Next, let us consider the case of partial off-axis dependence, i.e. the case where,
\begin{equation}\label{rg11}
(P_2(\varepsilon\nabla)v,v)\gtrsim \|v\|^2_{L^2}+\varepsilon^2 \sum_{k=1}^j\|v_k\cdot\nabla v\|^2_{L^2},
\end{equation}
for vectors $v_k$ in $\R^d$, $k=1,\ldots, j$, where $j=1$ if $d=2$, and $j=1$ or $j=2$ if $d=3$ so that the right-hand side of \eqref{rg11} does not control the full $H^1$ norm. Notice that the well-posedness theory has yet to be investigated in this case. Indeed, consider the nonlinear term in the formulation  \eqref{zob1},
\begin{equation}\label{zob3}
e^{it P_2(\varepsilon \nabla)^{-1}\Delta}P_2(\varepsilon \nabla)^{-1}(\abs{v}^2v).
\end{equation}
In the directions $v_k$ of \eqref{rg11}, the operator $P_2(\varepsilon \nabla)^{-1}$ gains two derivatives, while the semi-group $e^{it P_2(\varepsilon \nabla)^{-1}\Delta}$ does not disperse (and thus does not satisfy a useful Strichartz estimate). On the other hand, in the directions orthogonal to the vectors $v_k$,  the operator $P_2(\varepsilon \nabla)^{-1}$ does not gain any derivative, while the semi-group $e^{it P_2(\varepsilon \nabla)^{-1}\Delta}$ should satisfy a useful Strichartz estimate. Thus, in order to investigate the well-posedness theory, one should try to combine the regularization provided by the operator $P_2(\varepsilon \nabla)^{-1}$ in the directions $v_k$ of \eqref{rg11} with the dispersive properties of the semi-group $e^{it P_2(\varepsilon \nabla)^{-1}\Delta}$ in the direction orthogonal to the vectors $v_k$. Now, it would be interesting to investigate whether a suitable well-posedness theory together with the conserved quantities given by the energy and \eqref{rg9} yield global existence. 
This suggests the following open problem. 

\begin{openproblem}
Investigate both the local and global well-posedness for equation \eqref{rg8} in the case of partial off-axis dependence. In particular,  does the modification of the focusing cubic NLS \eqref{eqNLS} by the addition of a partial off-axis dependence prevent the finite-time breakdown of the solutions?
\end{openproblem}

\subsubsection{Self-steepening of the pulse}

In the case where $P_2=I, \alpha_2=0$ and $f=0$, (\ref{NLSfamily1}) takes the following form,
\begin{equation}\label{rg12}
i \dt v + \Delta v +(1+i \varepsilon  \boldsymbol{\alpha}_3\cdot \nabla )\abs{v}^2 v=0,
\end{equation}
where $d=2$ in the case of no GVD, and $d=3$ in the case of anomalous GVD. Adding 
the operator $(1+i \varepsilon  \boldsymbol{\alpha}_3\cdot \nabla )$ in front of the nonlinearity of the focusing 
cubic NLS does certainly not prevent finite-time breakdown of the solutions. Indeed, one 
expects to obtain even more blow up solutions since the formation of optical shocks is expected 
in this case. The term shock is used in view of the similarity with the Burgers equation - both in terms of the nonlinearity of the equation and the profiles of the solutions observed in some numerical simulations (see \cite{AL}  for more details on optical shocks).  
The reason we included this modification in the discussion is because it 
may account for the physical observation of the self-steepening of the pulse: an initial pulse which 
is symmetric becomes asymmetric after propagating over a large distance and its profile seems 
to develop a shock. This phenomenon has been widely observed and we refer the reader to \cite{BergeSkupin} and references therein. Thus, an interesting open problem would be to exhibit solutions of \eqref{rg12} for which the corresponding profile develops a shock. We formulate a slightly more general open problem below.

\begin{openproblem}
Describe the blow-up scenarios for the finite time blow-up solutions to equation \eqref{rg12}.
\end{openproblem}

\subsection{The case of normal GVD (i.e. $\alpha_1=-1$)}\label{sec:rugby2}

Let us consider the case where $P_2, \alpha_1, \alpha_2,  \boldsymbol{\alpha}_3$ and $f$ take the following value,
$$P_2=I, \alpha_1=-1, \alpha_2=0,  \boldsymbol{\alpha}_3=0\textrm{ and }f=0 ,$$
in which case equation \eqref{NLSfamily1} corresponds to the following equation in dimension 2 or 3,
\begin{equation}\label{rg13}
i\dt v + (\Delta_\perp -\dz^2) v +\abs{v}^2 v =0.
\end{equation}
This equation is sometime referred to as hyperbolic cubic NLS, or non elliptic cubic NLS. Since Strichartz estimates only depend on the curvature of the corresponding characteristic manifold (see \cite{Str}), equation \eqref{rg13} satisfies the same Strichartz estimates as the cubic focusing NLS. Thus, the result of Ginibre and Velo \cite{GV} immediately extends to \eqref{rg13} which is  locally well-posed in $H^1=H^1(\R^d)$ with $d=2, 3$. Therefore, for $v_0\in H^1$, there exists $0<T\leq +\infty$ and a unique solution $v(t)\in {\mathcal{C}}([0,T),H^1)$ to (\ref{rg13}) and either $T=+\infty$, and the solution is global, or the solution blows up in finite time $T<+\infty$ and then $\lim_{t\uparrow T}\|\nabla u(t)\|_{L^2}=+\infty$. 

Equation (\ref{rg13}) admits the following conservation laws in the energy space $H^1$,
$$
\begin{array}{lll}
L^2-\mbox{norm}: \ \ \|v(t)\|^2_{L^2}=\|v_0\|_{L^2}^2; \vspace{1mm} \\
\displaystyle 
\mbox{Energy}:\ \ E(v(t))=\frac{1}{2}\int|\nabla_\perp v(t,x)|^2{\rm d}x
-\frac{1}{2}\int |\partial_zv|^2{\rm d}x 
-\frac{1}{4}\int |v(t,x)|^{4}{\rm d}x=E(v_0); \vspace{1mm} \\
\displaystyle 
\mbox{Momentum}:\ \ Im \left( \int\nabla v(t,x)\overline{v(t,x)}{\rm d}x \right) =
Im \left( \int\nabla v_0(x)\overline{v_0(x)}{\rm d}x \right).
\end{array}
$$
A large group of $H^1$ symmetries leaves the equation invariant: if $u(t,x)$ solves \eqref{rg13}, then $\forall (\lambda_0,\tau_0,x_0,\gamma_0)\in \R_*^+\times\R\times\R^3\times \R$, so does 
\begin{equation}
\label{rg14}
u(t,x)=\lambda_0v(\lambda_0^2t+\tau_0,\lambda_0 x+x_0)e^{i\gamma_0}.
\end{equation}
Note that \eqref{rg13} is not invariant under the usual Galilean transform. However, it is invariant under a twisted Galilean transform. For $\beta=(\beta_1, \beta_2, \beta_3)\in\R^3$, we define,
$$\hat{\beta}=(\beta_1, \beta_2, -\beta_3)\in\R^3.$$
Then \eqref{rg13} is invariant under the following twisted Galilean transform,
\begin{equation}\label{rg15}
v_\beta(t,x)=v(t, x-\beta t)e^{i\frac{\hat{\beta}}{2}\cdot(x-\frac{\beta}{2}t)}.
\end{equation}
Note also that the scaling symmetry $u(t,x)=\lambda_0v(\lambda^2_0t,\lambda_0 x)$ leaves the space $L^2(\R^2)$ invariant so that \eqref{rg13} is critical with respect to the conservation of mass in dimension 2, while it leaves the homogeneous Sobolev space $\dot{H}^{\frac{1}{2}}(\R^3)$ invariant so that \eqref{rg13} is supercritical with respect to the conservation of mass in dimension 3. 

In contrast to the cubic focusing NLS, the existence or absence of finite time blow up solutions for \eqref{rg13} is widely open. While there exists a counterpart to the virial for \eqref{rg13}, it is too weak to provide the existence of finite-time blow up dynamics (see the discussion in \cite{SS}). Also, 
there are no standing waves in the form $v(t,x)=Q(x)e^{i\omega t}, \omega\in \R$ in the energy space for the equation \eqref{rg13} (see \cite{GS}), while for the focusing cubic NLS, such an object exists and is fundamental in the analysis of the blow up dynamics (see e.g. \cite{MR1} \cite{MR2} \cite{MR3} \cite{MR4} \cite{MR5} \cite{MR6} in dimension 2, and \cite{MRS} in dimension 3).
 Now, numerical simulations suggest that the solutions to \eqref{rg13} do not break down in finite time (see e.g. \cite{SS}, \cite{FP} and references therein). In particular, the simulations exhibit a phenomenon called pulse splitting where the pulse focuses until it reaches a certain threshold. Once this threshold is attained, the pulse splits in two pulses of less amplitude moving away from each other. This phenomenon might repeat itself (multiple splitting) and ultimately prevent finite time blow up, but this has not been clearly backed up by numerics so far. Thus an interesting problem would be to prove that the cubic NLS equation \eqref{rg13} (i.e. in the case of normal GVD) does not have finite time blow up solutions, whether this is due to pulse splitting, or to some other phenomenon. This is formulated in the two open problems below.

\begin{openproblem}
Prove that the solutions to equation \eqref{rg13} are global in dimension 2 and 3.
\end{openproblem}

\begin{openproblem}
Describe rigorously the phenomenon of pulse splitting for equation \eqref{rg13}.
\end{openproblem}

\begin{remark}
We only considered in this section the case where,
$$P_2=I, \alpha_2=0,  \boldsymbol{\alpha}_3=0\textrm{ and }f=0$$
which is \eqref{rg13}. While it is of interest to investigate the qualitative role of the parameters $P_2, \alpha_2,  \boldsymbol{\alpha}_3$ and $f$ of equation \eqref{NLSfamily1} in the case of normal GVD (i.e. the case $\alpha_1=-1$), we chose not to investigate their effects since we are interested in this paper primary on the non existence of focusing dynamics,  and equation \eqref{rg13} is expected to have only global solutions. 
\end{remark}

\begin{remark}
In a recent preprint \cite{KNZ}, standing waves and selfsimilar solutions have been exhibited for equation \eqref{rg13} in dimension 2. However, these solutions do not belong to the energy space. In particular, they are not in $C^2$ and do not decay along the two diagonals in $\R^2$. 
\end{remark}

\subsection{Mixing several phenomena}\label{sec:rugby3}

In section \ref{sec:rugby1} and section \ref{sec:rugby2}, we have investigated the effect of each of the parameters  $P_2, \alpha_1, \alpha_2,  \boldsymbol{\alpha}_3$ and $f$ on the solutions to equation \eqref{NLSfamily1}, and in particular whether these modifications of the cubic NLS \eqref{eqNLS} prevent the existence of finite time blow-up solutions. We have also formulated several open problems. Now, instead of studying all these effects separately, one may investigate the case when all these phenomenon are present at the same time, and ask which are the dominant ones? Consider for instance the case where there is partial off-axis variation of the group velocity in the direction $z$, and at the same time a potential self-steepening of the pulse in the same direction, i.e. $P_2=1-\partial^2_z$ and $ \boldsymbol{\alpha}_3=e_z$,
\begin{equation}\label{rg16}
i (1-\varepsilon^2\partial^2_z)\dt v + \Delta v +(1+i \varepsilon\partial_z)(\abs{v}^2 v)=0.
\end{equation}
Then, one may wonder whether the partial off-axis variation of the group velocity prevents the self-steepening from taking place. This is formulated in the open problem below. 

\begin{openproblem}
Do the solutions to equation \eqref{rg16} exhibit the phenomenon of self-steepening?
\end{openproblem}

\subsection{The vectorial case}\label{sec:rugby4}

Recall that equation (\ref{NLSfamily1}) is a particular case of the vectorial equation (\ref{NLSfamily1})$_{\rm vect}$,
$$i P_2(\eps \nabla)\dt {\bf v} + (\Delta_\perp +\alpha_1\dz^2) {\bf v} +i \alpha_2 {\bf v}
 +\frac{1}{3}(1+i \eps  \boldsymbol{\alpha}_3\cdot \nabla )
\big[({\bf v}\cdot {\bf v})\overline{{\bf v}}+2\abs{{\bf v}}^2 {\bf v} \big]=0,
$$
where ${\bf v}$ is now a $\C^2$-valued function, and where we consider for simplicity the equation corresponding to the cubic case (i.e. $f=0$ in (\ref{NLSfamily1})). In fact, (\ref{NLSfamily1}) is a particular case of (\ref{NLSfamily1})$_{\rm vect}$ corresponding to initial data living on a one-dimensional subspace of $\C^2$ (see Remark \ref{remvect}). 
Now, one may of course consider the previous questions formulated for equation (\ref{NLSfamily1}), and investigate the same problems for the vectorial case. This is formulated in the open problem below.

\begin{openproblem}
Investigate the vectorial counterparts for equation (\ref{NLSfamily1})$_{\rm vect}$ of the various open problems formulated for the scalar equation (\ref{NLSfamily1}) in section \ref{sec:rugby1}, section \ref{sec:rugby2}, section \ref{sec:rugby3} and section \ref{sec:rugby5}. 
\end{openproblem}

\subsection{The approximation of the Maxwell equations over longer times}\label{sec:rugby5}

We have provided in \S \ref{sectDer} a rigorous justification for all the NLS-type models derived in this paper. 
More precisely, we have proved that there exists $T>0$ such that
\begin{enumerate}
\item The exact solution $\bU$ of (\ref{eqgen})-(\ref{CI}) exists on $[0,T/\eps]$.
\item The NLS-type model under consideration (e.g. (\ref{eqpolar})) admits a unique solution $u_{\rm app}$ with initial condition $u^0$  on the same time interval.
\item The approximation ${\bf U}_{\rm app}(t,x)=
u_{\rm app}(t,x)e^{i\frac{\bk\cdot x-\omega t}{\eps}}+\cc$ remains close to 
${\bf U}$ on this time interval,\footnote{In the error estimates of 
\S \ref{sectDer}, the constant on the r.h.s. is 
$C(T,\abs{u^0}_B)$ rather than 
$C(T,\abs{u_{\rm app}}_{L^\infty([0,T/\eps];B)})$. 
Since $\abs{u_{\rm app}}_{L^\infty([0,T/\eps];B)}
=C(T,\abs{u_0}_B)$, 
this is of course equivalent, but the first form is more
convenient for the present discussion. }
$$
\abs{\bU-\bU_{\rm app}}_{L^\infty([0,T/\eps]\times\R^d)}
\leq \eps C(T,\abs{u_{\rm app}}_{L^\infty([0,T/\eps];B)})
(1+\abs{\nabla u^0}_B+\abs{u^0}_{B^{(3)}}).
$$
\end{enumerate}
Such a justification is far from being sharp. In particular, for the standard cubic focusing NLS equation, one
has $T<T_{\rm foc}$, where $T_{\rm foc}$ is the time when focusing occurs. Indeed, the above justification process requires
that $u_{\rm app}$ remains bounded on $[0,T/\eps]$ (after rescaling, this is equivalent to require that the solution $v$ to (\ref{NLSfamily1}) is bounded on $[0,T]$). In the error estimate above, the constant $C(T,\abs{u_{\rm app}}_{L^\infty([0,T/\eps];B)})$ therefore
blows up as $T\to T_{\rm foc}$. \\
It follows that all the NLS-type equations derived here are justified far enough from the focusing time. Now, relevant differences between the different models can only be observed close enough to focusing (they all differ by formally
$O(\eps^2)$ terms that become relevant only near the focusing point). So, roughly speaking, we have only proved that all \emph{the models are justified on a time interval for which they are basically identical}.

As seen above, existence beyond the focusing time of the standard cubic NLS equation is proved or expected for
many of the variants considered here. For such models, the above argument does not work, i.e., the constant
$C(T,\abs{u_{\rm app}}_{L^\infty([0,T/\eps];B)})$ of the error estimate does not blow up as $T\to T_{\rm foc}$. This is for instance the case of
the cubic/quintic NLS equation (\ref{rg2}) that admits a global solution $v^\eps$. However (see Open Problem \ref{OP2}), we have
$$
\lim_{\eps\to 0}\lim_{T\to T_{\rm foc}} \abs{v^\eps(t,\cdot)}_{H^1}=+\infty,
$$
so that there is no reason to expect the error term $\eps C(T,\abs{v^\eps}_{L^\infty([0,T/\eps];B)})$ to be small near the focusing point and for small $\eps$.

For the moment, the merits of the NLS-type models derived here to describe correctly the mechanisms at stake during focusing can only by assessed numerically. Hence the following interesting open problem,
\begin{openproblem}
Rigorously justify one of the NLS-type models derived here on a time scale $[0,T/\eps]$ with $T\geq T_{\rm foc}$.
\end{openproblem}

\begin{acknowledgements}
The autors warmly thank Christof Sparber for pointing reference \cite{TVZ} out. 
\end{acknowledgements}

\appendix
\section{Nondimensionalization of the equations}

\subsection{The case without charge nor current density}\label{NDwithout}

There are two characteristic times for the situation considered here. The first one, denoted $\bar t$, is the inverse of the frequency of the laser pulse; the second one, denoted $\bar T$ is the duration of the pulse. In the regimes considered here\footnote{We refer to \cite{Donnat} for variants of this nondimensionalization.}, one
has $\bar T\gg \bar t$, and the small parameter $\eps$ is defined as
$$
\eps=\frac{\bar t}{\bar T}.
$$
The time variable is naturally nondimensionalized with $\bar T$, while the space variables are nondimensionalized by $L=c\bar T$, namely,
$$
t=\bar T \tilde t,\qquad x=L \tilde x,
$$
where dimensionless quantities are denoted with a tilde. 
We also nondimensionalize the unknowns $\ttE$, $\ttB$ and 
$\ttP$ with typical orders $\ttE_0$, $\ttB_0$ and $\ttP_0$. 
The typical scale for the electric and magnetic fields are 
directly deduced from $\ttP_0$,
$$
\ttE_0=\frac{1}{\epsilon_0}\ttP_0,\qquad 
\ttB_0=\frac{1}{c}\ttE_0.
$$

In order to choose the remaining $\ttP_0$, some consideration 
on the polarization equation is helpful. We recall that 
equation (\ref{eqpolarisation}) is given by
$$
\dt^2 \ttP+\Omega_1\dt \ttP+\Omega_0^2 \ttP-\nabla V_{NL}(\ttP)=\epsilon_0 b \ttE.
$$
For all the applications we have in mind, on has $\nabla V_{NL}(\ttP)=a_3\abs{\ttP}^2\ttP+h.o.t.$, so that we  nondimensionlize the 
nonlinear term as
$$
\nabla_\ttP V_{NL}(\ttP)=a_3 \ttP_0^3\nabla_{\tilde \ttP}\tilde V_{NL}(\tilde \ttP), 
$$
where $\tilde{V}_{NL}$ is the dimensionless potential
$$
\tilde{V}_{NL}(\tilde \ttP)=\frac{1}{a_3 \ttP_0^4}V_{NL}(\ttP).
$$
\begin{example}\label{exNLbis}
For the three cases considered in Example \ref{exNL}, we find, denoting  $\tilde a \eps^r=\frac{a_5\ttP_0^2}{a_3}$, where
$r >0$ is chosen such that $\tilde a=O(1)$,
\begin{itemize}
\item[(i)] Cubic nonlinearity: 
$$
\nabla_{\tilde \ttP}\tilde V_{NL}(\tilde \ttP)=\abs{\tilde \ttP}^2 \tilde \ttP 
$$
\item[(ii)] Cubic/quintic  nonlinearity:
$$
\nabla_{\tilde \ttP}\tilde V_{NL}(\tilde \ttP)= (1-\tilde a\eps^r  \abs{\tilde \ttP}^2)\abs{\tilde \ttP}^2\tilde \ttP.
$$
\item[(iii)] Saturated nonlinearity:
$$
\nabla_{\tilde \ttP}\tilde V_{NL}(\tilde \ttP)= \frac{1+\frac{\tilde a\eps^r}{3} \abs{\tilde \ttP}^2}{(1+\frac{2\tilde a\eps^r}{3}  \abs{\tilde \ttP}^2)^2}\abs{\tilde \ttP}^2\tilde \ttP.
$$
\end{itemize}
All these examples can therefore be put under the form
$$
\nabla_{\tilde \ttP}\tilde V_{NL}(\tilde \ttP)=\big(1+\tilde f(\tilde a\eps^r\abs{\tilde \ttP}^2)\big)\abs{\tilde \ttP}^2\tilde \ttP,
$$
where $\tilde f:\R^+\to \R$ is a smooth function such that $\tilde f(0)=0$.
\end{example}
In dimensionless form, the polarization  equation becomes therefore
$$
\partial_{\tilde t}^2 \tilde \ttP+\bar T\Omega_1 \partial_{\tilde t}\tilde \ttP +\bar T^2\Omega_0^2 \tilde \ttP
-a_3\bar T^2\ttP_0^2 \nabla_{\tilde \ttP}\tilde V_{NL}(\tilde \ttP) =\epsilon_0 b\frac{\bar T^2 \ttE_0}{\ttP_0}\tilde \ttE
$$
We therefore choose $\ttP_0$ to be the typical nonlinear scale for which the nonlinear terms in the above equations are
of the same order as the second order time derivative, namely,
$$
\ttP_0=\frac{1}{\bar T\sqrt{a_3}}.
$$

Finally, we need some information on the size of $\Omega_0$, $\Omega_1$ and $b$ to give our final dimensionless form of the
equations. The resonance frequency of the harmonic oscillator 
is typically of the same order as the frequency of the laser 
pulse,
$$
\Omega_0=\bar t^{-1}\omega_0;
$$
the damping frequency is written under the form
$$
\Omega_1=\eps^{\alpha+1}\bar t^{-1} \,\omega_{1},
$$
where the dimensionless damping frequency $\omega_{1}$ has size $O(1)$ while the coefficient $\alpha$ depends on the medium in which the laser propagates. Typically, $\alpha=1$ or $\alpha=2$. Writing the coupling constant $b$ as
$$
b=\frac{\gamma}{\bar t^2},
$$
the dimensionless polarization equations can therefore be written as 
$$
\partial_{\tilde t}^2 \tilde \ttP+\eps^{\alpha}\omega_1 \partial_{\tilde t}\tilde \ttP +\eps^{-2}\omega_0^2 \tilde \ttP
- \nabla_{\tilde \ttP}\tilde V_{NL}(\tilde \ttP)=\eps^{-2}\gamma\tilde \ttE.
$$
Omitting the tildes for the sake of clarity, Maxwell's equations (\ref{Max2})-(\ref{eqpolarisation})  read therefore
(without  charge nor current density),
$$
\left\lbrace
\begin{array}{l}
\dsp \dt \ttB+ \curl \ttE=0,\vspace{1mm}\\
\dsp \dt \ttE- \curl \ttB=-\dt \ttP,\vspace{1mm}\\ 
\dsp \partial_{t}^2 \ttP+\eps^{\alpha}\omega_1 \partial_{t}\ttP +\eps^{-2}\omega_0^2  \ttP
-\nabla  V_{NL}( \ttP)=\eps^{-2}\gamma \ttE.
\end{array}\right.
$$
Introducing the auxiliary unknowns,
$$
\ttP^\sharp=\frac{\omega_0}{\sqrt{\gamma}}\ttP,\qquad
\ttQ^\sharp=\frac{\eps}{\omega_0}\dt \ttP^\sharp=\frac{\eps}{\sqrt{\gamma}}\dt \ttP,
$$
and working with nonlinearities as those considered in Example \ref{exNLbis}, 
this system can be written as a first order system,
\begin{equation}\label{MaxND}
\left\lbrace
\begin{array}{l}
\dsp \dt \ttB+ \curl \ttE=0,\vspace{1mm}\\
\dsp \dt \ttE- \curl \ttB+\frac{1}{\eps}\sqrt{\gamma} \ttQ^\sharp=0,\vspace{1mm}\\ 
\dsp \dt \ttQ^\sharp+\eps^{\alpha}\omega_1 \ttQ^\sharp-\frac{1}{\eps}(\sqrt{\gamma} \ttE-\omega_0 \ttP^\sharp)=\eps \frac{\gamma}{\omega_0^3}
\big(1+f(\eps^r\abs{\ttP^\sharp}^2)\big)\abs{ \ttP^\sharp}^2\ttP^\sharp,
\vspace{1mm}\\ 
\dsp \dt \ttP^\sharp-\frac{1}{\eps}\omega_0 \ttQ^\sharp=0,
\end{array}\right.
\end{equation}
where we wrote $f(x)=\tilde f(\tilde a \frac{\gamma}{\omega_0^2}x)$.\\
This system has the form (\ref{eqgen})  with
 $n=12$, $\bU=(\ttB,\ttE,\ttQ^\sharp,\ttP^\sharp)^T$ and
$$
A(\partial)=
\left(
\begin{array}{cccc}
0 & \curl & 0 & 0\\
-\curl & 0 & 0 & 0\\
0 & 0 & 0 &0\\
0 & 0 & 0 &0
\end{array}\right)
,\qquad
E=\left(
\begin{array}{cccc}
0 & 0 & 0 & 0\\
0 & 0 & \sqrt{\gamma}I & 0\\
0 & -\sqrt{\gamma}I & 0 & \omega_0 I \\
0 & 0 & -\omega_0 I & 0
\end{array}\right)
$$
while $A_0$ is the block diagonal matrix $A_0=\mbox{diag}(0,0,\omega_1 I,0)$, 
so that the first two points of Assumption \ref{assu1} are satisfied. For the third one, remark that the nonlinearity is given by
\begin{equation}\label{NLMax}
F(\eps,\bU)=\left(0,0,\frac{\gamma}{\omega_0^3}(1+f(\eps^r\abs{\ttP^\sharp}^2) \big)\abs{\ttP^\sharp}^2\ttP^\sharp,0\right)^T,
\end{equation}
which is of the same form as in Assumption \ref{assu1} 
with $Q(U)=\abs{\ttP^\sharp}^2$ and $T(U,\bar U,U)=(0,0,
\frac{\gamma}{\omega_0^3}\abs{\ttP^\sharp}^2\ttP^\sharp,0)^T$. 
One can check that such nonlinearities also satisfy 
Assumption \ref{assuF}.

\subsection{The case with charge and current density}
\label{NDwith} 
Now, we deal with the nondimensionalization of equations 
\eqref{Max2}-\eqref{eqpolarisation}-\eqref{eqrhoJ}. We proceed 
in the same way as in the previous paragraph, simply adding 
the characteristic sizes $\ttJ_0$ and $\rho_0$ of the free electron current 
density $\ttJ_{\rm e}$ and charge density $\rho$. Considerations on 
the polarization equation are the same. 

\noindent
We choose $\rho_0$ from the equation for the charge density,
$$
\dsp \partial_{\tilde t} \tilde\rho = \sigma_K \rho_{\rm nt} 
\frac{\ttE_0^{2K}}{\rho_0} \bar T \abs{\tilde\ttE}^{2K}
+ \frac{\sigma}{U_{\rm i}} \ttE_0^2 \bar T 
\tilde\rho \abs{\tilde\ttE}^2.
$$
Many configurations are possible, according to the numerical value of
the various 
coefficients involved (see for instance  \cite{BergeSkupin} for
experimental data). We choose here a configuration that leads 
to the richest model, where the coupling constants of the two 
nonlinear terms are of the same order; more precisely
$$
\dsp \partial_{\tilde t} \tilde\rho = 
\eps c_1 \abs{\tilde\ttE}^{2K}
+ \eps c_2 \tilde\rho \abs{\tilde\ttE}^2, 
$$
for non-negative constants $c_1$, $c_2$. This corresponds to the
following choice for $\rho_0$,
$$
\rho_0=\frac{\sigma_K\rho_{\rm nt}E_0^{2K}}{\eps c_1}.
$$
Knowing $\rho_0$, we can then determine $\ttJ_0$ from the equation for the
free electron current density,
$$
\ttJ_{\rm e}=\frac{q_{\rm e}^2}{\omega_{\rm l} m_{\rm e}}\rho_0 E_0\,\tilde \rho\,{\mathcal H}(\eps D_{\tilde z})\tilde \ttE,
$$
which naturally leads to
$$
\ttJ_0=\frac{q_{\rm e}^2}{\omega_{\rm l} m_{\rm e}}\rho_0 E_0=\eps \frac{q_{\rm e}^2\bar{T}\rho_0 E_0}{m_{\rm e}}.
$$
Going back to Maxwell's 
equations, we get 
$$
\left\lbrace
\begin{array}{l}
\dsp \partial_{\tilde t} \tilde\ttB 
+ \curl \tilde\ttE = 0 , \vspace{1mm}\\
\dsp \partial_{\tilde t} \tilde\ttE - \curl \tilde\ttB
= - \partial_{\tilde t} \tilde\ttP 
- \frac{1}{\epsilon_0} \bar T \frac{\ttJ_0}{\ttE_0} 
\tilde\ttJ_{\rm e} -\frac{U_{\rm i}\rho_0}{\epsilon_0 E_0}\big(\eps c_1
\abs{\tilde\ttE}^{2K-2}+\eps c_2\tilde\rho\big)\tilde\ttE , \vspace{1mm} \\ 
\end{array}\right.
$$
Here again, many configurations can be found, and we choose one that
leads to the richest models, namely,
$$
\frac{1}{\epsilon_0} \bar T \frac{\ttJ_0}{\ttE_0} =\eps c_3,\qquad
\frac{U_{\rm i}\rho_0}{\epsilon_0 E_0}=c_0
$$
where $c_0,c_3$ are dimensionless positive constants. Replacing $\tilde\ttJ_{\rm e}$ by
$c_3\tilde\ttJ_{\rm e}$, $\tilde\rho$ by $c_3\tilde\rho$ and $c_1$ by $c_1c_3$, we
can assume that $c_3=1$.

Finally, dropping the tildes, introducing the unknowns 
$\ttQ^\sharp$ and $\ttP^\sharp$, and using the same notations 
as in \eqref{MaxND}, we obtain
\begin{equation}\label{MaxNDwith}
\left\lbrace
\begin{array}{l}
\dsp \dt \ttB + \curl \ttE = 0 , \vspace{1mm} \\
\dsp \dt \ttE - \curl \ttB + \frac{1}{\eps}\sqrt{\gamma} \ttQ^\sharp =
-\eps \rho {\mathcal H}(\eps D_z)\ttE 
-\eps c_0 \big(c_1\abs{\ttE}^{2K-2}+ c_2\rho\big)\ttE , \vspace{1mm} \\ 
\dsp \dt \ttQ^\sharp 
+ \eps^{\alpha}\omega_1 \ttQ^\sharp 
- \frac{1}{\eps}(\sqrt{\gamma} \ttE 
- \omega_0 \ttP^\sharp)=\eps \frac{\gamma}{\omega_0^3}
\big(1+f(\eps^r\abs{\ttP^\sharp}^2)\big)
\abs{ \ttP^\sharp}^2\ttP^\sharp , \vspace{1mm} \\ 
\dsp \dt \ttP^\sharp 
- \frac{1}{\eps}\omega_0 \ttQ^\sharp=0 , \vspace{1mm} \\ 
\dsp \dt \rho = 
\eps c_1 \abs{\ttE}^{2K} + \eps c_2 \rho \abs{\ttE}^2.
\end{array}\right.
\end{equation}

\section{Explicit computations for Maxwell equations}
\label{sectexplcomp}

\subsection{The case without charge nor current density}

We derive here the variants of the NLS equations derived in this paper in the particular case of the 
Maxwell equations (\ref{MaxND}). We derive all the versions of the NLS equations that do not take
into account the frequency dependence of the polarization (this corresponds to (\ref{eqrad2})). This latter effect,
which leads to the generalization (\ref{eqrad3}) of the previous model, is examined separately. For the sake of simplicity, we only consider cubic nonlinearities here; we show that in this context, (\ref{eqrad3}) reduces to the
vectorial family of NLS equation $\eqref{NLSfamily1}_{\rm vect}$.
\subsubsection{Without frequency dependent polarization}
In order to check that Assumption \ref{assu3} is  satisfied by the dimensionless Maxwell equations (\ref{MaxND}),
we need to compute the eigenvalues and eigenvectors of the matrix
$$
{\mathcal L}(0,\bk)=A(\bk)+\frac{1}{i}E,
$$
where $A(\bk)$ and $E$ are given in \S \ref{NDwith}, which is equivalent to find all non trivial solutions to the equation
$$
{\mathcal L}(\omega,\bk)u=0, \quad \mbox{ with }\quad {\mathcal L}(\omega,\bk)=-\omega I+A(\bk)+\frac{1}{i}E,
$$
and $\omega\in \R$, $u=(\ttb,\tte,\ttq^\sharp,\ttp^\sharp)\in \C^{12}$.\\
From the last two equations of (\ref{MaxND}), we can always write $\ttp^\sharp$ 
and $\ttq^\sharp$ in terms of $\tte$,
\begin{equation}\label{relpor}
\ttp^\sharp=-\frac{\omega_0\sqrt{\gamma}}{\omega^2-\omega_0^2}\tte,\qquad
\ttq^\sharp=i\frac{\omega\sqrt{\gamma}}{\omega^2-\omega_0^2}\tte.
\end{equation}
Let us now consider several cases:
\begin{itemize}
\item If $\tte\parallel \bk$ then by the first equation, either $\omega=0$ and $\ttb \parallel \bk$ or $\ttb=0$ and $\omega^2=\omega_0^2+\gamma$.
\item If $\tte\not\parallel\bk$ then $\omega\neq 0$ and combining the first two equations, we get
$$
\big[\omega^2\frac{\omega^2-(\omega_0^2+\gamma)}{\omega^2-\omega_0^2}-k^2\big]\tte=-(\bk\cdot \tte)\bk,
$$
where we denoted as usual by $k$ the modulus of $\bk$, $k=\abs{\bk}$; this
 in turns implies (since $\bk$ is not parallel to $\tte$) that $\bk\perp\tte$ and
$$
\omega^4-\omega^2\big((\omega_0^2+\gamma)+k^2\big)+\omega_0^2 k^2=0;
$$
denoting $\Delta=\big((\omega_0^2+\gamma)+k^2\big)^2-4\omega_0^2k^2>0$, this equation admits four real solutions
$$
\omega_{\pm,\pm}(k)=\pm\frac{1}{\sqrt{2}}\sqrt{\omega_0^2+\gamma+k^2\pm \sqrt{\Delta}}.
$$
\end{itemize}

There are $m=7$ different branches of $\CL$: 
three constant sheets given by
$$
\omega=0 \quad \text{and} \quad 
\omega=\pm\sqrt{\gamma+\omega_0^2}
$$
(the first one being of multiplicity two, the two other ones of multiplicity one), 
and four curved sheets (each one being of multiplicity two)
given by $\omega=\omega_{\pm,\pm}$.  
We denote by $\pi_{\pm,\pm}$ the associated eigenprojectors 
(which are therefore of rank 2). 
Remarking that $\pi_{\pm,\pm}(\bk)u=u$ if and only if
$$
\tte\cdot \bk=0,\qquad 
\ttb=-\frac{1}{\omega} \bk\wedge \tte, 
$$
and $\ttp^\sharp$ and $\ttq^\sharp$ are given in terms of $\tte$ by (\ref{relpor}), one readily finds that  
$\pi_{\pm,\pm}(\bk)$ can be written as a block matrix 
$$
\pi_{\pm,\pm}(\bk)=\frac{1}{N(k)^2}(P_{ij})_{1\leq i,j\leq 4}
\quad\mbox{ with }\quad
N(k)^2=\frac{k^2}{\omega(k)^2}+1+\gamma\frac{\omega(k)^2+\omega_0^2}{(\omega(k)^2-\omega_0^2)^2}
$$
(for the sake of clarity, we write $\omega(k)=\omega_{\pm,\pm}(k)$), and
 where the $3\times3$ blocks
$P_{1j}$ are given by
\begin{eqnarray*}
P_{11}=\frac{k^2}{\omega(k)}\Pi_{\bk^\perp},
& &
P_{12}=-\frac{1}{\omega(k)}\bk\wedge,\\
P_{13}=i\frac{\sqrt{\gamma}}{\omega(k)^2-\omega_0^2}\bk\wedge,
& &
P_{14}=\frac{\omega_0}{\omega(k)}\frac{\sqrt{\gamma}}{\omega(k)^2-\omega_0^2}\bk\wedge,\\
P_{22}=\Pi_{\bk^\perp},
& &
P_{23}=-i\frac{\omega(k)\sqrt{\gamma}}{\omega(k)^2-\omega_0^2}\Pi_{\bk^\perp},\\
P_{24}=-\frac{\omega_0\sqrt{\gamma}}{\omega(k)^2-\omega_0^2}\Pi_{\bk^\perp},
& &
P_{33}=\frac{\omega(k)^2\gamma}{(\omega(k)^2-\omega_0^2)^2}\Pi_{\bk^\perp},\\
P_{34}=-i\frac{\omega(k)\omega_0\gamma}{(\omega(k)^2-\omega_0^2)^2}\Pi_{\bk^\perp},
& &
P_{44}=\frac{\omega_0^2\gamma}{(\omega(k)^2-\omega_0^2)^2}\Pi_{\bk^\perp},
\end{eqnarray*}
where $\Pi_{\bk^\perp}$ is the orthogonal projector onto the orthogonal plane to $\bk$; the remaining blocks stem from
the symmetry relations $P_{ij}=P_{ji}^*$.

It follows from these computations that in the case of a cubic nonlinearity\footnote{The treatment of other
kinds of nonlinearities is absolutely similar} (i.e. $f=0$ in (\ref{NLMax})), $F^{env}(\eps,u)$ is given for all $u$ such that
$u=\pi_{\pm,\pm}(\bk)u$ by
$$
F^{env}(\eps,u)=\big(0,0,-\frac{\gamma^{5/2}}{(\omega^2-\omega_0^2)^3}[(\tte\cdot \tte)\bar \tte+2\abs{\tte}^2\tte],0\big)^T;
$$
similarly, 
$$
\pi_{\pm,\pm}(\bk)F^{env}(\eps,u)=\big(*,\frac{i}{N(k)^2}\frac{\gamma^{3}\omega}{(\omega^2-\omega_0^2)^4}[(\tte\cdot \tte)\bar \tte+2\abs{\tte}^2\tte],*,*\big)^T.
$$
and
$$
\pi_{\pm,\pm}A_0u=\big(*,\omega_1\frac{\gamma\omega^2}{(\omega^2-\omega_0^2)^2}\tte,*,*\big)^T.
$$

We can now write explicitly the general nonlinear Schr\"odinger equation (\ref{eqrad2})
derived in Example \ref{exrad2} in the particular case
of the Maxwell equations (\ref{MaxND}). Choosing $\omega_1(\cdot)=\omega_{\pm,\pm}(\cdot)$ and therefore
$\pi_1(\cdot)=\pi_{\pm,\pm}(\cdot)=\omega(\cdot)$, and remarking that the solution $v$ to (\ref{eqrad2}) 
remains polarized along $\pi_{\pm,\pm}(\bk)$ if it initially polarized, it is enough to give an equation on its electric field component ${\mathtt e}$
(the components $\ttb$, $\ttp^\sharp$ and $\ttq^\sharp$ being recovered as indicated above). Moreover, we can 
assume the $\bk=k{\bf e}_z$, so that the electric field $\tte$ only has transverse components $\fre\in \C^2$ (i.e. $\tte=(\fre^T,0)^T$) and (\ref{eqrad2}) finally reduces to
\begin{eqnarray}
\nonumber
\lefteqn{(1-i\eps {\bf b}\cdot \nabla -\eps^2\nabla\cdot  B\nabla)\partial_t \fre-\frac{i}{2}\big(\frac{\omega'(k)}{k}\Delta_\perp +\omega''(k)\dz^2\big) \fre}\\
\label{marredemarre}
& &+\eps^p\omega_1\frac{\gamma\omega(k)^2}{(\omega(k)^2-\omega_0^2)^2}\fre
=
 \frac{i}{N(k)^2}\frac{\gamma^{3}\omega(k)}{(\omega(k)^2-\omega_0^2)^4}[(\fre\cdot \fre)\bar \fre+2\abs{\fre}^2\fre].
\end{eqnarray}
Proceeding to the following rescaling of the space variables
$$
(x,y)=\big(\frac{\omega'(k)}{2k}\big)^{1/2}(x',y')
\quad\mbox{ and }\quad
z=\abs{\frac{1}{2}\omega''(k)}^{1/2}z'
$$
(the latter one only if $\omega''(k)\neq 0$), and
 rescaling $\fre$ as 
$$
{\mathfrak e}=\Big(\frac{1}{N(k)^2}\frac{\gamma^{3}\omega(k)}{3(\omega(k)^2-\omega_0^2)^4}\Big)^{1/2}\fre',
$$
the above equation becomes
\begin{equation}\label{AB}
iP_2(\eps\nabla)\dt\fre+\big(\Delta_\perp +\alpha_1\dz^2\big) \fre +i\alpha_2\fre
+\frac{1}{3}[(\fre\cdot \fre)\bar \fre+2\abs{\fre}^2\fre]=0
\end{equation}
where $\alpha_1=\mbox{sgn}(\omega''_{\pm,\pm}(k))\in\{0,\pm 1\}$, $\alpha_2=\eps^p\omega_1\frac{\gamma\omega(k)^2}{(\omega(k)^2-\omega_0^2)^2}$ and
$$
P_2(\eps\nabla)=1-i\eps M{\bf b}\cdot\nabla-\eps^2\nabla \cdot MBM\nabla,
$$
where 
$M=\mbox{diag}\Big(\big(\frac{\omega'(k)}{2k}\big)^{-1/2},\big(\frac{\omega'(k)}{2k}\big)^{-1/2},\abs{\frac{1}{2}\omega''(k)}^{-1/2}\Big)$. This equation is exactly under the form $\eqref{NLSfamily1}_{\rm vect}$ with $\boldsymbol{\alpha}_3=0$ (i.e. without the terms modeling the frequency dependence of the polarization).

\subsubsection{With frequency dependent polarization}

Let us now describe the modifications that have to be made in order to take into account a frequency dependent 
polarization, as in (\ref{eqrad3}). Since initial data polarized along $\pi_{\pm,\pm}(\bk)$ conserve this polarization
during the evolution in time, the only difference between (\ref{eqrad3}) and (\ref{eqrad2}) is the presence of
the terms $\eps\pi_{\pm,\pm}(\bk)\pi_{\pm,\pm}'(\bk)\cdot D F^{env}(\eps,u)$ and $-i\eps({\bf b}\cdot\nabla)\pi_{\pm,\pm}(\bf k)F^{env}(\eps,u)$. Only the first one requires a nontrivial computation. 
Thanks to the expression for $\pi_{\pm,\pm}(\bk)$ given  above, we can write for all $F=(0,0,f,0)$, 
$$
\pi_{\pm,\pm}(\bk)F=(ig_1(k)\bk\wedge f,-ig_2(k)\Pi_{\bk^\perp}f,g_3(k)\Pi_{\bk^\perp}f,ig_4(k)\Pi_{\bk^\perp}f)^T,
$$
with 
\begin{eqnarray*}
g_1(k)=\frac{1}{N(k)^2}\frac{\sqrt{\gamma}}{\omega(k)^2-\omega_0^2},& &
 g_2(k)=\omega(k)g_1(k),\\
g_3(k)=\frac{\omega(k)\sqrt{\gamma}}{\omega(k)^2-\omega_0^2}g_2(k), & &
g_4(k)=\frac{\omega_0\sqrt{\gamma}}{\omega(k)^2-\omega_0^2}g_2(k).
\end{eqnarray*}
One has therefore, with $h=\frac{\bk}{k} \cdot D f$ and $l=d_{\bk}(\bk'\mapsto \Pi_{\bk'\,^\perp})\cdot D f$,
\begin{eqnarray*}
\pi_{\pm,\pm}'(\bk)\cdot DF&=&(ig_1'(k)\bk\wedge h,-ig_2'(k)\Pi_{\bk^\perp} h,g_3'(k)\Pi_{\bk^\perp}h,ig_4'(k)\Pi_{\bk^\perp}h)^T\\
&+&(ig_1(k) D\wedge f,-ig_2(k)l,g_3(k)l,ig_4(k)l)^T . 
\end{eqnarray*}
If $f\cdot \bk=0$, we can use the identity $\Pi_{\bk^\perp}d_{\bk}(\bk'\mapsto \Pi_{\bk'\,^\perp})\cdot D\Pi_{\bk^\perp}=0$
to deduce that
$$
\pi_{\pm,\pm}(\bk)\pi_{\pm,\pm}'(\bk)\cdot DF=\big(*,-\frac{m(k)}{N(k)^2}\frac{\bk}{k}\cdot \nabla f,*,*\big)^T,
$$
where the $*$ can be easily computed, and where
$$
m(k)=\frac{k}{\omega(k)}(g_1(k)+kg_1'(k))+g_2'(k)+\frac{\sqrt{\gamma}}{\omega(k)^2-\omega_0^2}(\omega(k)g_3'(k)-\omega_0 g_4'(k)).
$$
The nonlinearity in the r.h.s. of (\ref{marredemarre}) must therefore be replaced by
$$
 \frac{i}{N(k)^2}\frac{\gamma^{3}\omega(k)}{(\omega(k)^2-\omega_0^2)^4}(1-i\eps {\bf b}\cdot\nabla+i\alpha_3\dz)[(\fre\cdot \fre)\bar \fre+2\abs{\fre}^2\fre],
$$
with $\alpha_3=-\frac{\omega^2-\omega_0^2}{\sqrt{\gamma}\omega(k)}$. The reduction to $\eqref{NLSfamily1}_{\rm vect}$ then follows the same steps as for (\ref{AB}).

\subsection{The case with charge and current density} \label{APPmg}

According to \eqref{NLSpbis} and more generally \eqref{eqpolarioniz},
ionizations effects are taken into account by adding
$$
I:=-\eps \pi_1(\bk) \left( i w C_1^T C_1 u 
+ c \, C_1^T G^{env}(C_1 u,w) \right) 
$$
to the right-hand side of the NLS equation, and by considering the
following equation for $w$,
$$
\dsp \dt w = 2 \eps G^{env}(C_1 u,w) \cdot \overline{C_1 u} .
$$
In the particular case of the Maxwell equations \eqref{Max4}, one has
$w=\rho$ and
$$
I=-\eps \pi_1({\bf k})\Big(0,i\rho \tte+c_0\big(c_1\abs{\tte}^{2K-2}+c_2\rho\big)\tte,0,0\Big)^T;
$$
consequently, one must add to the right-hand side of
\eqref{marredemarre} the ionization term
$$
-i\rho {\fre}-c_0\big(c_1\abs{\fre}^{2K-2}+c_2\rho\big)\fre , 
$$
so that after rescaling as for \eqref{AB}, the equation takes the form
\eqref{rg4} (when written in a fixed frame rather than the frame
moving at the group velocity).


\begin{thebibliography}{8.}

\bibitem{AL} Anderson, D., Lisak, M.: Nonlinear asymmetric self-phase modulation and self-steepening of pulses in long optical waveguides. Phys. Rev. A \textbf{27}, 1393--1398 (1983).

\bibitem{AnCaSp} Antonelli, P., Carles, R., Sparber, C.: On nonlinear Schr\"odinger type equations with nonlinear damping. 
To appear in Int. Math. Res. Not (2014). 

\bibitem{BergeSkupin} Berg\'e, L., Skupin, S.: Modeling ultrashort filaments of light. 
DCDS \textbf{23}(4), 1099--1139 (2009).

\bibitem{BergeSkupin2} L. Berg\'e, S. Skupin, R. Nuter, J. Kasparian, J.-P. Wolf: Ultrashort filaments of light 
in weakly ionized, optically transparent media. Rep. Prog. Phys. {\bf 70}, 1633--1713 (2007). 

\bibitem{Bloembergen} Bloembergen, N.: Nonlinear optics. Third printing, with addenda and corrections. Frontiers in Physics: A lecture note and reprint volume, Vol. 21. W. A. Benjamin, Inc. Advanced Book Program, Reading, Mass.-London-Amsterdam (1977). 

\bibitem{CW} Cazenave, T., Weissler, F. B.: The {C}auchy problem for the critical nonlinear {S}chr\"odinger equation 
in $H^s$. Nonlinear Anal. \textbf{14}(10), 807--836 (1990).  

\bibitem{CKSTT} Colliander, J., Keel, M., Staffilani, G., Takaoka, H., Tao, T.: Global well-posedness and scattering 
for the energy-critical nonlinear Schr\"odinger equation in $\R^3$. Ann. of Math. \textbf{167}, 767--865 (2008).

\bibitem{CGL} Colin, T, Gallice, G., Laurioux, K.: Intermediate models in nonlinear optics. 
SIAM J. Math. Anal. {\bf 36}, 1664--1688 (2005).

\bibitem{ColinLannes} Colin, M., Lannes, D.: Short pulses approximations in dispersive media. 
SIAM J. Math. Analysis \textbf{41}, 708--732 (2009).

\bibitem{Donnat} Donnat, P.: Quelques contributions math\'ematiques en optique nonlin\'eaire. 
Th\`ese. \'Ecole Polytechnique, Paris (1994).

\bibitem{DJMR} Donnat, P., Joly, J.-L., M\'etivier, G., Rauch, J.: Diffractive nonlinear geometric optics. I. 
In S\'eminaire Equations aux D\'eriv\'ees Partielles, Ecole Polytechnique, Palaiseau, 1995--1996,
Exp. XVII, pp. 1--23. \'Editions de l'Ecole Polytechnique, Palaiseau (1996).

\bibitem{F} Fibich, G.: Self-focusing in the damped nonlinear {S}chr\"odinger equation. 
SIAM J. Appl. Math. \textbf{61}(5), 1680--1705 (2001).

\bibitem{FP} Fibich, G., Papanicolaou, G., Self-focusing in the perturbed and unperturbed nonlinear Schr\"odinger equation in critical dimension, SIAM J. Appl. Math., \textbf{60}(1), 183--240 (1999). 

\bibitem{GS} Ghidaglia, J. M. and Saut, J. C.: Nonexistence of travelling wave solutions to nonelliptic nonlinear Schr\"odinger equations. J. Nonlinear Sci. \textbf{6}, 139--145 (1996).

\bibitem{GV} Ginibre, J., Velo, G.: On a class of nonlinear Schr\"odinger equations. I. The Cauchy problem, general case. 
J. Funct. Anal. \textbf{32}(1), 1--32 (1979). 

\bibitem{JMR} Joly, J.-L., M\'etivier, G., Rauch, J.: Diffractive nonlinear geometric optics with rectification. 
Indiana Univ. Math. J. \textbf{47}, 1167--1241 (1998).

\bibitem{Kalyakin} Kalyakin, L. A.: Asymptotic decay of a one-dimensional wave packet in a nonlinear dispersive medium. 
Math. USSR Sb. \textbf{60}, 457--483 (1988).
%
\bibitem{KNZ} Kevrekidis, P., Nahmod, A. R., Zeng, C.: Radial standing and selfsimilar waves for the hyperbolic cubic 
NLS in 2D. Nonlinearity \textbf{24}(5) 1523--1538 (2011).

\bibitem{KolesikMoloney} Kolesik, M., Moloney, J. V.: Nonlinear optical pulse propagation simulation: From maxwell's 
to unidirectional equations. Phys. Rev. E \textbf{70}, 036604 (2004).

\bibitem{Lannes} Lannes, D.: Dispersive effects for nonlinear geometrical optics with rectification. 
Asymp. Analysis \textbf{18}, 111--146 (1998).

\bibitem{LannesSP} Lannes, D.: High-frequency nonlinear optics: from the nonlinear Schr\"odinger approximation to ultrashort-pulses equations. Proc. Roy. Soc. Edinburgh Sect. A {\bf 141}, 253--286 (2011).

\bibitem{Lannes_book} Lannes, D.: The Water Waves Problem: Mathematical Analysis and Asymptotics. 
Mathematical Surveys and Monographs, Vol. 188. AMS, 2013.

\bibitem{Lorentz} Lorentz, H. A.: Theory of Electrons. Teubner, 
New York (1909), reprinted Dover, Englewood Cliffs, NJ (1952).

\bibitem{M1} Merle, F.: Sur la d\'ependance continue de la solution de l'\'equation de {S}chr\"odinger non lin\'eaire 
pr\`es du temps d'explosion. C. R. Acad. Sci. Paris S\'er. I Math. \textbf{304}(16), 479--482 1987.

\bibitem{M2} Merle, F.: Limit behavior of saturated approximations of nonlinear {S}chr\"odinger equation. 
Comm. Math. Phys. \textbf{149}(2), 377--414 (1992).

\bibitem{MR1} Merle, F., Rapha\"el, P.: Blow up dynamic and upper bound on the blow up rate for critical nonlinear 
Schr\"odinger equation. Ann. Math. \textbf{161}(1), 157--222 (2005).

\bibitem{MR2} Merle, F., Rapha\"el, P.: Sharp upper bound on the blow up rate for critical nonlinear Schr\"odinger 
equation, Geom. Funct. Anal. \textbf{13}, 591--642 (2003).

\bibitem{MR3} Merle, F., Rapha\"el, P.: On universality of blow up profile for $L^2$ critical nonlinear Schr\"odinger 
equation. Invent. Math. \textbf{156}, 565--672 (2004).

\bibitem{MR4} Merle, F., Rapha\"el, P.: Sharp lower bound on the blow up rate for critical nonlinear Schr\"odinger 
equation. J. Amer. Math. Soc. \textbf{19}(1), 37--90 (2006).

\bibitem{MR5} Merle, F., Rapha\"el, P.: Profiles and quantization of the blow up mass for critical nonlinear 
Schr\"odinger equation. Comm. Math. Phys. \textbf{253}(3), 675--704 (2005).

\bibitem{MR6} Merle, F., Rapha\"el, P.: On one blow up point solutions to the critical nonlinear Schr\"odinger equation. 
J. Hyperbolic Differ. Equ. \textbf{2}, 919--962 (2005).

\bibitem{MRS} Merle, F., Rapha\"el, P., Szeftel, J.: Stable self similar blow up dynamics for slightly $L^2$ 
supercritical NLS equations. Geom. Funct. Anal. \textbf{20}(4), 1028-1071 (2010).

\bibitem{OT} Ohta, M., Todorova, G.: Remarks on global existence and blowup for damped nonlinear Schr\"odinger equations.  
Discrete Contin. Dyn. Syst. \textbf{23}(4), 2009, 1313--1325.

\bibitem{Owyoung} Owyoung, A.: The origins of the nonlinear refractive indices of liquids and gases. 
Ph.D. dissertation. California Institute of Technology (1971).

\bibitem{Rothenberg} Rothenberg, J.E.: Space-time focusing: breakdown of the slowly varying envelope approximation 
in the self-focusing of femtosecond pulses. Optics Letters {\bf 17}, 1340--1342 (1992).

\bibitem{Schneider} Schneider, G.: Justification of modulation equations for hyperbolic systems via normal forms. 
Nonlin. Diff. Eqns Applic. \textbf{5}, 69--82 (1998).

\bibitem{Str} Strichartz, R. S.: Restrictions of Fourier transforms to quadratic surfaces and decay of solutions 
of wave equations. Duke Math. J. \textbf{44}(3), 705--714 (1977).

\bibitem{SS} Sulem, C., Sulem, P.L.: The nonlinear Schr\"odinger equation. Self-focusing and wave collapse. 
Applied Mathematical Sciences, Vol. 139. Springer-Verlag, New York (1999).

\bibitem{Texier} Texier, B.: Derivation of the Zakharov equations. 
Archive for Rational Mechanics and Analysis \textbf{184}(1), 121--183 (2007).

\bibitem{T} Tsutsumi, M.: Nonexistence of global solutions to the Cauchy problem for the damped nonlinear 
Schr\"odinger equations. SIAM J. Math. Anal. \textbf{15}, 357--366 (1984). 

\bibitem{TVZ} Tao, T., Visan, M., Zhang, X.: The nonlinear Schr\"odinger equation with combined power-type 
nonlinearities. Comm. PDE \textbf{32}, 1281-1343 (2007).

\bibitem{ZS} Zakharov, V.E., Shabat, A.B.: Exact theory of two-dimensional self-focusing 
and one-dimensional self-modulation of waves in non-linear media. 
Sov. Phys. JETP \textbf{34}, 62--69 (1972).

\end{thebibliography}
\end{document}